\title{Finite covers of graphs, their primitive homology,\\ and representation theory}
\author{Benson Farb and Sebastian Hensel \thanks{The first author
    gratefully acknowledges support from the National Science
    Foundation.} }
\theoremstyle{plain}
\newtheorem{theorem}{Theorem}[section]
\newtheorem*{theorem*}{Theorem}
\newtheorem{question}[theorem]{Question}
\newtheorem{proposition}[theorem]{Proposition}
\newtheorem*{proposition*}{Proposition}
\newtheorem{lemma}[theorem]{Lemma}
\newtheorem*{lemma*}{Lemma}
\newtheorem{observation}[theorem]{Observation}
\newtheorem{remark}[theorem]{Remark}
\newtheorem{corollary}[theorem]{Corollary}
\theoremstyle{definition}
\newtheorem{definition}[theorem]{Definition}
\newcommand{\nc}{\newcommand}
\nc{\dmo}{\DeclareMathOperator}
\nc{\Q}{\mathbb{Q}}
\nc{\R}{\mathbb{R}}
\nc{\Z}{\mathbb{Z}}
\nc{\C}{\mathbb{C}}
\nc{\cS}{\mathcal{S}}
\nc{\iso}{\cong}
\dmo{\Mod}{Mod}
\dmo{\Diff}{Diff}
\dmo{\Homeo}{Homeo}
\dmo{\dist}{dist}
\dmo\BDiff{BDiff}
\dmo\SO{SO}
\dmo\slide{sl}
\dmo\im{im}
\dmo\Irr{Irr}
\dmo\Irrpr{Irr^{pr}}
\dmo\Irrscc{Irr^{scc}}
\dmo\Fix{Fix}
\dmo\Out{Out}
\dmo\Aut{Aut}
\dmo\rank{rank}
\dmo\Res{Res}
\renewcommand{\epsilon}{\varepsilon}
\nc{\coloneq}{\mathrel{\mathop:}\mkern-1.2mu=}
\nc{\margin}[1]{\marginpar{\scriptsize #1}}
\nc{\para}[1]{\bigskip\noindent\textbf{#1}}
\begin{document}
\maketitle
\begin{abstract}
Consider a finite, regular cover $Y\to X$ of finite graphs, with associated deck group $G$.  
We relate the topology of the cover to the structure of $H_1(Y;\C)$ as a $G$-representation.  A central object in this study is the {\em primitive homology} group $H_1^{\mathrm{prim}}(Y;\C)\subseteq 
H_1(Y;\C)$, which is the span of homology classes represented by components of lifts of primitive elements of $\pi_1(X)$.  This circle of ideas relates combinatorial group theory, surface topology, and representation theory.
\end{abstract}
{\small\tableofcontents}

\section{Introduction}
Consider a regular covering $f:Y\to X$ of finite graphs, with
associated deck group $G$.  The goal of this article is to better understand $H_1(Y)$ as a $G$-representation.  

The action of $G$ on $Y$ by homeomorphisms endows 
$H_1(Y;\C)$ with the structure of a $G$-representation.  
Gasch\"{u}tz (see \cite{GLLM}) observed that the Chevalley-Weil formula \cite{CW} for surfaces has the following analogue in this context:  there is an isomorphism of $G$-representations

\begin{equation}
\label{eq:cw:graphs}
H_1(Y;\C) \iso \C[G]^{n-1}\oplus \C
\end{equation}
where $\C[G]$ denotes the regular representation and $n=\rank(\pi_1(X))$ is the rank of the free group $\pi_1(X)$.

The isomorphism \eqref{eq:cw:graphs} hints at a broader dictionary
between representation-theoretic information (on the right-hand side)
and topological information (on the left-hand side).  One of the goals of this paper is to begin an exploration of this dictionary. 

\para{Primitive homology of $G$-covers. }
Of course $H_1(Y;\C)$ is spanned by a finite set of closed loops.  But what if we demand that these loops project under $f$ to be {\em primitive} in $\pi_1(X)$; that is, part of a free basis of the free group $\pi_1(X)$?  Fixing a cover $f:Y\to X$ we define, following 
Boggi-Looijenga \cite{BL} in the surface case (see \S~\ref{sec:surfaces}), the 
{\em primitive homology} of $Y$ to be:
\[H_1^{\mathrm{prim}}(Y;\C):=\text{$\C$-span}\{[\gamma]\in H_1(Y;\C): \text{$f(\gamma)\in \pi_1(X)$ is primitive}\}\]

By construction, $H_1^{\mathrm{prim}}(Y;\C)$ is a
$G$-subrepresentation of $H_1(Y;\C)$.  The main question we consider in this paper is the following, which we view as a fundamental question about finite covers of finite graphs. 

\begin{question}[{\bf Determining primitive homology}]
\label{question:subrep1}
What is the $G$-subrepresentation $H_1^{\mathrm{prim}}(Y;\C)\subseteq H_1(Y;\C)$?  In particular, is $H_1^{\mathrm{prim}}(Y;\C)=H_1(Y;\C)$ for every normal cover $f:Y\to X$?
\end{question}

In order to state Question~\ref{question:subrep1} more precisely, we let \[\Irr(G):=\{V_0:=\C_{triv},
V_1,\ldots ,V_r\}\] be the set of (isomorphism classes of) complex, irreducible
$G$-representations. In this notation, the Chevalley-Weil formula \eqref{eq:cw:graphs} can be
restated as:

\[H_1(Y;\C)\cong \bigoplus_{V_i\in\Irr(G)}V_i^{(n-1)\dim(V_i)}\oplus\C_{triv}.\]
Since $H_1^{\mathrm{prim}}(Y;\C)$ is a $G$-subrepresentation of
$H^1(Y;\C)$, Question~\ref{question:subrep1} is equivalent to the following.

\begin{question}[{\bf Which irreps occur?}]
\label{question:which1}
Which irreducible $G$-representations $V_i$ occur in
$H_1^{\mathrm{prim}}(Y;\C)$? What are their multiplicities? 
\end{question}

In order to describe our progress on answering
Question~\ref{question:which1} we make the following definition.  Note
that the data of a normal $G$-cover $f:Y\to X$ is equivalent to a
surjective homomorphism $\phi:\pi_1(X)=F_n\to G$.  The following can
therefore be viewed as a purely group-theoretic definition.

\begin{definition}[{\boldmath$\Irrpr(\phi, G)$}]
  Let $G$ be a finite group and let $\phi: F_n\to G$ be a homomorphism.
  Let $\Irrpr(\phi, G)\subset\Irr(G)$ denote the set of irreducible representations $V$ of
  $G$ with the property that $\phi(\gamma)(v)=v$ for some primitive
  element $\gamma\in F_n$ and some nonzero $v\in V$.
\end{definition}

%Note that $\Irrpr(\phi,G)$ always contains the trivial representation $\C_{triv}$.
%\[\Irrpr(\rho:F_n\to G):=\{V\in\Irr(G): \exists \ \text{primitive $g\in F_n$ so that $\rho(g)$ 
  %fixes some $v\neq 0\in V$}\}\]

Our first main result gives a restriction on $H_1^{\mathrm{prim}}(Y;\C)$ in terms of the purely algebraic data of $\Irrpr(\rho,G)$.

\begin{theorem}[{\bf Restricting primitive homology}]
\label{theorem:restriction}
Let $f:Y\to X$ be a normal $G$-covering of finite graphs defined by $\phi:F_n\to G$, where $G$ is
a finite group and $\rank(\pi_1(X))\geq 2$.  Then
\[ H_1^{\mathrm{prim}}(Y;\C) \subseteq \bigoplus_{V_i\in \Irrpr(\phi,G)}
V_i^{(n-1)\cdot\dim(V_i)}\oplus \C_{triv}\]
\end{theorem}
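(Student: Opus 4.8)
The plan is to show that every irreducible $G$-representation occurring in $H_1^{\mathrm{prim}}(Y;\C)$ lies in $\Irrpr(\phi,G)$; once this is established, the multiplicity bounds and the trivial summand come for free. Indeed, $H_1^{\mathrm{prim}}(Y;\C)$ is a $G$-subrepresentation of $H_1(Y;\C)$, and the Chevalley--Weil formula \eqref{eq:cw:graphs} already pins down the multiplicity of each nontrivial $V_i$ in $H_1(Y;\C)$ to be exactly $(n-1)\dim(V_i)$ (and $n=(n-1)+1$ for $V_0=\C_{triv}$, which always belongs to $\Irrpr(\phi,G)$ since every group element fixes $\C_{triv}$). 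So the multiplicities are inherited automatically, and only the support statement carries content. By definition $H_1^{\mathrm{prim}}(Y;\C)$ is spanned by the homology classes of the components of the preimages $f^{-1}(\gamma)$ as $\gamma$ ranges over primitive elements of $F_n=\pi_1(X)$; hence it is the sum, over all such $\gamma$, of the $G$-submodules $M_\gamma\subseteq H_1(Y;\C)$ generated by the component classes of $f^{-1}(\gamma)$. It therefore suffices to show that each $M_\gamma$ involves only irreducibles lying in $\Irrpr(\phi,G)$.

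Fix a primitive $\gamma$ and set $g:=\phi(\gamma)\in G$. The first step is a geometric analysis of $f^{-1}(\gamma)$. Realizing $\gamma$ as an immersed loop in $X$ and identifying the fiber over the basepoint with $G$, the monodromy of $\gamma$ permutes the fiber by multiplication by $g$, so the components of $f^{-1}(\gamma)$ form a $G$-set isomorphic to $G/\langle g\rangle$, on which $G$ acts transitively with the stabilizer of the base component $C_0$ equal to $\langle g\rangle$. The key point is that this stabilizer fixes the homology class $[C_0]$: restricted to $C_0$, the map $f$ is the connected cyclic cover $C_0\to\gamma$ of degree $\operatorname{ord}(g)$, and the deck transformations in $\langle g\rangle$ act on the circle $C_0$ as orientation-preserving rotations, hence fix $[C_0]\in H_1(Y;\C)$. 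Thus $\C\cdot[C_0]$ is a trivial $\langle g\rangle$-subrepresentation of $H_1(Y;\C)$.

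The representation-theoretic step is then immediate. Since the components of $f^{-1}(\gamma)$ form a single $G$-orbit, their classes span the cyclic $\C[G]$-module $M_\gamma$ generated by $[C_0]$; and because $[C_0]$ is fixed by $\langle g\rangle$, the universal property of induction exhibits $M_\gamma$ as a quotient of $\mathrm{Ind}_{\langle g\rangle}^{G}\C_{triv}$. By Frobenius reciprocity, the irreducible constituents of $\mathrm{Ind}_{\langle g\rangle}^{G}\C_{triv}$ are exactly those $V_i$ with $(\Res_{\langle g\rangle}V_i)^{\langle g\rangle}\neq 0$, i.e. those $V_i$ possessing a nonzero vector fixed by $g=\phi(\gamma)$. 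Every such $V_i$ lies in $\Irrpr(\phi,G)$ by definition, with $\gamma$ serving as the required primitive element. Summing over all primitive $\gamma$ therefore keeps us inside $\bigoplus_{V_i\in\Irrpr(\phi,G)}(\text{the $V_i$-isotypic part})$, which combined with the Chevalley--Weil multiplicities yields the claimed containment.

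I expect the main obstacle to be the geometric bookkeeping of the second paragraph: fixing basepoint and monodromy conventions carefully enough that the stabilizer of $C_0$ is genuinely $\langle g\rangle$, and verifying rigorously that the restricted cover $C_0\to\gamma$ is connected of degree $\operatorname{ord}(g)$ with orientation-preserving deck action, so that $[C_0]$ is truly fixed and not merely sent into its own span. The representation theory, by contrast, is formal once the trivial $\langle g\rangle$-line $\C\cdot[C_0]$ has been correctly identified.
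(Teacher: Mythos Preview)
Your proof is correct, and it takes a more elementary route than the paper's. The paper first proves the stronger Proposition~\ref{prop:rep-lift-of-prim}, namely that the span $M_\gamma$ of the elevations of a single primitive $\gamma$ is \emph{isomorphic} to $\mathrm{Ind}_{\langle g_\gamma\rangle}^G\C_{\mathrm{triv}}$; this requires showing that the elevation classes are linearly independent, which the paper accomplishes by embedding the rose as the spine of a punctured sphere and arguing with surfaces and boundary components. You, by contrast, only assert that $M_\gamma$ is a \emph{quotient} of $\mathrm{Ind}_{\langle g\rangle}^G\C_{\mathrm{triv}}$, which follows immediately from the universal property of induction once you have checked (as you do, via the orientation-preserving rotation observation) that $\langle g\rangle$ fixes $[C_0]$. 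Since quotients have no new irreducible constituents, this weaker statement is already enough for Theorem~\ref{theorem:restriction}, and so you bypass the surface-topological linear-independence argument entirely. What the paper's approach buys in exchange is Proposition~\ref{prop:rep-lift-of-prim} itself: an exact identification of the $G$-module contributed by a single primitive, which the authors frame as an entry in their dictionary and which carries precise multiplicity information beyond the mere support statement needed for the theorem.
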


The proof of Theorem~\ref{theorem:restriction} is given in
\S\ref{sec:primitive} below and it uses surface topology.  After
finishing this work, we learned that Malestein and Putman
independently discovered this obstruction as well.

\smallskip
One can also obtain finer information about the structure of
$H_1^{\mathrm{prim}}(Y;\C)$ by studying the primitive homology of
intermediate (nonregular) covers; see \S\ref{sec:primitive} for
details. 
Theorem~\ref{theorem:restriction} suggests a strategy to search for
examples with $H_1^{\mathrm{prim}}(Y;\C) \neq H^1(Y;\C)$ via an
algebraic question about $G$-representations.

\begin{question}
\label{question:equals}
Given a group $G$, does $\Irrpr(\phi, G)=\Irr(G)$ for every surjective homomorphism $\phi:F_n\twoheadrightarrow G$?
\end{question}
We attack Question~\ref{question:equals} from various angles, by
restricting the class of groups we consider.  In \S\ref{sec:abelian} we answer Question~\ref{question:equals} in the
affirmative for $G$ abelian or $2$--step nilpotent. One might thus 
expect that $H_1^{\mathrm{prim}}(Y;\C) = H^1(Y;\C)$ in these cases.  We can prove this in 
many instances.

\begin{theorem}[{\bf Abelian and \boldmath$2$-step nilpotent covers}]
\label{theorem:nilpotent}
Let $Y\to X$ be a finite normal cover with deck group $G$ defined by $\phi:F_n\to G$.  
Assume that $\rank(\pi_1(X))\geq 3$.   Suppose that $G$ is either abelian or $2$-step nilpotent.  Then $\Irrpr(\phi, G)=\Irr(G)$.

In the case of a nonabelian $G$, assume further that $n$ is odd and every subgroup of the center of $G$ has prime order. Then 
\[ H_1^{\mathrm{prim}}(Y;\C)=H^1(Y;\C).\] 
\end{theorem}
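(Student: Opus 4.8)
The plan is to prove Theorem~\ref{theorem:nilpotent} in two parts. For the first part (that $\Irrpr(\phi,G)=\Irr(G)$ for abelian or 2-step nilpotent $G$), I need to show that every irreducible $G$-representation $V$ admits a nonzero fixed vector for $\phi(\gamma)$ for some primitive $\gamma\in F_n$. The key observation is that $\phi(\gamma)$ ranges over a large subset of $G$ as $\gamma$ ranges over primitive elements: since $n\geq 2$ (in fact $\geq 3$ here) and $\phi$ is surjective, the images $\phi(\gamma)$ of primitive elements should hit every element of $G$, or at least a generating/conjugacy-rich set. Concretely, if $x_1,\dots,x_n$ is a free basis, then for any $g=\phi(x_1)$ and any word $w$ in the other generators, the element $x_1 w$ is primitive, so $\phi(x_1)\phi(w)$ is realized; varying $w$ lets $\phi(x_1 w)$ take any value in the coset $\phi(x_1)\langle \phi(x_2),\dots,\phi(x_n)\rangle$. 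The first step is thus to establish a lemma: for $n\geq 2$ and $\phi$ surjective, $\{\phi(\gamma):\gamma \text{ primitive}\}=G$ (or at least meets every conjugacy class). Given this, for any irreducible $V$ I pick an element $g$ with a nontrivial fixed space in $V$ — for abelian $G$ every $V$ is one-dimensional and one simply needs some primitive $\gamma$ with $\phi(\gamma)\in\ker(V)$, which holds since the kernel is an index-$p$ (or finite-index) subgroup and primitive images cover $G$. For 2-step nilpotent $G$ the analysis is similar but uses that each irreducible $V$ has the commutator subgroup acting by a character and that some group element must act with eigenvalue $1$.

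For the second part, the goal strengthens from $\Irrpr=\Irr$ to the actual equality $H_1^{\mathrm{prim}}(Y;\C)=H^1(Y;\C)$, under the extra hypotheses that $n$ is odd and every subgroup of $Z(G)$ has prime order. The difficulty is that Theorem~\ref{theorem:restriction} only gives an upper bound on $H_1^{\mathrm{prim}}$; equality of the two sides requires a lower bound showing every irreducible actually appears with full multiplicity $(n-1)\dim V_i$. My plan is to produce enough explicit primitive cycles in $Y$ whose homology classes span. The natural approach is to exhibit, for each free basis element or each primitive $\gamma$, the $G$-orbit of the homology class of a lift, and show these orbits together generate a copy of $V_i^{(n-1)\dim V_i}$ inside $H_1^{\mathrm{prim}}$ for each $i$. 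Since $H_1^{\mathrm{prim}}$ is a subrepresentation and we know its allowed isotypic components from the first part, it suffices to check that the multiplicity of each $V_i$ in $H_1^{\mathrm{prim}}$ equals the full multiplicity in $H_1(Y)$ rather than something smaller.

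The concrete mechanism I would use is a dimension/counting argument. The lifts of a single primitive element $\gamma$ split into $[G:\langle\phi(\gamma)\rangle]$ components, and the $\C$-span of their classes is an induced representation $\mathrm{Ind}_{\langle\phi(\gamma)\rangle}^G \C$; by Frobenius reciprocity this contains $V_i$ with multiplicity equal to $\dim V_i^{\langle\phi(\gamma)\rangle}$, the dimension of the $\phi(\gamma)$-fixed subspace. The hypotheses that $n$ is odd and that subgroups of $Z(G)$ have prime order are exactly what I expect to be needed to guarantee these fixed subspaces are large enough — the prime-order condition forces each noncentral or central element to act either trivially or with no fixed vectors in a controlled way, and the parity condition on $n$ governs the multiplicity bookkeeping so that summing the contributions of a well-chosen family of primitive elements reaches the full $(n-1)\dim V_i$. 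The main obstacle will be this final step: upgrading the existence of \emph{some} fixed vector (which gives only that $V_i$ appears) to the statement that the fixed vectors coming from primitive elements span the entire isotypic component with the correct multiplicity. I expect this to require carefully choosing a spanning family of primitive elements — for instance, for each pair of basis indices $i\neq j$ the elements $x_i^k x_j$ — and then invoking the prime-order hypothesis to compute character sums $\sum_\gamma \chi_{V_i}(\phi(\gamma))$ and show they force full multiplicity, with the parity of $n$ ensuring the relevant sums do not degenerate.
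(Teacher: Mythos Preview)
Your proposal has genuine gaps in both parts.

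\textbf{Part 1.} The lemma you propose --- that $\{\phi(\gamma): \gamma\ \text{primitive}\} = G$ for surjective $\phi$ --- is false: for the standard map $F_n \to (\Z/2)^n$ the identity is never hit, since a primitive element has abelianization with $\gcd$ equal to $1$, hence nonzero image mod $2$. For abelian $G$ your argument is salvageable (one only needs primitive images to meet $\ker V$ for each character $V$; since $G/\ker V$ is cyclic, the Euclidean algorithm on two basis images does this, and that is exactly the paper's argument). But for $2$-step nilpotent $G$ your sketch breaks: take $G = Q_8$ and its $2$-dimensional irreducible $V$; no nonidentity element of $Q_8$ has eigenvalue $1$ on $V$, so ``some group element acts with eigenvalue $1$'' already forces you to produce a primitive in $\ker\phi$, which your outline does not address. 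The paper's argument is different and genuinely uses $n\geq 3$: with a basis $\{a,b,c\}$, one runs the Euclidean algorithm on the scalars by which the \emph{central} elements $\phi([a,c]),\phi([b,c])$ act on an irreducible subrepresentation for the center, performing Nielsen moves on $a,b$ until $\phi([a,b])$ fixes a nonzero vector; on its fixed subspace $V_0$ the images $\phi(a),\phi(b)$ then commute, and the abelian case applied to $V_0$ produces the required primitive fixed vector.

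\textbf{Part 2.} The paper does not attempt any multiplicity or character-sum computation. Instead, under the extra hypotheses it shows (Proposition~\ref{prop:odd-nilp-strong-irrpr}) that each irreducible factors through a quotient for which the induced map from $F_n$ has a primitive in its kernel, after which Lemma~\ref{lem:regular-intermediate} gives $H_1^{\mathrm{prim}} = H_1$ in one line. The mechanism: pass via Lemma~\ref{lem:cyclic center} to a quotient with cyclic center $Z$ of prime order; use that $n$ is odd to perform a symplectic-style reduction on the pairing $(a_i,a_j)\mapsto \phi([a_i,a_j])\in Z$ until some basis element $a_l$ satisfies $\phi(a_l)\in Z=[G,G]$; then multiply $a_l$ by a commutator word to kill its image, noting (in the universal $2$-step nilpotent quotient of $F_n$) that this preserves primitivity. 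The oddness of $n$ is precisely what guarantees a leftover basis element after pairing off; the prime-order hypothesis is what forces $Z=[G,G]$ so the correction step succeeds. Your plan, by contrast, never explains how to control the overlaps between the spans of lifts of different primitives inside $H_1(Y)$: Frobenius reciprocity tells you the multiplicity of $V_i$ in the span of \emph{one} primitive's lifts, but says nothing about how these subspaces sit relative to one another across the $(n-1)\dim V_i$ copies of $V_i$ in $H_1(Y)$. The obstacle you yourself flag is real, and the paper sidesteps it entirely rather than computing through it.
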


The assumption $\rank(\pi_1(X))\geq 3$ is in general necessary, even for reasonably simple $G$.  
In  \S\ref{sec:rank-2-examples} we give a number of different examples that imply the following.

\begin{theorem}[{\bf Rank two counterexamples}]
\label{theorem:counter}
Let $X$ be a wedge of two circles. There exist finite $2$--step nilpotent groups $G$ and $G$-covers $Y\to X$ given by surjections $\phi:\pi_1(X)=F_2\to G$ so that $\Irrpr(\phi, G)\subsetneq\Irr(G)$ and $H_1^{\mathrm{prim}}(Y;\C) \subsetneq H^1(Y;\C)$.
\end{theorem}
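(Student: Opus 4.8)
The plan is to exhibit the quaternion group $G=Q_8$ as an explicit counterexample, via the surjection $\phi:F_2=\langle A,B\rangle\to Q_8$ given by $\phi(A)=i$, $\phi(B)=j$. The group $Q_8$ is nilpotent of class two, with $[Q_8,Q_8]=Z(Q_8)=\{\pm1\}$ and $Q_8^{ab}=(\Z/2)^2$, so it meets the hypotheses. Its irreducible representations are four characters factoring through $Q_8^{ab}$ together with a single $2$-dimensional irreducible $V$, realized by the inclusion $Q_8\subset SU(2)$ of unit quaternions. The feature of $V$ I will exploit is that it is \emph{fixed-point-free}: for every $g\neq 1$ the operator $\rho_V(g)$ has no nonzero fixed vector. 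Concretely, $-1$ acts as $-I$, while each of $\pm i,\pm j,\pm k$ acts by a trace-zero, determinant-one matrix with eigenvalues $\pm\sqrt{-1}$; in no case is $1$ an eigenvalue. (This is the representation-theoretic shadow of the classical free action of $Q_8$ on $S^3$.)

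First I would record the elementary fact that every primitive $\gamma\in F_2$ has non-central image under $\phi$. Abelianization sends a primitive element to a primitive vector $(P,Q)\in\Z^2$, i.e. one with $\gcd(P,Q)=1$; hence $(P,Q)\not\equiv(0,0)\bmod 2$, so the image of $\phi(\gamma)$ in $Q_8^{ab}$ is nontrivial and $\phi(\gamma)\notin[Q_8,Q_8]=Z(Q_8)$. In particular $\phi(\gamma)\neq 1$ for every primitive $\gamma$.

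Combining the two observations gives $V\notin\Irrpr(\phi,Q_8)$ at once: if some primitive $\gamma$ and nonzero $v\in V$ satisfied $\phi(\gamma)v=v$, then $\rho_V(\phi(\gamma))$ would have eigenvalue $1$ while $\phi(\gamma)\neq 1$, contradicting fixed-point-freeness. Hence $\Irrpr(\phi,Q_8)\subsetneq\Irr(Q_8)$. To upgrade this to the homological statement I would invoke Theorem~\ref{theorem:restriction}: since $V\notin\Irrpr(\phi,Q_8)$, the $V$-isotypic component of $H_1^{\mathrm{prim}}(Y;\C)$ vanishes. On the other hand the Chevalley--Weil formula \eqref{eq:cw:graphs} with $n=\rank(\pi_1(X))=2$ shows that $V$ occurs in $H_1(Y;\C)$ with multiplicity $(n-1)\dim V=2>0$. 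Therefore $H_1^{\mathrm{prim}}(Y;\C)\subsetneq H_1(Y;\C)$, as desired.

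The only genuinely delicate point is the \emph{choice} of group: what makes $Q_8$ work is that it is among the rare groups admitting a fixed-point-free representation, so that the triviality ``primitive images avoid the center'' can be leveraged against all of $V$. I expect the main obstacle in producing a broader family to be exactly this scarcity: one must locate finite $2$-step nilpotent, $2$-generated groups carrying a representation $V$ in which no image of a primitive acts with eigenvalue $1$. Beyond $Q_8$ one cannot rely on fixed-point-freeness alone, and must instead determine which conjugacy classes actually arise as $\{\phi(\gamma):\gamma\text{ primitive}\}$. This set is a union of conjugacy classes whose image in $G^{ab}$ consists of the reductions of primitive integer vectors, and (conjugating in $F_2$) its commutator coordinate sweeps out all of $[G,G]$; so for each admissible direction one is forced to check eigenvalues over the entire central fiber, and the examples come from arranging a representation whose ``direction operators'' have eigenvalues avoiding the relevant roots of unity. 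Carrying out that analysis for suitable $2$-step nilpotent groups yields the remaining examples, but $Q_8$ already suffices for the existence claim.
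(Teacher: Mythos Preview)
Your argument is correct, and in fact cleaner than the paper's. You exploit two features of $Q_8$ simultaneously: it is $2$-step nilpotent with $Q_8^{\mathrm{ab}}=(\Z/2)^2$, so the abelianization obstruction forces $\phi(\gamma)\neq 1$ for every primitive $\gamma$; and its $2$-dimensional irreducible is fixed-point-free (equivalently, $Q_8$ acts freely on $S^3$), so $\phi(\gamma)\neq 1$ immediately gives $V\notin\Irrpr(\phi,Q_8)$. The appeal to Theorem~\ref{theorem:restriction} and Chevalley--Weil then finishes the job exactly as you say.

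The paper takes a different route. Its $2$-step nilpotent example (Proposition~\ref{prop:examples-primitive2}) is a group $\Gamma$ of order $32$ sitting in $1\to\Z/2\to\Gamma\to(\Z/4)^2\to 1$, together with an explicit $2$-dimensional representation $\rho$. Crucially, $\rho$ is \emph{not} fixed-point-free---there are nontrivial group elements acting as the identity---so the paper cannot use your shortcut and instead invokes the algorithm of \S\ref{sec:algorithms} (a computer-assisted search over Nielsen orbits) to verify that no image of a primitive has eigenvalue $1$. What the paper gains from this more laborious example is the extra conclusion that the $\Irrpr$ obstruction is strictly stronger than ``$G$ acts freely and linearly on a sphere via $\rho$''; your $Q_8$ example, being a spherical space form group, does not separate these two phenomena. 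For the bare statement of Theorem~\ref{theorem:counter}, however, your $Q_8$ argument is both shorter and entirely computer-free.
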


\para{Primitives in the kernel.} If $\phi:F_n\to G$ has any primitive
element in the kernel, then trivially $\Irrpr(\phi, G) = \Irr(G)$ (and
in fact, primitive homology is all of homology for the associated
cover).  Grunewald and Lubotzky \cite{GL} study covers induced by such
maps and call them \emph{redundant}.

To find interesting examples in light of
Question~\ref{question:equals}, we are thus lead to a search for
$\phi$ without primitive elements in the kernel.

We discuss the ``no primitives in the kernel'' condition in \S\ref{sec:kernelprims}, and relate it in 
\S\ref{sec:product-replacement} to the Product Replacement Algorithm.  We 
connect this property to those discussed above, as follows : for a finite normal cover $Y\to X$ given by a surjection $\phi:\pi_1(X)=F_n\to G$ :

\bigskip
\noindent
\fbox{$\ker(\phi) \ \text{contains a primitive in $F_n$}$} $\Longrightarrow$\  \fbox{$H_1^{\mathrm{prim}}(Y;\C) = H^1(Y;\C)$}\  $\Longrightarrow$\fbox{$\Irrpr(\phi, G)=\Irr(G)$} 

\bigskip
The first implication is Lemma~\ref{lem:regular-intermediate} below; the second is Theorem~\ref{theorem:restriction}.  

In general, the property that $\ker(\phi)$ does not contain any
primitive element is far from sufficient to imply
$\Irrpr(\phi, G)\neq \Irr(G)$; for example, the standard mod-$2$ homology cover has
the former property, but not the latter.
In contrast, the following is easy to show from the definitions.
\begin{observation}\label{obs:freely-acting}
  Suppose that $G$ is a finite group which acts freely and linearly on
  a sphere, and let $\phi:F_n\to G$ be any homomorphism. Then
  $\Irrpr(\phi, G)=\Irr(G)$ if and only if $\ker(\phi)$ contains a
  primitive element.
\end{observation}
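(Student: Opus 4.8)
The plan is to prove the two implications separately, noting that the backward direction is the trivial one already recorded in the text: if $\gamma\in F_n$ is primitive and lies in $\ker(\phi)$, then $\phi(\gamma)=e$ acts as the identity on every representation $V$, so every $V\in\Irr(G)$ fixes a nonzero vector and hence $\Irrpr(\phi,G)=\Irr(G)$. All the content is therefore in the forward direction, and the hypothesis on $G$ will be used there in an essential way.

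For the forward direction, I would first unwind what ``$G$ acts freely and linearly on a sphere'' buys us representation-theoretically. Such an action is given by a real orthogonal representation $\rho$ in which every nontrivial $g\in G$ acts without nonzero fixed vector; complexifying (absence of a real fixed vector passes to the complexification since $\rho(g)$ is a real matrix), we obtain a complex representation $V$ that is \emph{fixed-point free}, meaning $1$ is not an eigenvalue of $\rho_V(g)$ for any $g\neq e$. Decomposing $V=\bigoplus_i W_i$ into irreducibles, the eigenvalues of each $\rho_{W_i}(g)$ form a subset of those of $\rho_V(g)$, so every irreducible summand $W_i$ is itself fixed-point free. The key point I would extract is: \emph{there exists a fixed-point-free irreducible representation $W\in\Irr(G)$}, i.e.\ one for which $\rho_W(g)$ fixes a nonzero vector only when $g=e$.

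Given this $W$, the conclusion is immediate from the definition of $\Irrpr$. Assuming $\Irrpr(\phi,G)=\Irr(G)$, we have $W\in\Irrpr(\phi,G)$, so there is a primitive $\gamma\in F_n$ and a nonzero $w\in W$ with $\phi(\gamma)w=w$. Since $W$ is fixed-point free, a nonzero fixed vector for $\rho_W(\phi(\gamma))$ forces $\phi(\gamma)=e$; that is, the primitive element $\gamma$ lies in $\ker(\phi)$, as desired. I do not expect a genuine obstacle here: the entire argument rests on producing a single fixed-point-free irreducible $W$, which is exactly the representation-theoretic reformulation of the free linear sphere action, and the rest is a one-line application of the definitions. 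The only subtlety worth stating carefully is the passage from a real fixed-point-free action to a complex fixed-point-free \emph{irreducible} summand, so that $W$ is simultaneously irreducible (to lie in $\Irr(G)$) and still has no nontrivial fixed vectors.
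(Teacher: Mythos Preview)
Your proposal is correct and matches the paper's intended argument: the paper simply says the observation is ``easy to show from the definitions'' and, in the proof of the analogous lemma in \S\ref{sec:free-actions}, uses exactly the ingredient you isolate --- the existence of an irreducible representation in which no nontrivial element fixes a nonzero vector. Your write-up just makes explicit the passage from a free linear sphere action to a fixed-point-free complex irreducible summand, which the paper leaves implicit.
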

At first glance, this makes the class of groups which act freely and
linearly on spheres a likely candidate to answer
Question~\ref{question:equals} in the negative.
However, in Section~\ref{sec:free-actions} we prove the following, which 
shows that in general this strategy does not work.

\begin{theorem}[{\bf Groups acting freely on spheres}]
\label{theorem:spheres}
  There is a number $B\geq3$ with the following property. Suppose that $G$ is a finite group 
  that acts freely and linearly on an odd-dimensional sphere.  Then for all $n\geq B$ and all surjective homomorphisms 
  $\phi:F_n \to G$, the kernel of $\phi$ contains a primitive element of $F_n$.  Thus 
  $\Irr(G)=\Irrpr(\phi,G)$ for all $\phi$ and $H_1^{\mathrm{prim}}(Y;\C)=H^1(Y;\C)$ for every such $G$-cover.
\end{theorem}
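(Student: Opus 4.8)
The plan is to isolate the purely group-theoretic content. By Observation~\ref{obs:freely-acting}, since $G$ acts freely and linearly on a sphere, the equality $\Irr(G)=\Irrpr(\phi,G)$ holds precisely when $\ker(\phi)$ contains a primitive element; and by Lemma~\ref{lem:regular-intermediate} a primitive element in $\ker(\phi)$ already forces $H_1^{\mathrm{prim}}(Y;\C)=H^1(Y;\C)$. Thus both conclusions follow from the single assertion: there is a universal $B$ such that for every group $G$ acting freely and linearly on an odd sphere and every surjection $\phi:F_n\to G$ with $n\geq B$, the kernel contains a primitive element. First I would translate this into the language of generating tuples. Writing $g_i=\phi(x_i)$ for a fixed basis $x_1,\dots,x_n$ of $F_n$, the tuple $(g_1,\dots,g_n)$ generates $G$; and since the primitive elements of $F_n$ are exactly the $\Aut(F_n)$-orbit of $x_1$, while the induced action of $\Aut(F_n)$ on $G^n$ is generated by elementary Nielsen transformations, $\ker(\phi)$ contains a primitive element if and only if $(g_1,\dots,g_n)$ is Nielsen equivalent to a \emph{redundant} tuple $(h_1,\dots,h_{n-1},1)$ whose first $n-1$ entries still generate $G$.

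The problem is therefore reduced to showing that, for $n\geq B$, every generating $n$-tuple of such a $G$ is Nielsen equivalent to a redundant one. Here I would bring in the structure theory. A finite group acting freely and linearly on a sphere has periodic cohomology, so every abelian subgroup is cyclic and every Sylow subgroup is cyclic or generalized quaternion; by the Wolf--Suzuki--Zassenhaus classification these groups form a controlled list of families (metacyclic $Z$-groups, generalized-quaternion extensions, and the finitely many binary polyhedral families, the only non-solvable one being $\mathrm{SL}_2(\mathbb{F}_5)$), all of minimal generating number uniformly bounded --- in fact at most two. It is precisely this uniform bound on rank that will let the threshold $B$ be chosen independently of $G$.

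The heart of the argument, and the step I expect to be the main obstacle, is to show that once $n$ exceeds a bound depending only on the rank (hence a universal $B$), the redundant tuples meet \emph{every} connected component of the product-replacement (Nielsen) graph $\Gamma_n(G)$. A pigeonhole observation shows this is at least plausible --- if $n>|G|$ then two entries coincide, and a single Nielsen move turns one of them into the identity --- but this only yields a $|G|$-dependent bound, whereas uniformity in $G$ is exactly what we need. To obtain the uniform bound I would instead argue up a normal series adapted to the Sylow structure, using a Gasch\"utz-type lifting lemma to propagate redundancy from quotients to $G$ and thereby kill the Nielsen-equivalence invariants that obstruct reducing a tuple; the solvable groups on the list are handled this way, while the single non-solvable family $\mathrm{SL}_2(\mathbb{F}_5)$ (and its coprime extensions) is treated by a direct analysis of $\Gamma_n$. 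The delicate point throughout is uniformity: one must ensure that every elementary reduction can be completed after adjoining only a bounded number of auxiliary generators, so that the final bound $B$ never sees the order of $G$.
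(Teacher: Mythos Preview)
Your reduction and invocation of the classification are correct, and the outline would work, but the paper takes a shorter route that sidesteps product-replacement connectivity entirely. Rather than showing that every generating $n$-tuple is Nielsen-equivalent to a redundant one, the paper introduces the property $\mathrm{KC}_i$ (every surjection $F_n\to G$ has a rank-$(n-i)$ free factor in its kernel), which is strictly stronger than containing a single primitive and is trivially additive in extensions by the one-line observation that free factors of free factors are free factors --- no Gasch\"utz-type lifting or analysis of Nielsen-equivalence invariants is needed. Cyclic groups have $\mathrm{KC}_1$ (Euclidean algorithm) and any fixed finite group $H$ has $\mathrm{KC}_{|H|+1}$ (pigeonhole). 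The classification then supplies a normal cyclic-by-cyclic $N\lhd G$ (hence $\mathrm{KC}_2$) with $G/N$ either cyclic-by-cyclic or one of finitely many fixed groups; summing the $\mathrm{KC}$-indices yields a universal $B$. Your route, via Dunwoody for the solvable piece and a direct analysis of $\mathrm{SL}_2(\mathbb{F}_5)$ for the non-solvable exceptions, would likely give a much smaller $B$ (possibly $3$, matching the $d(G)\leq 2$ bound you note), but the paper only claims existence, and its argument is essentially two sentences once the $\mathrm{KC}_i$ lemma is in place.
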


We remark that the finite groups $G$ that have a free linear action on some
sphere have been classified; see e.g. \cite{W}, \cite{DM} or
\cite{N}. We use this classification in our proof of Theorem~\ref{theorem:spheres}.

\para{Algorithms.} In \S\ref{sec:algorithms} we consider the search for a surjection $\phi:F_n\to G$ with $\Irr(G) \neq \Irrpr(\phi,G)$ from an algorithmic perspective.  
A priori, the question if $\Irr(G) = \Irrpr(\phi,G)$ requires
knowledge about an infinite set (of all primitives) in the free
group. However, we will prove (Proposition~\ref{prop:alg}) one can algorithmically compute the set of all
elements of $G$ that are images of primitive elements under $\phi$. This allows computer-assisted searches for examples 
with $\Irr(G) \neq \Irrpr(\phi,G)$.

Concerning Theorem~\ref{theorem:spheres} above, experiments suggest that
the constant $B$ is likely very small, maybe
even $3$. We have checked that $\Irrpr(\phi, G)=\Irr(G)$ for all such $G$ and $\phi$ with 
$|G|\leq 1000$.   On the other hand, in Proposition~\ref{prop:examples-primitive} we use a computer-assisted search to give an example that shows $B$ must be at least $3$.  

\paragraph{The case of surfaces.} There is an analogous theory to the above with the finite graph $X$ replaced by a surface $S_g$, 
and primitive elements in $\pi_1(X)$ replaced by simple closed curves in $\pi_1(S_g)$.   The ``simple closed curve homology'' is closely related to the problem of vanishing (or not) of the virtual first Betti number of the moduli space of genus $g\geq $ Riemann surfaces.  We sketch this theory in 
\S\ref{section:surfaces}.

\paragraph*{Acknowledgements.} The authors would like to thank Andy
Putman and Justin Malestein for interest in the project and comments.

\section{Representation Theory of $H_1^{\mathrm{prim}}$}
\label{sec:primitive}

In this section we prove Theorem~\ref{theorem:restriction}, and give a
partial converse. We begin by setting up some notation that we will
use throughout the article. $X$ will denote the wedge of $n$ copies of
$S^1$.

%\subsection{}
Given a regular covering $Y\to X$, we fix a \emph{preferred} lift $\hat{x}_0$
of the basepoint $x_0\in X$. Given any loop $\gamma$ in $X$, the \emph{preferred elevation} $\hat{\gamma}$
of $\gamma$ is the lift at $\hat{x}_0$ of the smallest power $\gamma^k$ which does lift
(with degree $1$) to $Y$. Any image of $\hat{\gamma}$ under an element of the
deck group will be called an \emph{elevation} of $\gamma$.

We denote by $G_\gamma$ the stabilizer in $G$ of the preferred elevation of $\gamma$.  Note that $G_\gamma$ is cyclic.  It is generated by the image of $[\gamma]\in\pi_1(X)$ under the surjection $\pi_1(X)\to G$.  We denote this element by $g_\gamma$.

The key step in the proof of Theorem~\ref{theorem:restriction} is
the following proposition, which can be considered as a new entry in the
dictionary discussed in the introduction. 
\begin{proposition}\label{prop:rep-lift-of-prim}
  Let $X$ be a wedge of $n$ copies of $S^1$.  Let $Y\to X$ be a regular cover with 
  deck group $G$. Let $l$ be a primitive loop on $X$ and let $\widetilde{l}$ be
  the preferred elevation of $l$ to $Y$. Then there is an isomorphism of $G$--representations:  
  \[ 
  \mathrm{Span}_{H_1(X)}\left\{ g\cdot [\widetilde{l}]: g \in G] \right\}
  \cong
  \mathrm{Ind}_{G_\ell}^G\C_{\mathrm{triv}}
  \]
\end{proposition}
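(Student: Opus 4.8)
The plan is to pass to an explicit combinatorial model of the cover and then use the fact that the homology of a graph is \emph{literally} its cycle space, so that questions about relations among homology classes of loops become questions about relations among $1$-chains. First I would use primitivity to normalize the picture. Since $[l]$ is primitive, extend it to a free basis $[l]=b_1,b_2,\dots,b_n$ of $F_n=\pi_1(X)$. A change of free basis is realized by a homotopy equivalence of the wedge $X$, which lifts to a $G$-equivariant homotopy equivalence of the associated covers; this preserves $H_1(Y;\C)$ as a $G$-representation and carries the preferred-elevation data along. Hence we may assume $l$ is the first petal $a_1$ of the wedge. With this normalization $Y$ is identified with the Cayley graph of $G$ with respect to the generating multiset $\{g_i:=\phi(a_i)\}$: the vertex set is $G$ (with $\hat x_0=e$), for each $i$ there is an oriented edge $g\to g g_i$ labeled $a_i$, and the deck group acts by left multiplication.

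Next I would describe the elevations concretely. In this model the preferred elevation $\widetilde l$ is the oriented cycle $e\to g_1\to\cdots\to g_1^{k-1}\to e$, where $k=\mathrm{ord}(g_1)$, so $G_\ell=\langle g_1\rangle$ and $g_\ell=g_1$. For $g\in G$ the translate $g\cdot\widetilde l$ is the cycle on the vertex set $g\langle g_1\rangle$; two translates coincide exactly when the corresponding left cosets in $G/G_\ell$ agree, and otherwise they are \emph{vertex-disjoint}. Thus the collection of distinct elevations is precisely the set of connected components of the subgraph of all $a_1$-labeled edges, a disjoint union of vertex-disjoint (hence edge-disjoint) $k$-cycles indexed by $G/G_\ell$. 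The left-multiplication action permutes these cycles exactly as $G$ permutes the cosets $G/G_\ell$, and every element of $G_\ell$ maps $\widetilde l$ to itself preserving its orientation (a deck transformation respects the orientation of the edges lying over $l$), so $G_\ell$ acts trivially on $[\widetilde l]\in H_1(Y;\C)$.

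Finally I would establish linear independence and assemble the isomorphism. Because $Y$ is a graph, i.e.\ a $1$-dimensional CW complex with no $2$-cells, we have
\[
H_1(Y;\C)=Z_1(Y;\C)=\ker\!\big(\partial_1:C_1(Y;\C)\to C_0(Y;\C)\big)\subseteq C_1(Y;\C),
\]
so the homology class of each elevation \emph{is} the $1$-chain it traverses. Since the elevations are pairwise edge-disjoint cycles, the chains $\{g\cdot\widetilde l\}$ taken over coset representatives $gG_\ell\in G/G_\ell$ are linearly independent in $C_1(Y;\C)$, hence linearly independent in $H_1(Y;\C)$. Therefore the span $\mathrm{Span}\{g\cdot[\widetilde l]:g\in G\}$ is, as a $G$-representation, the permutation representation $\C[G/G_\ell]$ on the elevations, which is canonically $\mathrm{Ind}_{G_\ell}^G\C_{\mathrm{triv}}$.

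The step I expect to be the main obstacle is ensuring there are no \emph{extra} relations among the classes $[g\cdot\widetilde l]$; a priori homology of a loop could be forced by relations coming from the rest of the graph. This is exactly where the reduction pays off: once $l=a_1$ the elevations become edge-disjoint, and the identification $H_1=\ker\partial_1$ turns independence into the trivial observation that edge-disjoint cycles are independent $1$-chains. The one point requiring genuine care is the justification that the change-of-basis homotopy equivalence can be taken $G$-equivariant on covers (so that it preserves the $G$-representation and the elevation combinatorics); alternatively, one can bypass the normalization and argue directly that distinct elevations of a primitive loop are edge-disjoint, but the Cayley-graph model makes both the $G$-set structure and the independence most transparent.
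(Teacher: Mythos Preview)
Your proof is correct and takes a genuinely different, more elementary route than the paper's. Both arguments begin with the same normalization---using transitivity of $\Out(F_n)$ on primitives to put $l$ into standard position---but then diverge sharply. The paper embeds $X$ as the spine of an $(n+1)$--holed sphere $S$ so that $l$ becomes a simple closed curve $\alpha$, passes to the induced surface cover $F\to S$, and proves linear independence of the elevations $[\hat\alpha_i]$ by a surface--topology argument: using that every complementary region of $\bigcup_i\hat\alpha_i$ in $F$ contains a boundary component, one inductively trades curve classes for boundary classes until only a nonseparating multicurve and boundary components remain, at which point independence is clear. You instead stay entirely in the graph category: after normalizing $l$ to the petal $a_1$, the elevations are exactly the connected components of the $a_1$--edge subgraph of the Cayley graph, hence pairwise edge--disjoint cycles, and since $H_1$ of a graph injects into $C_1$ (no $2$--cells), independence is immediate from disjointness of supports. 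Your approach is shorter and avoids surfaces altogether; the paper's approach has the virtue of running in parallel with the closed--surface case (Theorem~\ref{thm:characterize-scc}), where no graph model is available and the complementary--region argument is genuinely required. The caveat you flag about $G$--equivariance of the lifted homotopy equivalence is real but routine: realizing the basis change by $h:X\to X$, one lifts $h$ to a $G$--equivariant homotopy equivalence between the cover for $\phi$ and the cover for $\phi\circ h_*$; this is precisely what the paper invokes implicitly in its Step~1.
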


The proof of Proposition~\ref{prop:rep-lift-of-prim}  uses surface topology. 
 
\begin{proof}[Proof of Proposition~\ref{prop:rep-lift-of-prim}]
We prove the proposition in three steps.

\medskip
\noindent
{\bf Step 1 (Reduction to the surface case): }  
We choose an identification $X$ with the core graph of a
$(n+1)$--bordered sphere $S$ so that $l$ is freely homotopic to a
simple closed curve $\alpha$ on $S$. This is possible since
$\mathrm{Out}(F_n)$ acts transitively on the set of (conjugacy classes
of) primitive elements of $F_n$. In addition, we can assume that 
each component of $S-\alpha$ contains at least one boundary
component of $S$.

Let $F \to S$ denote the cover defined by
$\pi_1(Y) < \pi_1(X) = \pi_1(S)$. Note that $F$ is $G$--equivariantly
homotopy equivalent to $Y$. Let $\hat{\alpha}_i$ be the elevations of
$\alpha$ in $F$. The homology classes $[\hat{\alpha}_i]$ are exactly
the homology classes defined by the elevations of $l$ in $Y$. 

To prove Proposition~\ref{prop:rep-lift-of-prim}, it therefore suffices
to consider the case of a cover $F\to S$ of surfaces with boundary,
deck group $G$, and and $l$ a simple closed curve $\alpha$ with elevations $[\hat{\alpha}_i]$ with the property that each component of $S-\alpha$ contains at least one boundary component of 
$S$. We assume this setup for the rest of the proof of the proposition.

\medskip
\noindent
{\bf Step 2 (Independence of Elevations): }  Let $F$ be a surface with boundary, and let $\hat{\alpha}_1, \ldots,  \hat{\alpha}_n$ be disjoint, pairwise nonisotopic simple closed
  curves on $F$. Suppose that each complementary component of
  $\hat{\alpha}_1\cup \ldots \cup \hat{\alpha}_n$ in $F$ contains at
  least one boundary component of $F$.  We claim that $\{[\hat{\alpha}_1], \ldots, [\hat{\alpha}_n]\}\subset  H_1(F;\C)$ is linearly independent.

To see this, let $R_1,\ldots, R_k$ be the complementary components of
  $\hat{\alpha}_1\cup \ldots\cup \hat{\alpha}_n$ in $F$.  Let $\delta_i\subset
  R_i$ be the multicurve consisting of the union of all boundary components of $F$
  contained in $R_i$. Suppose that 
\begin{equation}
\label{eq:primlift1}
 a_1[\hat{\alpha}_1] + \ldots + a_n[\hat{\alpha}_n] = 0.   
 \end{equation}
 
 We can assume that all $a_i \neq 0$.
  Suppose that $\hat{\alpha}_1\cup \ldots, \cup \hat{\alpha}_n$
  separates the surface, since otherwise there is nothing to show. Thus, 
  without loss of generality, we can assume that $\hat{\alpha}_n$ lies in $\partial R_1\cap\partial R_2$.  By adding a suitable multiple of
  $\partial R_2$ if necessary, we can rewrite \eqref{eq:primlift1} as 
  \[ a'_1[\hat{\alpha}_1] + \ldots + a'_{n-1}[\hat{\alpha}_{n-1}] +
  b_2[\delta_2] = 0 \]
  for some $a'_i$ and some $b_2$.   Since $\hat{\alpha}_n$ does not in this equation, the
  support of this new relation contains $R_1 \cup R_2$ in its
  complement. Denote by $R_1'$ the complementary component containing
  $R_1$. We can now repeat the argument: unless $\hat{\alpha}_1,
  \ldots, \hat{\alpha}_{n-1}$ has become nonseparating, there will be
  some curve which lies on the boundary of $R_1'$ and a second
  boundary component $R_j$ for some $j$. Repeating this modification a
  finite number of times we end up with 
  \[ A_1[\hat{\alpha}_1] + \ldots + A_l[\hat{\alpha}_l] +
  b_2[\delta_2] + \ldots + b_k[\delta_k]= 0 \]
  where now $\hat{\alpha}_1, \ldots, \hat{\alpha}_l$ is
  nonseparating. Note that $[\delta_1]$ does not appear in this
  expression, and thus all involved classes are linearly
  independent. Thus, all coefficients are $0$, which implies that the
  original linear combination was trivial as well. This proves the claim.

\medskip 
\noindent
{\bf Step 3 (Identification of the representation): }By construction, the element $g_\alpha \in G$ fixes the preferred
elevation $\hat{\alpha}_1$ and permutes the other elevations
$\hat{\alpha}_i$ in the same exact way that it permutes the the cosets of $G_\alpha=\langle g_\alpha \rangle$ in  $G$.  By the standard characterization of induced representations and the
linear independence of the classes $[\hat{\alpha}_i]$, 
Proposition~\ref{prop:rep-lift-of-prim} follows.  
\end{proof}

\begin{proof}[Proof of Theorem~\ref{theorem:restriction}]
For any $G$-representations $U,W$, denote by $\langle U,W\rangle$ the inner product of the characters of the representations $U$ and $W$.  Let $V$ be any irreducible $G$-representation. Proposition~\ref{prop:rep-lift-of-prim} followed by Frobenius Reciprocity gives:

  \[
  \begin{array}{ll}
  \langle  \mathrm{Span}_{H_1(Y)}\left\{[\hat{\alpha}_1], \ldots, [\hat{\alpha}_n] \right\},V\rangle _G& =\langle \mathrm{Ind}_{G_\alpha}^G\C_{\mathrm{triv}}, V \rangle _G\\
  &\\
    & = \langle
  \mathrm{Res}_{G_\alpha}^GV,
  \C_{\mathrm{triv}}\rangle_{G_\alpha}\\
  &\\
  & =\dim(\Fix(G_\alpha)). 
  \end{array}\] 
  
  Thus an irreducible representation $V$ appears in
  $\mathrm{Span}_{H_1(Y)}\left\{ [\hat{\alpha}_1], \ldots,
    [\hat{\alpha}_n] \right\}$ if and only if $G_\alpha$ has a nonzero
  fixed vector in $V$ (equivalently, since $G_\alpha$ is cyclic, generated by $g_\alpha$, this is equivalent to $g_\alpha$ having a non-fixed vector).  
  Since every $G$-representation is a direct sum of irreducible representations, this
  implies that an irreducible representation $V$ appears in
  $H_1^{\mathrm{prim}}(Y)$ only if $V\in\Irrpr(\phi,G)$.
\end{proof}

%  \subsection{A partial converse}\label{sec:converse}
The rest of this section is devoted to a criterion which can in theory be used
to determine the multiplicity of a given $V_i \in \Irrpr$ in the 
subrepresentation $H^{\mathrm{prim}}_1$.

To begin, we first note the following simple consequence of transfer.  
  \begin{lemma}
    Let $Y \to X$ be a regular $G$--covering, where $G$ is a finite group and $X$ is a graph
    with $\rank(\pi_1(X))\geq 2$. Let $g\in G$ be any element. Then
    \[ H_1(Y;\C)^{<g>} \iso H_1(Y/\langle g\rangle;\C) \]
    The same is true for primitive homology:
    \[ H^{\mathrm{prim}}_1(Y;\C)^{<g>} \iso H^{\mathrm{prim}}_1(Y/\langle g\rangle;\C) \]
  \end{lemma}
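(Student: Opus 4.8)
The plan is to deduce both isomorphisms from the transfer map associated to the finite regular covering $p\colon Y\to Y/H$, where $H\coloneq\langle g\rangle$. Recall that over $\C$ (where we may divide by $|H|$) there is a transfer homomorphism $\tau\colon H_1(Y/H;\C)\to H_1(Y;\C)$ satisfying $p_*\circ\tau=|H|\cdot\mathrm{id}$ and $\tau\circ p_*=\sum_{h\in H}h_*$, and whose image lies in the invariants $H_1(Y;\C)^{\langle g\rangle}$: since permuting the sheets by $h\in H$ fixes the sum of all lifts, one has $h_*\circ\tau=\tau$. I would first establish the full-homology statement. Restricted to $H_1(Y;\C)^{\langle g\rangle}$, the relation $\tau\circ p_*=\sum_{h\in H}h_*=|H|\cdot e$, with $e$ the averaging projection onto the invariants, shows that $p_*$ is injective there; and for any class $y\in H_1(Y/H;\C)$ the element $\tfrac1{|H|}\tau(y)$ lies in $H_1(Y;\C)^{\langle g\rangle}$ and maps to $y$, giving surjectivity. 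Thus $p_*\colon H_1(Y;\C)^{\langle g\rangle}\xrightarrow{\ \cong\ }H_1(Y/H;\C)$.

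To upgrade this to primitive homology, I would check that the isomorphism $p_*$ and its inverse $\tfrac1{|H|}\tau$ both carry primitive homology to primitive homology. The geometric input is that elevations compose well in the tower $Y\to Y/H\to X$: the image under $p$ of an elevation of a primitive loop $l$ in $Y$ is an elevation of $l$ in $Y/H$, and conversely the full preimage $p^{-1}(\hat{l})$ of an elevation $\hat{l}$ of $l$ in $Y/H$ is a disjoint union of elevations of $l$ in $Y$. Both facts follow from transitivity of the ``smallest power that lifts'' under composition of covers. Consequently $p_*$ sends the spanning classes of $H_1^{\mathrm{prim}}(Y;\C)$ to spanning classes of $H_1^{\mathrm{prim}}(Y/H;\C)$, so $p_*\bigl(H_1^{\mathrm{prim}}(Y;\C)\bigr)\subseteq H_1^{\mathrm{prim}}(Y/H;\C)$.

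For the reverse inclusion I would use the chain-level description of the transfer: on cycles, $\tau$ sends a simplex to the sum of its lifts, so $\tau[\hat{l}]$ is represented by the $1$-cycle $p^{-1}(\hat{l})$. By the compositionality above, this preimage is a sum of elevations of $l$ in $Y$, whence $\tau[\hat{l}]\in H_1^{\mathrm{prim}}(Y;\C)$ (and it lies in the invariants, as already noted). Restricting the two mutually inverse maps $p_*$ and $\tfrac1{|H|}\tau$ to the primitive subspaces then yields the desired isomorphism $H_1^{\mathrm{prim}}(Y;\C)^{\langle g\rangle}\cong H_1^{\mathrm{prim}}(Y/H;\C)$. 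Here I use that $H_1^{\mathrm{prim}}(Y;\C)$ is a $G$-subrepresentation, hence $H$-stable, so its $H$-invariants are defined, and that the primitive homology of the (possibly nonregular) intermediate cover $Y/H\to X$ is defined exactly as the span of elevations of primitive loops.

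The step I expect to be the main obstacle is the compatibility of elevations with the chain-level transfer: justifying that $\tau$ of the class of an elevation is represented by the honest preimage cycle, and that this preimage decomposes into elevations of the \emph{same} primitive loop. This requires care with the definition of elevations (taking the smallest lifting power at each stage of the tower) and with base points and orientations, so that the preimage cycle is genuinely a sum of elevation classes rather than an uncontrolled multiple. Once this bookkeeping is in place, the transfer formalism does the rest.
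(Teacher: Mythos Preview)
Your approach is correct and is essentially the same as the paper's: both use the transfer map for the finite cover $Y\to Y/\langle g\rangle$, observe that its image lands in the $\langle g\rangle$-invariants, and note that transfer preserves primitive homology. The paper's proof is extremely terse (two sentences), while you have supplied the standard details---the identities $p_*\tau=|H|\cdot\mathrm{id}$ and $\tau p_*=\sum_h h_*$, and the chain-level reason why $\tau$ of an elevation class is a sum of elevation classes. One small wording point: the image under $p$ of an elevation of $l$ in $Y$ is not literally an elevation in $Y/H$ but rather a power of one (it traverses the underlying elevation $k/m$ times, in your notation); this does not affect the argument since the class is still a scalar multiple of an elevation class and hence lies in $H_1^{\mathrm{prim}}(Y/H;\C)$, and you already flag this bookkeeping as the place needing care.
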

  \begin{proof}
    To see the first claim, it suffices to note that the transfer map
    $H_1(Y/\langle g\rangle;\C)\to H_1(Y;\C)$ has image in
    $H_1(Y;\C)^{<g>}$ by construction. The second claim follows from the first since
    the transfer map respects primitive homology.
  \end{proof}
  
  Now suppose that $V_i \in \Irrpr(\phi,G)$.  Let $v\in V_i$ be a vector
  so that $g_x\cdot v = v$ for some primitive $x$. If we
  identify $V_i$ as a subrepresentation of $H_1(Y;\C)$ in any way, then 
  $v \in H_1(Y;\C)^{<g_x>}$. Also note that since $H_1^{\mathrm{prim}}(Y;\C)$
  is a $G$-subrepresentation, it contains any vector $v\in V_i$ if and only if
  it contains the complete representation $V_i$. Thus we conclude the following 
  for the natural projection map $p:H_1(Y;\C)\to H_1(Y/\langle g\rangle;\C)$.  
  
  \begin{observation}[{\bf Transfer criterion}]
  \label{obs:intermediate-criterion}
  Let $V_i\in\Irr(G)$ be a subrepresentation $V_i \subset
  H_1(Y;\C)$.
  Then $V_i \subset H_1^{\mathrm{prim}}(Y;\C)$ if and only if
  $0 \neq p(v) \in H_1^{\mathrm{prim}}(Y/\langle g_x\rangle;\C)$ for
  any $v \in V_i$, and an element $g_x$ which is the image of a
  primitive element in $F_n$.
  \end{observation}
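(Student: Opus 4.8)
The plan is to leverage the discussion immediately preceding the statement, which already reduces the membership $V_i\subseteq H_1^{\mathrm{prim}}(Y;\C)$ to a single vector. Since $V_i$ is irreducible and $H_1^{\mathrm{prim}}(Y;\C)$ is a $G$-subrepresentation, $V_i\cap H_1^{\mathrm{prim}}(Y;\C)$ is either $0$ or all of $V_i$; hence $V_i\subseteq H_1^{\mathrm{prim}}(Y;\C)$ if and only if some fixed nonzero $v\in V_i$ lies in $H_1^{\mathrm{prim}}(Y;\C)$. I would take $v$ to be the $g_x$-invariant vector furnished by the hypothesis that $g_x$ is the image of a primitive element with $\Fix(g_x)\cap V_i\neq 0$ (if no such pair $(g_x,v)$ exists, then $V_i\notin\Irrpr(\phi,G)$ and Theorem~\ref{theorem:restriction} already gives $V_i\not\subseteq H_1^{\mathrm{prim}}(Y;\C)$, matching the empty right-hand side). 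Because $g_x\cdot v=v$, we have $v\in H_1(Y;\C)^{\langle g_x\rangle}$, so $v\in H_1^{\mathrm{prim}}(Y;\C)$ if and only if $v\in H_1^{\mathrm{prim}}(Y;\C)^{\langle g_x\rangle}$. The whole question is thus transported into the intermediate cover $Y/\langle g_x\rangle$ by the transfer Lemma above.

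The technical core is to match the projection map $p=\pi_*\colon H_1(Y;\C)\to H_1(Y/\langle g_x\rangle;\C)$ induced by the quotient $\pi\colon Y\to Y/\langle g_x\rangle$ against the transfer map $\tau$ underlying the Lemma. I would record the two standard identities $\pi_*\circ\tau=|\langle g_x\rangle|\cdot\mathrm{id}$ and $\tau\circ\pi_*=\sum_{h\in\langle g_x\rangle}h_*$. The second identity acts as multiplication by $|\langle g_x\rangle|$ on the invariant subspace $H_1(Y;\C)^{\langle g_x\rangle}$; combined with the first, it shows that $p$ restricted to $H_1(Y;\C)^{\langle g_x\rangle}$ is exactly $|\langle g_x\rangle|^{-1}$ times the inverse of the transfer isomorphism, and in particular is an isomorphism onto $H_1(Y/\langle g_x\rangle;\C)$. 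Consequently $p(v)\neq 0$ for our nonzero invariant $v$, which disposes of the nonvanishing clause automatically.

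It then remains to establish the equivalence $v\in H_1^{\mathrm{prim}}(Y;\C)^{\langle g_x\rangle}\iff p(v)\in H_1^{\mathrm{prim}}(Y/\langle g_x\rangle;\C)$. For the forward direction I would argue directly that $\pi_*$ sends elevation classes to elevation classes: the image under $\pi$ of an elevation in $Y$ of a primitive loop $l\subset X$ is a multiple of an elevation of $l$ in $Y/\langle g_x\rangle$, so $p$ carries $H_1^{\mathrm{prim}}(Y;\C)$ into $H_1^{\mathrm{prim}}(Y/\langle g_x\rangle;\C)$. For the reverse direction I would invoke the refined transfer statement from the Lemma: if $p(v)\in H_1^{\mathrm{prim}}(Y/\langle g_x\rangle;\C)$, then $|\langle g_x\rangle|\cdot v=\tau(p(v))\in H_1^{\mathrm{prim}}(Y;\C)$ because transfer respects primitive homology, whence $v\in H_1^{\mathrm{prim}}(Y;\C)$.

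I expect the main obstacle to be precisely this bookkeeping between $p$ and $\tau$: the Lemma is phrased as an isomorphism realized by transfer, whereas the Observation is stated through the projection, so one must verify carefully that $p$ restricted to the invariants is the inverse of transfer up to the nonzero scalar $|\langle g_x\rangle|$, and that it intertwines the two primitive-homology subspaces in both directions. Once the commutation identities and the primitive-homology-preservation of both $\pi_*$ and $\tau$ are in hand, the scalars are harmless over $\C$ and the equivalence follows.
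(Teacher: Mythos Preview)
Your proposal is correct and follows essentially the same approach as the paper: the paper presents the Observation as an immediate consequence of the preceding transfer Lemma together with the irreducibility remark that $V_i\subset H_1^{\mathrm{prim}}(Y;\C)$ iff a single nonzero vector of $V_i$ lies there, and you have simply spelled out the bookkeeping (the identities $\pi_*\circ\tau=|\langle g_x\rangle|\cdot\mathrm{id}$ and $\tau\circ\pi_*=\sum_h h_*$, and the fact that both $\pi_*$ and $\tau$ preserve primitive homology) that the paper leaves implicit when it writes ``Thus we conclude the following.'' Your extra care in verifying that $p$ restricted to the $\langle g_x\rangle$-invariants is a nonzero scalar times the inverse of transfer is exactly the detail needed to reconcile the Lemma (stated via $\tau$) with the Observation (stated via $p$), and is correct.
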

  We expect that the primitive homology of the cover $Y/\langle
  g_x\rangle \to X$ should be easier to understand than that of $Y\to
  X$, being a cover of smaller degree.  However, it is in general not
  a regular cover, and so the methods developed in this paper seem to
  be less adapted to studying it.

  To give some evidence why $H_1^{\mathrm{prim}}(Y/\langle g_x\rangle;\C)$ should be easier to
  understand, we have the following. 
  
  \begin{lemma}
  \label{lem:regular-intermediate}
    Suppose $Z \to X$ is a regular cover, and that some primitive loop
    in $X$ lifts (with degree $1$). Then
    \[ H_1^{\mathrm{prim}}(Z;\C) = H_1(Z;\C) \]
  \end{lemma}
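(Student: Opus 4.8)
The hypothesis is best read group-theoretically. Writing the cover $Z\to X$ as a surjection $\phi:F_n\to G$ with $X$ the wedge of $n$ circles on a free basis $x_1,\dots,x_n$, the condition that some primitive loop lifts with degree $1$ says exactly that $\ker\phi$ contains a primitive element. Changing the free basis of $\pi_1(X)$ by an automorphism induces a homeomorphism of $X$ and hence an isomorphism of covers carrying $H_1^{\mathrm{prim}}$ to $H_1^{\mathrm{prim}}$ (as $\Aut(F_n)$ preserves the set of primitives), so I may assume $x_1\in\ker\phi$, i.e. $\phi(x_1)=1$. My plan is to split off the contribution of the elevations of $x_1$ from the rest of the homology and handle each piece separately.

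First I would describe $Z$ as a graph: its vertex set is $G$, the edge over $x_1$ lifts to a loop at every vertex (since $\phi(x_1)=1$), and the edges over $x_2,\dots,x_n$ form a subgraph $W\subset Z$ which is precisely the regular $G$-cover of the wedge $X'$ of the circles $x_2,\dots,x_n$ determined by the restriction $\bar\phi$ of $\phi$ to $\langle x_2,\dots,x_n\rangle$ (this $\bar\phi$ is onto, since $\phi$ is and $\phi(x_1)=1$). Thus $Z$ is $W$ with $|G|$ loops attached, one per vertex, and there is a $G$-equivariant splitting $H_1(Z;\C)\cong H_1(W;\C)\oplus\C[G]$, where the $\C[G]$ summand is spanned by the loops over $x_1$. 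These loops are exactly the elevations of the primitive element $x_1$, so this summand already lies in $H_1^{\mathrm{prim}}(Z;\C)$. It therefore remains to show $H_1(W;\C)\subseteq H_1^{\mathrm{prim}}(Z;\C)$.

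The key step, and the part I expect to require the most care, is realizing the classes in $H_1(W;\C)$ by elevations of primitives of the full group $F_n$. For any $w\in\langle x_2,\dots,x_n\rangle$ with $\phi(w)=1$, the element $x_1w$ is again primitive, since $x_1\mapsto x_1w$ (fixing the other basis elements) is an automorphism of $F_n$; moreover $\phi(x_1w)=1$, so $x_1w$ lifts with degree $1$. Tracing its preferred elevation based at the identity vertex, it first runs around the $x_1$-loop there and then around the lift of $w$ in $W$, giving the identity
\[ [\widetilde{x_1w}]=[\text{$x_1$-loop at }1]+[\widetilde{w}_W]. \]
Hence $[\widetilde{w}_W]$ is a difference of two elevations of primitive elements, so it lies in $H_1^{\mathrm{prim}}(Z;\C)$. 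Since every closed loop of $W$ is, up to the deck action, an elevation based at the identity of such a $w\in\ker\bar\phi$, and since $H_1(W;\C)$ is spanned by these classes together with their $G$-translates, the $G$-invariance of $H_1^{\mathrm{prim}}$ gives $H_1(W;\C)\subseteq H_1^{\mathrm{prim}}(Z;\C)$. Combining this with the $\C[G]$ summand yields $H_1^{\mathrm{prim}}(Z;\C)=H_1(Z;\C)$. The remaining obstacle is essentially bookkeeping: verifying that the preferred elevation of $x_1w$ decomposes as displayed, and that elevations of kernel elements over all basepoints indeed span $H_1(W;\C)$.
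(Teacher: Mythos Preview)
Your argument is correct and follows essentially the same route as the paper: reduce to the case where a basis element $x_1$ lies in $\ker\phi$, split off the $|G|$ lifted loops, and then use that $x_1w$ is primitive for every $w\in\langle x_2,\dots,x_n\rangle$ to capture the homology of the remaining subgraph $W$. One small inaccuracy: an automorphism of $F_n$ need not be realized by a \emph{homeomorphism} of the rose, only a homotopy equivalence---but this is irrelevant, since the definition of $H_1^{\mathrm{prim}}$ depends only on which elements of $\pi_1(X)$ are primitive, and that is basis-independent.
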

  \begin{proof}
    Up to applying an automorphism of $F_n$, assume that $a_1$ lifts.
    Let $X'$ be the wedge of the loops $a_2,\ldots ,a_n$.  Then $Z$ is
    the union of a connected cover of $X'$ together with $|G|$ loops
    corresponding to the lifts of $a_1$.  Since $a_1w$ is primitive
    for any $w \in \pi_1(X')=F(a_2, \ldots, a_n)$, the lemma follows.
  \end{proof}
  
  In light of this we pose the following.
  
  \begin{question}
     Suppose $Z  \to X$ is a (not necessarily regular!) cover, and suppose that some primitive loop in $X$ lifts to $Z$. Is it true
     that :
    \[ H_1^{\mathrm{prim}}(Z;\C) = H_1(Z;\C)? \]  
  \end{question}
  By the discussion above, a positive answer to this question is equivalent to the 
  statement that the inclusion in Theorem~\ref{theorem:restriction} is in fact an equality.

\section{Abelian and nilpotent covers}
\label{sec:abelian}
In this section we use the point of view developed in
Section~\ref{sec:primitive} to study the primitive homology of covers
whose deck groups are abelian or $2$-step nilpotent groups.
\subsection{Abelian covers}

We begin with the following, which is an easy consequence of the
standard fact that every representation of a finite abelian group
factors through a cyclic group.
\begin{proposition}[{\bf Abelian representations}]
\label{prop:abelian}
Suppose that $n \geq 2$, and $G$ is any finite Abelian group.
Then for any homomorphism $\phi:F_n\to G$, we have $\Irrpr(\phi,G)=\Irr(G)$.
\end{proposition}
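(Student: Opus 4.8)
The plan is to unwind the definition of $\Irrpr(\phi,G)$ in the abelian case and reduce the statement to a fact about sublattices of $\Z^n$. Since $G$ is finite abelian, every $V\in\Irr(G)$ is one-dimensional, given by a character $\chi:G\to\C^\times$. For such a $V$, the element $\phi(\gamma)$ acts by multiplication by $\chi(\phi(\gamma))$, so the defining condition of $\Irrpr$ — that $\phi(\gamma)$ fix some nonzero $v\in V$ — is just that $\chi(\phi(\gamma))=1$, i.e. that $\phi(\gamma)\in\ker(\chi)$. Thus I must show: for every character $\chi$ there is a primitive $\gamma\in F_n$ with $\phi(\gamma)\in\ker(\chi)$.

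First I would pass to the cyclic quotient $C:=G/\ker(\chi)$, through which $\chi$ factors (the standard fact quoted before the statement). Finding $\gamma$ with $\phi(\gamma)\in\ker(\chi)$ is then the same as finding a primitive $\gamma$ that maps to $0$ under the composite $\bar\psi:F_n\xrightarrow{\phi}G\to C$. Since $C$ is abelian, $\bar\psi$ factors through the abelianization $H_1(F_n;\Z)=\Z^n$, giving a homomorphism $\Z^n\to C$. Here I would invoke the dictionary between primitive elements of $F_n$ and unimodular vectors of $\Z^n$: the map $\Aut(F_n)\to \mathrm{GL}_n(\Z)$ is surjective and $\mathrm{GL}_n(\Z)$ acts transitively on unimodular vectors, so an integer vector lifts to a primitive element of $F_n$ exactly when it is unimodular (part of a $\Z$-basis). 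The problem becomes: the kernel $L\subseteq\Z^n$ of $\Z^n\to C$ contains a unimodular vector.

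The crux is the lattice computation. If $\Z^n\to C$ is the zero map we may take $\gamma=a_1$; otherwise its image is a nontrivial cyclic group, so $L$ is a finite-index subgroup with cyclic quotient $\Z^n/L$. By the structure theorem for finitely generated abelian groups (equivalently, Smith normal form), there is a $\Z$-basis $f_1,\ldots,f_n$ of $\Z^n$ and an integer $m$ with $L=\langle f_1,\ldots,f_{n-1},mf_n\rangle$; the point is that a cyclic quotient forces every invariant factor but the last to be trivial. Since $n\geq 2$, the vector $f_1$ lies in $L$ and is unimodular, producing the required primitive element of $F_n$.

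The main obstacle is exactly the hypothesis $n\geq 2$, which enters at this final step: it guarantees $n-1\geq 1$, so that a sublattice with cyclic quotient must contain at least one member $f_1$ of a $\Z$-basis of $\Z^n$. For $n=1$ the argument collapses, since a proper finite-index subgroup $m\Z\subsetneq\Z$ contains no unimodular vector — matching the necessity of the rank hypothesis. The remaining steps are routine once the reduction to unimodular vectors in a cyclic-quotient sublattice is in place.
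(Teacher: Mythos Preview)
Your proof is correct, and the underlying mechanism is the same as the paper's---both ultimately show that any homomorphism from $\Z^n$ (with $n\geq 2$) to a finite cyclic group has a basis vector in its kernel---but the packaging differs. The paper reduces to $n=2$ and runs the Euclidean algorithm by hand: writing $\rho(\phi(a))$ and $\rho(\phi(b))$ as rotations by $n_a/k$ and $n_b/k$, it applies the Nielsen move $b\mapsto ba^{-1}$ to decrease $n_a+n_b$ and inducts. You instead abelianize, invoke Smith normal form to get a $\Z$-basis $f_1,\ldots,f_n$ with $L=\langle f_1,\ldots,f_{n-1},mf_n\rangle$, and then lift $f_1$ back to a primitive of $F_n$ via the surjectivity of $\Aut(F_n)\to\mathrm{GL}_n(\Z)$. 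Your route is a bit more structural and makes the role of the hypothesis $n\geq 2$ completely transparent; the paper's is more self-contained (it never leaves $F_2$ and needs no outside facts about $\Aut(F_n)$ or lattice bases). Either way the content is the Euclidean algorithm.
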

\begin{proof}
  It is enough to prove this for the case $n=2$ since we can otherwise
  simply restrict to a free factor of rank $2$. Let $V\in \Irr(G)$ be
  given, and let $\rho:G\to \mathrm{GL}(V)$ be the corresponding
  representation. Since $G$ is abelian and $V$ is irreducible, $V$ is $1$-dimensional.  Let
  $\{a,b\}$ be a free basis of $F_2$. There are numbers $n_a,n_b\geq
  0,k>0$ so that
  \[ \rho(\phi(a))(z) = e^{2\pi i n_a/k} z, \quad \rho(\phi(b))(z) =
  e^{2\pi i n_b/k} z \]
  If either of $n_a,n_b$ is zero, we are done. Otherwise,
  without loss of generality we can assume that $0<n_a \leq n_b$. Note
  that $\{a, b':= ba^{-1}\}$ is also a free basis of $F_2$, and 
  \[ \rho(\phi(ba^{-1}))(z) = e^{2\pi i (n_b-n_a)/k} z. \]
  The proposition then follows by induction on $n_b + n_a$.
\end{proof}

Proposition~\ref{prop:abelian} can be improved to the following.
\begin{proposition}\label{prop:abelian-cover}
  Let $Y\to X$ be a regular cover with finite abelian deck group $G$.  Let $X$ be a graph with 
  $\rank(\pi_1(X))\geq 2$.  Then
  \[ H^{\mathrm{prim}}_1(Y;\C) = H_1(Y;\C). \]
\end{proposition}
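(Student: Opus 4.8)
The plan is to split $H_1(Y;\C)$ into isotypic pieces and attack each one through the rank-one local systems on $X$ that an abelian cover decomposes into. Since $G$ is abelian every irreducible is a character $\chi\in\widehat{G}$, and there is a $G$-equivariant isomorphism $H_1(Y;\C)\cong\bigoplus_{\chi}H_1(X;L_\chi)$ identifying the $\chi$-isotypic component with the twisted homology $H_1(X;L_\chi)$, where $L_\chi$ has monodromy $\lambda_i:=\chi(\phi(a_i))$ around the $i$-th loop $a_i$. Concretely $H_1(X;L_\chi)=\{c\in\C^n:\sum_i c_i(\lambda_i-1)=0\}$, of dimension $n-1$ for $\chi$ nontrivial (some $\lambda_i\neq1$ as $\phi$ is onto) and dimension $n$ for $\chi$ trivial; a dimension count against \eqref{eq:cw:graphs} confirms these account for all of $H_1(Y;\C)$. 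Under this identification, Proposition~\ref{prop:rep-lift-of-prim} shows that the $\chi$-component of the $G$-span of the elevation of a primitive loop $l$ is the line through the twisted class $[l]_\chi$ when $\chi(g_l)=1$, and is zero otherwise. The problem thus reduces to showing that for every $\chi$ the classes $[w]_\chi$, with $w\in F_n$ primitive and $\chi(\phi(w))=1$, span $H_1(X;L_\chi)$. For the trivial character this is immediate, since $[w]_{\mathrm{triv}}$ is the abelianization of $w$ and the generators $a_1,\dots,a_n$ already give a basis; so I fix a nontrivial $\chi$ from now on.

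The engine is Fox calculus. For a loop $w$ with $\chi(\phi(w))=1$ the twisted class is represented by $z_w=\big(\chi\phi(\partial w/\partial a_i)\big)_{i=1}^{n}\in\C^n$, where $\partial/\partial a_i$ is the Fox derivative and $\chi\phi\colon\Z[F_n]\to\C$ is the induced ring map; the fundamental formula $\sum_i(\partial w/\partial a_i)(a_i-1)=w-1$ shows $z_w$ lies in $H_1(X;L_\chi)$. The crucial point is that $z_w\neq0$ whenever $w$ is primitive: extending $w=w_1$ to a basis $\{w_1,\dots,w_n\}$ of $F_n$, the Jacobian $(\partial w_k/\partial a_i)_{k,i}$ is invertible over $\Z[F_n]$, so its image under $\chi\phi$ is an invertible complex matrix and in particular its first row $z_w$ is nonzero. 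I regard this — that $z_w$ genuinely depends on the word $w$, not just its abelianization, and is \emph{never} zero for primitive $w$ — as the main conceptual content; it is what defeats the naive attempt to use only words $a_i a_j^t$, whose classes can all vanish (as already happens for $F_2\to\Z/6$).

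To finish I produce a spanning set realized by primitives. After relabelling so that $\lambda_1\neq1$, the vectors $u_j:=(\lambda_1-1)\mathbf{e}_j-(\lambda_j-1)\mathbf{e}_1$ for $j=2,\dots,n$ form a basis of $H_1(X;L_\chi)$. For each $j$ I restrict to the rank-two free factor $\langle a_1,a_j\rangle$, on which $L_\chi$ remains nontrivial, so its twisted first homology is the single line $\C\,u_j$. The kernel of the cyclic quotient $\langle a_1,a_j\rangle\twoheadrightarrow\langle\lambda_1,\lambda_j\rangle\subset\C^*$ contains a primitive element $w_j$ (a coprime abelianization vector killing the monodromy is written down directly and is realized by some primitive word), and $w_j$ is then primitive in all of $F_n$. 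By the previous paragraph $z_{w_j}\neq0$, so $z_{w_j}$ is a nonzero multiple of $u_j$; letting $j$ vary recovers the whole basis. Hence every $H_1(X;L_\chi)$ is spanned by primitive classes, and summing over $\chi$ yields $H_1^{\mathrm{prim}}(Y;\C)=H_1(Y;\C)$.

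The step I expect to demand the most care is the identification in the first paragraph: checking that under the local-system decomposition the $\chi$-component of an elevation class is exactly the Fox vector $z_w$, with the normalization and $G$-equivariance matching Proposition~\ref{prop:rep-lift-of-prim}. Once that is in place, and with the non-vanishing $z_w\neq0$ in hand, the remaining rank-two bookkeeping is routine.
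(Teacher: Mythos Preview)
Your argument is correct but follows a genuinely different route from the paper's. The paper's proof is brief and structural: it invokes the transfer criterion (Observation~\ref{obs:intermediate-criterion}) to reduce the question to the intermediate quotient $Y/\langle g_x\rangle$, observes that for abelian $G$ this quotient is again a \emph{regular} cover in which the primitive $x$ lifts with degree one, and finishes immediately with Lemma~\ref{lem:regular-intermediate}; no local systems or Fox calculus appear. You instead bypass the transfer machinery entirely and compute directly inside each twisted piece $H_1(X;L_\chi)$, the decisive ingredient being that the Fox Jacobian of a free basis is invertible over $\Z[F_n]$, which forces $z_w\neq 0$ for every primitive $w$ with $\chi(\phi(w))=1$; the rank-two reduction then exhibits an explicit spanning set. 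The identification you flag as delicate does hold: under $H_1(Y;\C)\cong\bigoplus_\chi H_1(X;L_\chi)$ the preferred elevation of $w$ is the twisted class of $w^k$ with $k=|g_w|$, and the product rule $\partial w^k/\partial a_i=(1+w+\cdots+w^{k-1})\,\partial w/\partial a_i$ shows its $\chi$-component is $k\cdot z_w$ when $\chi(g_w)=1$. The paper's approach is shorter, uses only machinery already in place, and dovetails with the $2$-step nilpotent case that follows; yours is more explicit and self-contained, and actually names the primitives that span each isotypic component.
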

\begin{proof}
  Proposition~\ref{prop:abelian} gives that every $V\in
  \Irr(G)$ is also contained in $\Irrpr(G)$. Hence, we just need to
  show that the inclusion in Theorem~\ref{theorem:restriction} is an
  equality. Using Observation~\ref{obs:intermediate-criterion}, it suffices to show that
  \[ H_1^{\mathrm{prim}}(Y/\langle g\rangle;\C) =  H_1(Y/\langle g\rangle;\C) \]
  for all $g \in G$. However, since $G$ is abelian, this is simply a consequence of
  Lemma~\ref{lem:regular-intermediate}.
\end{proof}

%\subsection{Nilpotent Covers}
%\label{sec:nilpotent}

\subsection{$2$-step nilpotent covers}

We next consider covers with finite nilpotent deck group.

\begin{proposition}[{\bf \boldmath$2$-step nilpotent representations}]
\label{prop:nilpotent}
Suppose that $n\geq 3$ and that $G$ is finite $2$-step nilpotent.
Then $\Irrpr(\phi,G)=\Irr(G)$ for any homomorphism $\phi:F_n\to G$.
\end{proposition}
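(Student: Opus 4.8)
The plan is to reduce to the case $n=3$ (by restricting $\phi$ to a free factor of rank $3$, since any fixed vector witnessed over a subgroup of $F_n$ remains a fixed vector of the image of a primitive element of $F_n$) and then to exploit the structure of irreducible representations of a $2$-step nilpotent group. Recall that for such $G$ the commutator subgroup $[G,G]$ lies in the center $Z(G)$, and every irreducible representation $V$ restricts on the center to a single character $\chi$. The representation $V$ factors through $G/\ker(\chi|_{[G,G]})$, and the resulting quotient is still $2$-step nilpotent with cyclic (hence central-by-cyclic) structure governing $V$ via a symplectic-type pairing on $G/Z(G)$.

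First I would fix $V \in \Irr(G)$ with central character $\chi$, and consider the antisymmetric form $\omega$ on the abelianized quotient $G/Z(G)$ defined by $\omega(\bar g, \bar h) = \chi([g,h])$; this is well-defined precisely because $G$ is $2$-step nilpotent. The key point is that $V$ has a nonzero vector fixed by $\phi(\gamma)$ for a primitive $\gamma$ exactly when one can choose a primitive $\gamma$ whose image lands in a suitable isotropic-and-character-vanishing subgroup. Concretely, if $g = \phi(\gamma)$ acts on $V$ with an eigenvalue $1$, we are done; a cyclic subgroup $\langle g\rangle$ has a fixed vector in $V$ iff the restriction of $V$ to $\langle g\rangle$ contains the trivial character, which (since $V$ restricted to any abelian subgroup on which $\omega$ vanishes is $\chi$-isotypic up to the abelian analysis) reduces to arranging $\chi(g) = 1$ together with the vanishing of $\omega(\bar g, \cdot)$ on the relevant directions.

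Next I would run a Nielsen-move argument in the spirit of Proposition~\ref{prop:abelian}, but now in two layers. With a free basis $\{a,b,c\}$ of $F_3$, I would first use the abelian quotient to adjust the images modulo $[G,G]$ — exactly as in the abelian case, replacing basis elements by products to kill central-character obstructions — and then use the third generator $c$ and the commutator relations to correct the value of $\omega$. The extra rank (this is why $n\geq 3$ rather than $n\geq 2$) provides the room to simultaneously solve the linear condition coming from $\chi$ and the alternating condition coming from $\omega$: one can modify $a \mapsto a w$ for $w$ a word in $b,c$ to shift $\phi(a)$ by an arbitrary product of a commutator and a central element, and primitivity is preserved throughout because each step is an elementary Nielsen transformation.

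The main obstacle I expect is the simultaneous solvability step: showing that the joint conditions $\chi(\phi(\gamma)) = 1$ and the vanishing of the symplectic pairing $\omega(\overline{\phi(\gamma)}, -)$ on the span needed to produce a genuine fixed vector can always be met by a single primitive $\gamma$. This requires checking that the subgroup of achievable images $\{\phi(\gamma) : \gamma \text{ primitive}\}$ is large enough — in particular that Nielsen moves let one reach any element of $\phi(a)\cdot[G,G]\cdot Z(G)$ from the base generator — and then a counting or nondegeneracy argument that an isotropic vector with trivial central character exists inside a nonzero representation. Once the achievable set is understood, the existence of the desired $\gamma$ should follow from the fact that a nontrivial alternating form over a finite abelian group always has isotropic vectors, combined with the surjectivity of $\phi$ onto $G$ guaranteeing the center is hit.
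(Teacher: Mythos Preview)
Your proposal identifies the right ingredients---bilinearity of the commutator in a $2$-step nilpotent group, reduction to the abelian case, and the necessity of a third generator---but the central analytic claim is not correct, and you leave the argument incomplete precisely at the point you yourself flag as the obstacle.

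The problematic step is the assertion that $\rho(\phi(\gamma))$ having a nonzero fixed vector ``reduces to arranging $\chi(g)=1$ together with the vanishing of $\omega(\bar g,\cdot)$.'' This is neither necessary nor well-posed. In the Heisenberg group over $\Z/p$ with its $p$-dimensional irreducible representation, \emph{every} non-central element has $1$ among its eigenvalues, yet none of them is $\omega$-isotropic (the form is nondegenerate on $G/Z(G)$), and $\chi$ is only defined on the center. More generally, $V$ restricted to an abelian subgroup on which $\omega$ vanishes is not $\chi$-isotypic: it is a direct sum of \emph{various} characters extending $\chi|_{Z(G)}$, and which of them occur is exactly the delicate point. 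So the ``simultaneous solvability'' problem you pose is the wrong problem, and in any case you do not solve it.

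The paper's argument avoids all of this by reversing your two layers. Rather than first adjusting the abelianization and then correcting $\omega$, it first kills a commutator and then applies the abelian case on the resulting fixed subspace. Concretely: fix a one-dimensional irreducible $[G,G]$-subrepresentation $W\subset V$, and use the identity $\phi([ba^{-1},c])=\phi([b,c])\,\phi([a,c])^{-1}$ (valid because $G$ is $2$-step nilpotent) to run the Euclidean algorithm on the exponents with which $\phi([a,c])$ and $\phi([b,c])$ act on $W$. Nielsen moves in the $\langle a,b\rangle$ factor thus produce a basis in which, after relabeling, $\rho(\phi([a,b]))$ fixes $W$ pointwise. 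Now set $V_0=\{v\in V:\rho(\phi([a,b]))v=v\}\neq 0$; since $\phi([a,b])$ is central, $V_0$ is invariant under $\rho(\phi(a))$ and $\rho(\phi(b))$, and these two operators commute on $V_0$. The abelian case (Proposition~\ref{prop:abelian}) applied to the rank-$2$ free factor $\langle a,b\rangle$ then yields a primitive word in $a,b$ with a fixed vector in $V_0\subset V$. No description of fixed vectors in terms of $\chi$ and $\omega$ is ever required.
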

\begin{proof}
Since $G$ is $2$-step nilpotent, there is an exact sequence 
  \[ 0 \to A \to N \to Q \to 0 \]
 where $A$ and $Q$ are finite abelian.  Assume that $n\geq 3$.
 By passing to a free factor if necessary, we can assume that $n=3$.   
 
 We first claim that there is a free basis $\{a,b,c\}$ of
  $F_3$ so that $\rho(\phi([a,b]))v=v$ for some $v\in V$. To see this, consider a ($1$-dimensional) irreducible subrepresentation
  $W<V$ of $A$. Note that as $G$ is $2$-step nilpotent, we have
  $\phi([a,b]), \phi([b,c]) \in A$ and thus there are
  numbers  $n_a,n_b\geq 0,k>0$ so that for all $z\in W$
  \[ \rho(\phi([a,c]))(z) = e^{2\pi i n_a/k} z, \quad \rho(\phi([b,c]))(z) =
  e^{2\pi i n_b/k} z \]
  Again using that $G$ is $2$-step nilpotent, note that
  \[ \phi([ba^{-1}, c]) = \phi([b,c])\phi([a,c])^{-1} \]
  Thus, we have that 
  \[ \rho(\phi([ba^{-1},c]))(z) = e^{2\pi i (n_b-n_a)/k} z. \]
Applying the argument as in the case when $G$ is abelian gives the claim.

Now define 
  \[ V_0 := \{ v\in V: \rho(\phi([a,b]))v=v \}. \]
  Note that $V_0\neq 0$ and $V_0$ invariant under $\rho(\phi(a)), \rho(\phi(b))$. Finally, note that the restrictions of $\rho(\phi(a))$ and $\rho(\phi(b))$ to the invariant subspace $V_0$ commute.  
  Applying the case when $G$ is abelian, we conclude that there is some primitive
  (which is a product of $a,b$) that has a nonzero fixed vector in $V_0$.
  \end{proof}

To understand if $H_1^{\mathrm{prim}}(Y;\C)=H_1(Y;\C)$ for finite nilpotent covers $Y\to X$, 
we need a somewhat more precise understanding of the representations of finite nilpotent groups. We
begin with the following, likely standard, lemma.
\begin{lemma}\label{lem:cyclic center}
  Let $G$ be a finite group and let
  \[ \rho:G \to \mathrm{GL}(V) \]
  be any irreducible representation of $G$. Then $\rho$ factors through a
  quotient of $G$ which has cyclic center and acts
  faithfully via $\rho$. 
\end{lemma}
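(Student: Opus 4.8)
The plan is to take the obvious quotient, namely $\overline{G} := G/\ker(\rho)$, and verify that it has the two required properties. By construction $\rho$ factors through the quotient map $G \twoheadrightarrow \overline{G}$, yielding an induced representation $\overline{\rho}:\overline{G}\to \mathrm{GL}(V)$. This $\overline{\rho}$ is faithful, since we have quotiented out exactly the kernel of $\rho$, and it is still irreducible, because its image $\overline{\rho}(\overline{G}) = \rho(G)$ is unchanged and hence has no nontrivial invariant subspaces. So the only thing left to check is that the center $Z(\overline{G})$ is cyclic.

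The key step is an application of Schur's lemma, applied to $\overline{G}$ rather than to $G$. Any element $z \in Z(\overline{G})$ commutes with every element of $\overline{G}$, so $\overline{\rho}(z)$ commutes with every operator in $\overline{\rho}(\overline{G})$. Since $\overline{\rho}$ is irreducible, Schur's lemma forces $\overline{\rho}(z)$ to be a scalar operator $\lambda_z \cdot \mathrm{Id}_V$. The assignment $z \mapsto \lambda_z$ defines a homomorphism $Z(\overline{G}) \to \C^\times$, and it is injective precisely because $\overline{\rho}$ is faithful.

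Thus $Z(\overline{G})$ embeds as a finite subgroup of the multiplicative group $\C^\times$ (finite because $G$, and hence $\overline{G}$, is finite). Every finite subgroup of $\C^\times$ consists of roots of unity and is therefore cyclic, which gives the claim. I expect no genuine obstacle here; the only point requiring a moment's care is that one must apply Schur's lemma to the faithful irreducible representation $\overline{\rho}$ of the quotient, since it is the center of $\overline{G}$ — not the image of the center of $G$ — that one needs to show is cyclic. (Note that central elements of $G$ already act by scalars under $\rho$, but this alone does not bound $Z(\overline{G})$, which can be strictly larger than the image of $Z(G)$.)
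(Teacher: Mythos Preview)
Your argument is correct. Quotienting by $\ker(\rho)$ gives a faithful irreducible representation of $\overline{G}$, Schur's lemma forces $Z(\overline{G})$ to act by scalars, and faithfulness then embeds $Z(\overline{G})$ into $\C^\times$, so it is cyclic. This is a clean and complete proof.

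The paper takes a somewhat different route: it restricts $V$ to the center $Z=Z(G)$, observes that the $Z$--isotypic components are $G$--invariant and hence $V$ is $Z$--isotypic, and then uses the abelian case to find a subgroup $K\leq Z$ with $Z/K$ cyclic through which $\rho|_Z$ factors; one then passes to $G/K$ and, if necessary, to a further quotient to obtain faithfulness on the center. Your approach is more direct: by going immediately to $G/\ker(\rho)$ you get faithfulness for free and can apply Schur's lemma to the center of the \emph{quotient} itself, sidestepping the issue (which you correctly flag) that the center of a quotient can be strictly larger than the image of the center. The paper's approach, on the other hand, keeps closer track of which part of the original center survives, which is mildly useful downstream when one wants to relate the structure of the quotient to $[G,G]$ and $Z(G)$; but for the lemma as stated your argument is both shorter and sharper.
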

\begin{proof}
  First note that the lemma is true for $G$ abelian: any irreducible
  representation of a finite Abelian group factors through a cyclic
  quotient. For general $G$, let $Z$ be the center of $G$. Let
  $W:=\Res^G_ZV$. Since $Z$ is central, the $Z$--isotypic components of $W$
  are $G$--subrepresentations of $V$.
  Since $V$ is irreducible as a $G$-representation,
  $W$ therefore consists of a single $Z$-isotypic component.

  By the abelian case applied to (the $Z$-irreducible summand of) $W$, there is a subgroup $K<Z$ so that $Z/K$ is cyclic and $W$ (as a $Z$-represenation) factors through a representation of $Z/K$.
  Since $Z$ is the center of $G$ we have that $K$ is normal in $G$.  Hence $V$ factors through
  a representation of $G/K$, which has cyclic center. By taking a further quotient we can assume
  that $\rho$ is faithful when restricted to the center.
\end{proof}

\begin{proposition}
\label{prop:odd-nilp-strong-irrpr}
  Let $F_{2n+1}$ be a free group of odd rank at least $3$.  Let $\phi:F_n \to G$ be 
  a surjection onto a $2$-step nilpotent group $G$ whose center has the property that each 
  of its nontrivial cyclic subgroups has prime order. Then any
  irreducible representation of $G$ factors through a quotient $H$ of
  $G$ so that the induced map $\phi:F_n\to H$ has a primitive element
  in the kernel. 
\end{proposition}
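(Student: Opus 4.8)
The plan is to produce, for each irreducible representation $V$ of $G$, a quotient $H$ of $G$ through which $V$ factors together with a free basis of $F_n$ one of whose members lies in the kernel of the composite $\bar\phi\colon F_n\to G\to H$; that basis element is the desired primitive. The first move is to replace $G$ by a tractable quotient. By Lemma~\ref{lem:cyclic center}, $V$ factors through a quotient $H$ with cyclic center on which it acts faithfully. If $H$ is abelian then, having cyclic center, it is cyclic, and any surjection $F_n\to H$ with $n\geq 2$ visibly has a primitive in its kernel: after a Nielsen change of basis one generator maps onto a generator of $H$ while a second maps to the identity. So from now on I assume $H$ is nonabelian.

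Next I would pin down the structure of $H$. The hypothesis on $Z(G)$ forces $Z(G)$ to be an elementary abelian $p$-group for a single prime $p$ (two central elements of distinct prime orders would have product of non-prime order), so $G$, being nilpotent, is a $p$-group and $[G,G]\subseteq Z(G)$ has exponent $p$. Passing to $H$, the subgroup $[H,H]$ is a nontrivial elementary abelian subgroup of the cyclic group $Z(H)$, hence $[H,H]\iso\Z/p$; moreover $h^p\in Z(H)$ for every $h$ (because $[h^p,-]=[h,-]^p$ is trivial), so $H/Z(H)$ has exponent $p$ and the commutator pairing makes it a nondegenerate symplectic space over $\mathbb{F}_p$. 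In particular $\dim_{\mathbb{F}_p}H/Z(H)=2s$ is even.

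Now the oddness of $n$ enters. The images of a free basis generate $H$, hence span $H/Z(H)\iso\mathbb{F}_p^{2s}$, so $n\geq 2s$; since $n$ is odd and $2s$ is even, in fact $n\geq 2s+1$. The map $F_n\to H/Z(H)$ factors through $H_1(F_n;\mathbb{F}_p)=\mathbb{F}_p^n$, and automorphisms of $F_n$ realize all of $GL_n(\mathbb{F}_p)$ there; I would use such an automorphism to arrange a free basis $x_1,\dots,x_n$ whose last $n-2s\geq 1$ members map into $Z(H)$, with $x_1,\dots,x_{2s}$ mapping to a symplectic basis, and then (by a further automorphism supported on $x_1,\dots,x_{2s}$) normalize so that $[\bar\phi(x_1),\bar\phi(x_2)]$ generates $[H,H]$. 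If the image of the spare generator $x_n$ lies in $[H,H]=\langle z\rangle$, say $\bar\phi(x_n)=z^j=[\bar\phi(x_1),\bar\phi(x_2)]^j=\bar\phi([x_1^j,x_2])$, then $w:=x_n[x_1^j,x_2]^{-1}$ differs from the basis element $x_n$ by a word in the other generators, so $w$ is primitive, and $\bar\phi(w)=1$ by construction. This yields the required primitive in $\ker(\bar\phi)$.

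The main obstacle is precisely the gap between ``$x_n$ maps into $Z(H)$'' and ``$x_n$ maps into $[H,H]$'': a priori $Z(H)$ can be strictly larger than $[H,H]\iso\Z/p$ (indeed an element of $Z(H)$ of order $p^2$ is not in general realized by a commutator). This is where the hypothesis on the center is decisive: it must be used to force $Z(H)=[H,H]$. Under the assumption that every nontrivial subgroup of $Z(G)$ has prime order one gets $Z(G)\iso\Z/p$, and a short computation then shows that the full center $Z(H)$ is just the image of $Z(G)$, so $Z(H)=[H,H]\iso\Z/p$ and the commutator correction above applies verbatim. I therefore expect this structural control of the center, rather than the symplectic or homological bookkeeping, to be the crux of the argument; once it is in place, the oddness of $n$ supplies exactly the single spare generator the construction requires.
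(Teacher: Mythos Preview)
Your overall approach mirrors the paper's: pass via Lemma~\ref{lem:cyclic center} to a quotient $H$ with cyclic center, use oddness of the rank to produce a spare basis element mapping into $Z(H)$, then correct by a commutator to land in the kernel. You also correctly isolate the crux as the possible gap between $Z(H)$ and $[H,H]$.

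The problem is in how you close that gap. Early on you (correctly) read the hypothesis as forcing $Z(G)$ to be an elementary abelian $p$-group for a single prime $p$. In the final paragraph, however, you silently drop the word ``cyclic'' and deduce $Z(G)\cong\Z/p$; this is strictly stronger, since $(\Z/p)^k$ satisfies the stated hypothesis for every $k$. And under the hypothesis \emph{as written} the proposition is actually false. Let $G$ be the free $2$-step nilpotent group on $a,b,c$ modulo the relations $a^{p^2}=b^p=c^p=1$ and $[a,b]^p=[a,c]^p=[b,c]^p=1$; then $Z(G)\cong(\Z/p)^4$. The quotient $H=G/\langle [a,b],\ [a,c],\ [b,c]\,a^{-p}\rangle$ has cyclic center $\langle a\rangle\cong\Z/p^2$ (hence a faithful irreducible representation, which pulls back to an irreducible of $G$), while $H^{\mathrm{ab}}\cong(\Z/p)^3$ with the images of $a,b,c$ forming a basis. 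Under the obvious surjection $F_3\to G$, any primitive element maps to a primitive vector in $\Z^3$ and hence to a \emph{nonzero} element of $H^{\mathrm{ab}}$; thus $\ker(F_3\to H)$ contains no primitive, and since every quotient through which this representation factors surjects onto $H$, none works. The paper's own proof shares exactly this gap: it asserts $|Z|$ is prime ``by our assumption on the center of $G$'' \emph{after} having replaced $G$ by the quotient. Note that Theorem~\ref{theorem:nilpotent} states the stronger hypothesis ``every subgroup of the center has prime order'', i.e.\ $Z(G)\cong\Z/p$; under that assumption any nontrivial normal subgroup of the $p$-group $G$ contains $Z(G)=[G,G]$, so the only nonabelian quotient is $G$ itself, $Z(H)=[H,H]$ holds automatically, and then your argument (and the paper's) goes through.
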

    
\begin{proof}[Proof of Proposition~\ref{prop:odd-nilp-strong-irrpr}]
  Let $V$ be an irreducible representation of $G$. By
  Lemma~\ref{lem:cyclic center} we
  can assume that $G$ has the form
  \[ 1 \to Z \to G \to Q \to 1 \]
  where $Z$ is central and cyclic. If $Z$ is trivial, we are done by
  the Abelian case. Otherwise, by our assumption on the center of $G$, the order
  of $Z$ is prime. Since $G$ is $2$-step nilpotent,
  $[G,G]\subset Z$, and therefore in fact $[G,G] = Z$.

  We first aim to show that there is a free basis $a_1, \ldots, a_{2n+1}$ of $F_{2n+1}$
  so that $\phi(a_{2n+1})$ is contained in $Z$. Namely, consider
  $\phi([a_1,a_i])$ for $i>1$. If all of these elements are trivial, then
  $\phi(a_1) \in Z$ and we are done by relabeling. Otherwise, we can
  assume that $\phi([a_1,a_2])$ is nontrivial and hence a generator of $Z$. Next, we can
  arrange that $\phi([a_1,a_i]) = 1$ for all $i>2$, by replacing $a_i$
  by $a_ia_2^{-k}$ for a suitable $k$. Similarly, we can arrange
  that $\phi([a_2,a_i]) = 1$ for all $i>2$, by replacing $a_i$
  by $a_ia_1^{-l}$ for suitable $l$. Note that 
  \[ \phi([a_1,a_ia_1^{-l}]) = \phi([a_1,a_i]) \]
  and hence after this modification we have arranged that
  \[ \phi([a_i, a_j]) = 1 \quad\quad\mbox{for any}\; 1\leq i\leq 2 < j \]
  Furthermore, note that performing Nielsen moves on $a_3, \ldots, a_{2n+1}$
  does not break this property. We can thus inductively continue, finding
  pairs $\phi([a_{2r+1},a_{2r+2}])$ etc. which are nontrivial, but so that 
  $\phi([a_i,a_k]) = 1$ for $k > 2r+2, i \leq 2r+2$. Since $2n+1$ is odd,
  after at most $n$ steps we have thus found some $a_l$ so that
  $\phi([a_i,a_l]) = 1$ for all $i$, and hence $\phi(a_l) \in K$.

  \smallskip If $\phi(a_l) = 1$ we are already done. Otherwise, since
  $G$ is $2$-step nilpotent, $\phi(a_l) \in Z = [G,G]$. Thus, there is some element $M
  \in [F_{2n+1}, F_{2n+1}]$ so that $\phi(M) = \phi(a_l)^{-1}$.

  Next, consider the $2$-step nilpotent quotient $N$ of $F_{2n+1}$. By
  naturality, we have the following commutative diagram.

%  \[ 0 \to \wedge^2 H_1(F_{2n+1}) \to N \to H_1(F_{2n+1}) \to 0. \]

  \begin{center}
    \begin{tikzcd}
      0 \arrow{r} & {[F_{2n+1}, F_{2n+1}]} \arrow{r}\arrow{d} &
      F_{2n+1}\arrow{r}\arrow{d} &
      H_1(F_{2n+1};\Z)\arrow{r}\arrow{d} & 0 \\
      0 \arrow{r} & \wedge^2 H_1(F_{2n+1};\Z)\arrow{r}\arrow{d} &
      N\arrow{r}\arrow{d} &
      H_1(F_{2n+1};\Z)\arrow{r}\arrow{d} & 0 \\
      0 \arrow{r} & Z\arrow{r} & G\arrow{r} & Q\arrow{r} & 0
    \end{tikzcd}
  \end{center}
  We denote by $[a_l]$ the image of $a_l$ in $N$, and by
  $m \in \wedge^2 H_1(F_{2n+1})$ the image of $M$ in $N$.

  Note that $[a_l]m$ is the image of a primitive element $x \in F_n$: 
  conjugating $a_l$ by $a_i$ sends $[a_l]$ to $[a_l]([a_l]\wedge[a_i])$;
  multiplying it by $[a_i,a_k]$ sends it to $[a_l]([a_i]\wedge[a_k])$.   By construction (and naturality), we then have that $\phi(x) = 1$.
\end{proof}

\begin{remark} The proof of Proposition~\ref{prop:odd-nilp-strong-irrpr}
    relies on an understanding of the image of primitive elements in
    the universal $2$-step nilpotent quotient of a free group, or
    equivalently the action of the ``Torelli group'' $\mathrm{IA}_n$
    on that quotient.  As such, it is not entirely clear if a version 
    of Proposition~\ref{prop:odd-nilp-strong-irrpr} remains true for
    groups of higher nilpotence degree (possibly increasing the rank
    of the free group). Whether or not this is the case is an
    interesting question for further research.
  \end{remark}
  
 \begin{remark} The assumption in 
Proposition~\ref{prop:odd-nilp-strong-irrpr} that the rank of the free group is odd  is crucial: it is not true 
in general that there is a primitive element which maps into the center. However, there does not seem to be a reason to suspect that the conclusion of the proposition should be false for general
    free groups.   Similarly, the condition on the center in Proposition~\ref{prop:odd-nilp-strong-irrpr} is used in the proof, but it is not clear if this assumption is really required.
 \end{remark}

We are now able to deduce the following. 

\begin{corollary}[{\bf $H_1^{\mathrm{prim}}=H^1$ for certain nilpotent covers}]
 Let $G$ be 
  a $2$-step nilpotent group $G$ whose center has the property that each 
  nontrivial cyclic subgroup has prime order. Let $Y \to X$ be a finite normal $G$-cover with 
  $\rank(\pi_1(X))=2n+1, n\geq 1$.  Then
  \[ H_1(Y;\C) = H_1^{\mathrm{prim}}(Y;\C) \]
  for any regular cover. 
\end{corollary}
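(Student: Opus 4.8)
The plan is to show that the inclusion of Theorem~\ref{theorem:restriction} is an equality. Since both $H_1^{\mathrm{prim}}(Y;\C)$ and $H_1(Y;\C)$ are $G$-representations, it suffices to prove that every irreducible $V\in\Irr(G)$ occurs in $H_1^{\mathrm{prim}}(Y;\C)$ with its full Chevalley--Weil multiplicity $(2n)\cdot\dim(V)$ (and that $\C_{triv}$ occurs with multiplicity $2n+1$). The upper bound is exactly Theorem~\ref{theorem:restriction}, so the whole content is the matching lower bound, which I would establish one irreducible at a time.

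Fix an irreducible $V$. By Proposition~\ref{prop:odd-nilp-strong-irrpr}, whose hypotheses coincide with those of the corollary (with $\rank(\pi_1(X))=2n+1$ odd), the representation $V$ factors through a quotient $H=G/K$ for which the induced surjection $\bar\phi\colon F_{2n+1}\to H$ has a primitive element $x$ in its kernel. Here $K=\ker(G\to H)$ is, by the construction in Lemma~\ref{lem:cyclic center}, a subgroup of the center of $G$; in particular $K$ is normal, so $H$ acts as the deck group of the intermediate regular cover $Y_H:=Y/K\to X$. Since $\bar\phi(x)=1$, the primitive loop $x$ lifts with degree one to $Y_H$, and Lemma~\ref{lem:regular-intermediate} yields $H_1^{\mathrm{prim}}(Y_H;\C)=H_1(Y_H;\C)$. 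Applying Chevalley--Weil to the $H$-cover $Y_H\to X$, and noting that $V$ remains irreducible as an $H$-representation, we see that $V$ occurs in $H_1^{\mathrm{prim}}(Y_H;\C)$ with multiplicity $(2n)\cdot\dim(V)$.

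It remains to transport this multiplicity up to $Y$, for which I would use the transfer map $\tau\colon H_1(Y_H;\C)\to H_1(Y;\C)$ of the $K$-cover $Y\to Y_H$. This map is injective, is $G$-equivariant (with $G$ acting on $H_1(Y_H;\C)$ through $H$), and identifies $H_1(Y_H;\C)$ with the invariant subspace $H_1(Y;\C)^{K}$. The crucial point is that $\tau$ carries primitive homology into primitive homology: the transfer of the class of an elevation of a primitive loop $\ell$ in $Y_H$ is the sum of its preimages in $Y$, and one checks that these preimages are exactly the elevations of $\ell$ in $Y$ (the minimal lifting power of $\ell$ to $Y$ equals its minimal lifting power $k'$ to $Y_H$ times the order in $K$ of the residual element $g_\ell^{k'}$). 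Granting this, $\tau$ embeds the isotypic piece $V^{(2n)\dim(V)}\subseteq H_1^{\mathrm{prim}}(Y_H;\C)$ $G$-equivariantly into $H_1^{\mathrm{prim}}(Y;\C)$, which gives the desired lower bound on the multiplicity of $V$. Ranging over all $V\in\Irr(G)$, with the trivial representation handled identically via the quotient $H=1$ and $Y_H=X$, then forces $H_1^{\mathrm{prim}}(Y;\C)=H_1(Y;\C)$.

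The main obstacle I anticipate is the claim that transfer respects primitive homology for the possibly non-cyclic central subgroup $K$. The transfer lemma recorded earlier in \S\ref{sec:primitive} is stated only for cyclic $\langle g\rangle$, so I would either verify the elevation-counting computation above directly for general $K$, or else factor $K$ as a tower of cyclic central subgroups and apply the cyclic case repeatedly; both routes are routine but must be carried out with care to ensure that the $G$-equivariant identification of isotypic components is preserved at each stage.
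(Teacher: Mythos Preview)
Your argument is correct and more careful than the paper's. The paper's two-line proof asserts that Proposition~\ref{prop:odd-nilp-strong-irrpr} produces a primitive in $\ker(\phi:F_{2n+1}\to G)$ itself and then applies Lemma~\ref{lem:regular-intermediate} once to $Y\to X$; you instead work one irreducible at a time, descend to the intermediate regular cover $Y/K$ where Proposition~\ref{prop:odd-nilp-strong-irrpr} actually places the primitive, apply Lemma~\ref{lem:regular-intermediate} there, and transfer back. Your detour is genuinely needed under the corollary's stated hypothesis: Proposition~\ref{prop:odd-nilp-strong-irrpr} only yields a primitive in the kernel of the map to a \emph{quotient}, and for instance when $G$ is the free $2$-step nilpotent exponent-$p$ group on three generators (whose center $[G,G]\iso(\Z/p)^3$ satisfies the hypothesis), Theorem~\ref{thm:burnside-consequences} shows the tautological $\phi:F_3\to G$ has no primitive in its kernel, so the paper's shortcut does not literally go through. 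Two minor points: $K=\ker(G\to H)$ need not be central in $G$ (Lemma~\ref{lem:cyclic center} may pass to further quotients), but you only use normality, which is automatic; and the transfer lemma recorded in \S\ref{sec:primitive}, though stated for cyclic $\langle g\rangle$, has a proof that works verbatim for any normal subgroup, so your ``main obstacle'' is not one.
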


\begin{proof}
  The assumption of the corollary together with
  Proposition~\ref{prop:odd-nilp-strong-irrpr} imply that some
  primitive element in $\pi_1(X)$ lifts to $Y$.  Applying
  Lemma~\ref{lem:regular-intermediate} gives the statement of the
  corollary.
\end{proof}

\section{Primitives in the kernel}
\label{sec:kernelprims}
In this section we explore criteria to detect if homomorphisms $\phi:F_n\to G$
of a free group to a finite group have primitive elements in the kernel.

As indicated in the introduction, finding $\phi$ that do not have
primitives in the kernel is a first step towards finding an example
where $\Irr(G) \neq \Irrpr(\phi,G)$. We will see that there are
various group-theoretic obstructions that prevent a group $G$ from
having such a map.

\subsection{Property $KC_i$}
Here we introduce the following notion, which is a simple (yet sometimes effective!)
tool to show that inductively constructed groups do not admit surjections without a 
primitive in the kernel.
\begin{definition}[{\bf Property ${\rm KC}_i$}]
\label{sec:KCi}
  Say that a finite group $G$ has property $\mathrm{KC}_i$
  (\emph{kernel contains corank $i$}) if for any surjection
  \[\phi: F_n \to G \]
  there is a free factor $F < F_n$ of rank at least $n-i$ contained in $\ker\phi$.
%  Say that $G$ has property $\mathrm{KR}_i$ if  for any surjection
%  \[\phi: F_n \to G \]
%  there is a free factor $F < F_n$ of rank at least $i$ contained in $\ker\phi$.
\end{definition}
We note the following easy consequence of this definition.

\begin{lemma}
\label{lem:properties-kc}
The following statements hold. 
  \begin{enumerate}
  \item Finite cyclic groups have $\mathrm{KC}_1$.
  \item Every finite group $G$ has $\mathrm{KC}_{|G|+1}$.
  \item If $K$ has $\mathrm{KC}_i$ and if  $Q$ has $\mathrm{KC}_j$, and if 
    \[ 1\to K \to G \to Q \to 1 \]
    is exact, then $G$ has $\mathrm{KC}_{i+j}$.
  \end{enumerate}
\end{lemma}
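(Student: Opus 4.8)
The plan is to recast $\mathrm{KC}_i$ in terms of generating tuples: a free factor $F<F_n$ of rank $n-i$ inside $\ker\phi$ is the same datum as an automorphism of $F_n$ after which at least $n-i$ basis elements map trivially under $\phi$; equivalently, the generating tuple $(\phi(x_1),\dots,\phi(x_n))$ can be carried by Nielsen moves (the tuple-level action of $\Aut(F_n)$) to one with at least $n-i$ trivial entries. Part (1) is then the Euclidean algorithm: for $G$ cyclic, $\phi$ is detected on $H_1(F_n;\Z)=\Z^n$ and is recorded by a row vector over $\Z/|G|$, and precomposing $\phi$ with an automorphism acts on this vector through the induced element of $\mathrm{GL}_n(\Z)$. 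Since $\Aut(F_n)$ surjects onto $\mathrm{GL}_n(\Z)$, integer column operations reduce (a lift of) the vector to $(d,0,\dots,0)$, where surjectivity forces $d$ to be a generator; in the new basis $y_2,\dots,y_n\in\ker\phi$ span a free factor of corank $1$. Part (2) is pigeonhole applied to the images $\phi(x_1),\dots,\phi(x_n)\in G$: whenever two basis elements satisfy $\phi(x_i)=\phi(x_j)\neq 1$, the move $x_j\mapsto x_jx_i^{-1}$ trivializes $x_j$ and changes no other image. Iterating, the process stops once the nontrivially-mapped basis elements have pairwise distinct images, of which there are at most $|G|-1$; the remaining basis elements span a free factor of corank at most $|G|-1\le|G|+1$ in $\ker\phi$, giving $\mathrm{KC}_{|G|+1}$ (indeed the stronger $\mathrm{KC}_{|G|-1}$).

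For (3) I would peel in two stages. Let $\pi:G\to Q$ be the quotient and $\psi=\pi\circ\phi:F_n\twoheadrightarrow Q$. Applying $\mathrm{KC}_j(Q)$ to $\psi$ (equivalently, running the $Q$-level Nielsen reduction and performing the same moves in $G$), I may assume after an automorphism that $x_{j+1},\dots,x_n\in\ker\psi=\phi^{-1}(K)$, so the last $n-j$ images lie in $K$. If these last $n-j$ images generated $K$, I could invoke $\mathrm{KC}_i(K)$ using only Nielsen moves among those coordinates to trivialize all but $i$ of them; together with the first $j$ entries this leaves at most $i+j$ nontrivial entries, and the surviving trivial basis elements span a free factor of corank at most $i+j$ in $\ker\phi$ (a free factor of a free factor is again a free factor). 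This yields $\mathrm{KC}_{i+j}(G)$.

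The main obstacle is exactly the surjectivity hypothesis buried in the previous sentence: $\mathrm{KC}_i(K)$ is a statement about surjections onto $K$, but the restricted images $\phi(x_{j+1}),\dots,\phi(x_n)$ need only generate some subgroup $K_1\le K$ (for instance, for $G=D_4$ with $K$ its Klein four-group the $Q$-reduction can leave $K_1$ of order two). The crux is therefore a normalization step: using further Nielsen moves that keep the last $n-j$ entries inside $K$, one enlarges $K_1$ to all of $K$. This is possible because the full tuple generates $G$ and $K\trianglelefteq G$: writing $T$ for the first $j$ entries, conjugating a late entry by an early one stays in $K$ and realizes the $\langle T\rangle$-conjugates of $K_1$, while forming words in $T$ that happen to land in $K$ and folding them into a spare late coordinate supplies the missing cosets. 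The Dedekind identity $K=\langle K_1^{\langle T\rangle}\rangle\cdot(\langle T\rangle\cap K)$ shows these two operations together generate $K$, and there is room to carry them out since we have $n-j>i\ge d(K)$ late coordinates (here $d(K)$ is the minimal number of generators of $K$). After this enlargement $\mathrm{KC}_i(K)$ applies verbatim. I expect this enlargement — equivalently, the insensitivity of $\mathrm{KC}_i$ to replacing $K$ by the image subgroup $K_1$ — to be the only genuinely delicate point; parts (1) and (2) are routine.
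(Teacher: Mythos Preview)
Your treatments of (1) and (2) are correct and are exactly the paper's arguments (it writes only ``the Euclidean algorithm'' for (1) and ``pigeonhole'' for (2)); your observation that pigeonhole actually yields the sharper $\mathrm{KC}_{|G|-1}$ is also right.

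For (3) the paper's proof is the single line ``free factors in free factors are free factors,'' and you have put your finger on a genuine gap in that line: after peeling off a rank--$(n-j)$ free factor $L$ mapping into $K$, the definition of $\mathrm{KC}_i$ applies only to \emph{surjections} onto $K$, and $\phi|_L$ need not be one (your $D_4$ example is exactly right). Your Dedekind identity $K=\langle K_1^{\langle T\rangle}\rangle\cdot(\langle T\rangle\cap K)$ is valid and isolates the right ingredients, but the normalization you sketch is looser than it first appears. Two points: (a) the transvections $x_\ell\mapsto x_\ell w$ with $w$ an early word landing in $K$ do not ``keep the late entries inside $K$'' at intermediate steps --- harmless, since only the final basis matters, but worth saying correctly; more seriously, (b) replacing a late entry can \emph{shrink} the subgroup the late entries generate (if that entry was essential for $K_1$), so ``there is room because $n-j>i\ge d(K)$'' is not by itself enough --- one would need spare late coordinates beyond a generating set for $K_1$, and $d(K_1)$ is not bounded by $d(K)$ in general.

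There is a much lighter way out. Your own proofs of (1) and (2) already establish the conclusion for \emph{arbitrary} homomorphisms $F_n\to G$, not merely surjections: subgroups of cyclic groups are cyclic, and the pigeonhole count depends only on $|\mathrm{im}\,\phi|\le|G|$. If one reads $\mathrm{KC}_i$ in this subgroup--hereditary sense, (3) is literally the one-liner the paper gives. Since the lemma's only use in the paper is Proposition~\ref{prop:prim-in-ker}, where the building blocks are cyclic and metacyclic groups (a subgroup-closed class) together with finitely many fixed finite quotients, this hereditary reading suffices for everything the paper needs. I would recast (3) that way and drop the normalization.
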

\begin{proof}
  \begin{enumerate}
  \item This is the Euclidean algorithm, as in the proof of the
    abelian case of Proposition~\ref{prop:nilpotent}.
  \item This is the pidgeonhole principle: among any $|G|+1$ elements in a free basis
    at least $2$ map to the same element in $G$; hence a Nielsen move can be applied
    to send one to the identity in $G$.
  \item This follows from the fact that free factors in free factors are free factors.
  \end{enumerate}
\end{proof}

\subsection{$p$-groups}
%\label{sec:nilpotent-kernelprims}

Recall that the  \emph{Frattini subgroup} $\Phi(G)$ of a group $G$ is defined to be the intersection of all proper maximal subgroups of $G$.  Elements of $\Phi(G)$ are often called \emph{non-generators}, since any set generating $G$ and containing such an element still generates $G$ without it. 

\begin{theorem}
\label{thm:burnside-consequences}
  Let $G$ be a $p$-group and let $F_n$ a free group with free basis $a_1,\ldots,a_n$. 
  Denote by $\pi:G \to G/\Phi(G)$ the projection map.
  Then
  \begin{enumerate}
  \item The kernel of a surjection $f:F_n \to G$ contains no primitive element
    if and only if $\{ \pi(f(a_i)), i=1,\ldots, n \}$ is a vector space basis of $G/\Phi(G)$.
  \item There is a surjection $f:F_n \to G$ whose kernel contains no primitive element
    if and only if $\dim_{\mathbb{F}_p}(G/\Phi(G)) = n$.
  \end{enumerate}
\end{theorem}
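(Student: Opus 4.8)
The plan is to reduce both statements to two standard inputs: the Burnside basis theorem, and the fact that primitive elements reduce to nonzero vectors mod $p$. Recall that for a $p$-group $G$ one has $\Phi(G)=G^p[G,G]$, so $V:=G/\Phi(G)$ is an elementary abelian $p$-group, i.e. an $\mathbb{F}_p$-vector space of dimension $d:=\dim_{\mathbb{F}_p}V$, and the Burnside basis theorem asserts that a subset of $G$ generates $G$ if and only if its image spans $V$ (equivalently, minimal generating sets of $G$ are exactly the preimages of bases of $V$, and all have $d$ elements). The first thing I would record is that the composite $\bar f:=\pi\circ f\colon F_n\to V$ factors through $H_1(F_n;\mathbb{F}_p)=\mathbb{F}_p^n$, yielding a linear map $L\colon\mathbb{F}_p^n\to V$ which is surjective whenever $f$ is (by Burnside). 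The key compatibility with primitivity is that a primitive element of $F_n$ abelianizes to a unimodular integer vector, hence reduces to a \emph{nonzero} vector in $\mathbb{F}_p^n$.

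For part (1) I would argue both directions through $L$. If $\{\pi(f(a_i))\}$ is a basis of $V$, then $n=d$ and $L$ is an isomorphism; any primitive $w$ has nonzero image in $\mathbb{F}_p^n$, so $\bar f(w)\neq 0$, i.e. $f(w)\notin\Phi(G)$ and in particular $f(w)\neq 1$, so $\ker f$ contains no primitive. Conversely, if the images are not a basis, they are linearly dependent (they span $V$ by Burnside), forcing $n>d$; after relabeling, a dependence lets me solve $\pi(f(a_n))=\sum_{i<n}c_i\,\pi(f(a_i))$ over $\mathbb{F}_p$. I then apply Nielsen moves, replacing $a_n$ by $b:=a_n\prod_{i<n}a_i^{-c_i}$ (a change of free basis), which arranges $f(b)\in\Phi(G)$ while $f(a_1),\dots,f(a_{n-1})$ still generate $G$ (their images still span $V$). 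Since $f(b)$ is then a word $f(W)$ in $f(a_1),\dots,f(a_{n-1})$, replacing $b$ by $bW^{-1}$ — still a basis element, hence primitive — produces a primitive element in $\ker f$.

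Part (2) should then follow formally from part (1): if $d=n$, a minimal generating set of $G$ defines a surjection $F_n\to G$ whose generator-images form a basis of $V$, so by part (1) its kernel contains no primitive; conversely, any surjection with no primitive in the kernel has, by part (1), generator-images forming a basis of $V$, forcing $\dim_{\mathbb{F}_p}(G/\Phi(G))=n$. The step I expect to be the main obstacle is the two-stage reduction inside part (1): the linear-algebra step that puts a basis element into $\Phi(G)$ is immediate, but upgrading ``lies in $\Phi(G)$'' to ``equals $1$'' is where the real content lies, and it is exactly here that one must use that the remaining $n-1$ generators still surject onto $G$, so that the stray element can be killed by a further Nielsen move without destroying primitivity. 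Care is also needed to confirm that each modification is genuinely an automorphism of $F_n$ and hence preserves primitivity.
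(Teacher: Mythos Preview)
Your proposal is correct and follows essentially the same route as the paper's proof: both reduce everything to the Burnside basis theorem, then in the converse of (1) perform a two-stage Nielsen reduction, first pushing a basis element into $\Phi(G)$ using a linear dependence in $V$, then killing it using that the remaining $n-1$ images still generate $G$. Your forward direction of (1) is phrased via ``primitives abelianize to unimodular vectors, hence are nonzero mod $p$'' where the paper instead argues that Nielsen moves induce elementary transformations on $V$ preserving the basis property; these are two ways of saying the same thing, and yours is arguably the more direct phrasing.
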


\begin{proof}
We will need the following classical result. 

\begin{lemma}[Burnside Basis Theorem]
  Let $p$ be a prime and suppose that $P$ is a $p$-group. Then $V = P/\Phi(P)$ is
  an $\mathbb{F}_p$--vector space, and :
  \begin{enumerate}
  \item A set $S \subset P$ generates $P$ if and only if its image in $V$ generates $V$.
  \item A set $S \subset P$ is a minimal generating set if and only if its image in $V$ is a
    vector space basis of $V$.
  \end{enumerate}
\end{lemma}

Recall that any primitive element $x_1 \in F_n$ can be
  extended to a free basis $x_1,\ldots, x_n$ (by definition), and any
  two free bases of $F_n$ are related by a sequence of Nielsen
  moves. As such, the images $\pi(f(x_i))$ are related to
  $\pi(f(a_i))$ by a sequence of elementary transformations. Since
  these preserve the property of being a basis, we conclude that $\{
  \pi(f(a_i)), i=1,\ldots, n \}$ is a vector space basis of
  $G/\Phi(G)$ if and only if $\{ \pi(f(x_i)), i=1,\ldots, n \}$ is a
  vector space basis of $G/\Phi(G)$ for every free basis $x_1, \ldots,
  x_n$ of $F_n$.
  
We now prove the first statement of the theorem.  If $\{ \pi(f(a_i)), i=1,\ldots, n \}$ is a vector space basis of $G/\Phi(G)$, then so 
    is $\{ \pi(f(x_i)), i=1,\ldots, n \}$ for any other free basis $x_i$. In particular, no $x_i$
    lies in the kernel of $f$. 

    Conversely, suppose that no primitive element lies in the kernel
    of $f$, but assume that $\{ \pi(f(a_i)), i=1,\ldots, n \}$ is not
    a vector space basis of $G/\Phi(G)$. Then we can assume, without
    loss of generality, that $\pi(f(a_1))$ is a linear combination of
    $\pi(f(a_i)), i > 1$. In particular, there is a word $w \in F_n$
    which only uses the letters $a_i, i>1$ so that $\pi(f(w)) = -
    \pi(f(a_1))$. Then, as $a_1w$ is primitive, there is a free basis
    $x_1,\ldots, x_n$ so that $\pi(f(x_n)) = 0$. In particular, by
    Claim 1 of the Burnside Basis Theorem, $f(x_1), \ldots, f(x_n)$ is
    not a minimal generating set of $G$ (it is a generating set since
    $f$ is surjective). However this again implies that (up to relabeling)
    $f(x_1) = f(v), v \in \langle x_2, \ldots, x_n\rangle$. Then $x_1v^{-1}$ is
    a primitive element in the kernel of $f$.
    
    The second statement of the theorem is a straightforward consequence of the first.
\end{proof}

\subsection{Connection to the Product Replacement Algorithm}
\label{sec:product-replacement}
The {\em Product Replacement Algorithm} is a common method used to
generate random elements in finite groups.  It is an active topic of
research; see \cite{Pak} or \cite{lub} for excellent surveys.  In this section we sketch a
connection between primitive elements in the kernel of a surjection
$\pi:F_n\to G$ and the product replacement algorithm for $G$.  It
seems quite likely that there are many more avenues for investigation
in this direction.

To begin, note that homomorphisms $\phi:F_n \to G$ are in
$1$--to--$1$--correspondence to $n$--tuples of elements in $G$.

A $n$--tuple $(g_1, \ldots, g_n)$ is called \emph{redundant} if there
is some $i$ so that $g_i$ is contained in the subgroup generated by 
$(g_1,\ldots,g_{-1},g_{i+1},\ldots, g_n)$.
\begin{lemma}
  If $(g_1, \ldots, g_n)$ is redundant, then the corresponding homomorphism
  $\phi:F_n \to G, \phi(a_i) = g_i$ has a primitive in the kernel. Here,
  $a_i$ is a free basis of $F_n$.
\end{lemma}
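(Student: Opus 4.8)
The plan is to unwind the redundancy hypothesis into an explicit relation and then produce a single primitive element lying in $\ker\phi$. By definition, saying $(g_1,\ldots,g_n)$ is redundant means there is an index $i$ with $g_i\in\langle g_j : j\neq i\rangle$. Since $\phi(a_j)=g_j$ for the free basis $a_1,\ldots,a_n$, this is the same as saying that there is a word $v$ in the letters $\{a_j : j\neq i\}$ (and their inverses), not involving $a_i$, such that $\phi(v)=g_i$. First I would fix such a word $v$.

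Next I would set $x := a_i v^{-1}$ and check that $x\in\ker\phi$: since $\phi$ is a homomorphism and $\phi(v)=g_i$, we get $\phi(x)=\phi(a_i)\phi(v)^{-1}=g_i g_i^{-1}=1$. The remaining---and only nontrivial---task is to verify that $x$ is primitive. For this I would observe that, because $v$ is a word in the generators other than $a_i$, the self-map of $F_n$ fixing each $a_j$ with $j\neq i$ and sending $a_i\mapsto a_i v^{-1}$ is an automorphism of $F_n$: it is a composition of elementary Nielsen moves (each right-multiplying one basis element by another generator or its inverse). Consequently $\{a_1,\ldots,a_{i-1},x,a_{i+1},\ldots,a_n\}$ is again a free basis of $F_n$, so $x$ is part of a free basis and hence primitive by definition.

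I do not anticipate a real obstacle in this argument; it is essentially identical to the construction appearing at the end of the proof of Theorem~\ref{thm:burnside-consequences}, where a primitive kernel element of the form $x_1 v^{-1}$ is produced. The one step deserving any care is the primitivity of $x$, which rests on the standard fact that right-multiplying a single generator by an arbitrary word in the remaining generators preserves the property of forming a free basis. One should also note the harmless typo in the definition of redundant (the index $g_{-1}$ should read $g_{i-1}$), so that the subgroup in question is genuinely $\langle g_1,\ldots,g_{i-1},g_{i+1},\ldots,g_n\rangle$.
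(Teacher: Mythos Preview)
Your argument is correct and is essentially identical to the paper's own proof: assume (after relabeling) that $\phi(a_1)\in\langle\phi(a_2),\ldots,\phi(a_n)\rangle$, pick a word $w$ in $a_2,\ldots,a_n$ with $\phi(w)=\phi(a_1)^{-1}$, and observe that $a_1w$ is a primitive element of $\ker\phi$. The only difference is cosmetic---you write $a_i v^{-1}$ and spell out the Nielsen-move justification of primitivity, which the paper leaves implicit.
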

\begin{proof}
  Assume without loss of generality that
  $\phi(a_1) \in \langle \phi(a_2), \ldots, \phi(a_n)\rangle$. Let $w$ be a 
  word in $a_2, \ldots, a_n$ so that $\phi(w) = \phi(a_1)^{-1}$. Then
  $a_1w$ is a primitive element in the kernel of $\phi$.
\end{proof}
The converse to this lemma is false -- for example, consider the 
map
\[ \phi:F_2 \to \Z/6\Z, \quad \phi(a_1) = 2, \phi(a_2) = 3. \]
The set $(2,3)$ is not redundant for $\Z/6\Z$, and yet $\phi$ contains
a primitive element in the kernel ($\phi(a_1(a_2a_1^{-1})^{-2}) = 0$).

To clarify the relation between redundant elements and primitives in the kernel
of homomorphisms, we need the following definition (see e.g. \cite{Pak}).
%We have the following lemma, which is proved with an argument exactly
%like the first part of the proof of
%Theorem~\ref{thm:burnside-consequences}.
%\begin{lemma}
%  Let $G$ be any group and let $n\geq 2$. There is a surjection $f:F_n \to G$ without a primitive element in the kernel  if and only if no $n$-element generating set $(g_1,\ldots, g_n)$ for $G$ is redundant.
%\end{lemma}

The \emph{Product Replacement Graph} $\Gamma_n(G)$ has a vertex for
each $n$--element generating sets of $G$, and an edge connecting two
generating sets that differ by a Nielsen move (multiply one of the
elements of the tuple on the left or on the right by another element
or its inverse). The \emph{extended Product Replacement Graph}
$\widetilde{\Gamma}_n(G)$ additionally has edges corresponding to inverting an
element, or swapping two.

The question as to whether $\Gamma_n(G)$ or $\widetilde{\Gamma}_n(G)$
is connected for various values of $n$ is an extremely challenging
question; see Section~2 of \cite{Pak} for a survey of known
results. For us, the importance comes from the following, which is proved
exactly like Theorem~\ref{thm:burnside-consequences}.
\begin{lemma}\label{lemma:walking-gamma}
  Let $\phi:F_n \to G$ be a surjective homomorphism, and
  $(\phi(a_1), \ldots, \phi(a_n))$ the corresponding $n$--tuple. Then
  $\phi$ has a primitive element in the kernel if and only if the
  connected component of $\widetilde{\Gamma}_n(G)$ containing
  $(\phi(a_1), \ldots, \phi(a_n))$ also contains a redundant tuple.
\end{lemma}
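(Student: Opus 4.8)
The plan is to exploit the standard dictionary between the combinatorics of $\widetilde{\Gamma}_n(G)$ and the automorphism group of $F_n$. First I would record the basic correspondence: a vertex of $\widetilde{\Gamma}_n(G)$ is a generating $n$--tuple, equivalently a surjection $F_n \to G$; the three types of edges (left or right multiplication, inversion, and swapping) are exactly the elementary Nielsen transformations, which generate $\Aut(F_n)$. Applying such a transformation to a free basis of $F_n$ produces another free basis, and taking $\phi$--images sends a free basis to a generating tuple since $\phi$ is surjective. Consequently, a path in $\widetilde{\Gamma}_n(G)$ starting at $(\phi(a_1),\ldots,\phi(a_n))$ is precisely the $\phi$--image of a sequence of free bases of $F_n$ obtained from $a_1,\ldots,a_n$ by an automorphism, and every intermediate tuple is automatically a legitimate vertex of the graph.

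For the forward direction, suppose $\phi$ has a primitive $p$ in its kernel. Extend $p = x_1$ to a free basis $x_1,\ldots,x_n$ of $F_n$. Since any two free bases are related by a sequence of Nielsen moves (the fact used in the proof of Theorem~\ref{thm:burnside-consequences}), the tuple $(\phi(x_1),\ldots,\phi(x_n))$ lies in the connected component of $(\phi(a_1),\ldots,\phi(a_n))$. But $\phi(x_1) = \phi(p) = 1$ lies in the subgroup generated by the remaining entries, so this tuple is redundant, producing the desired redundant vertex in the component.

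For the reverse direction, suppose the component of $(\phi(a_1),\ldots,\phi(a_n))$ contains a redundant tuple $(h_1,\ldots,h_n)$. The connecting path corresponds to an automorphism $\alpha \in \Aut(F_n)$ with $h_i = \phi(\alpha(a_i))$, so that $(h_1,\ldots,h_n)$ is exactly the $n$--tuple associated to $\phi\circ\alpha$. By the preceding lemma (redundant tuples have a primitive in the kernel), $\phi\circ\alpha$ has a primitive element $q$ in its kernel. Then $\alpha(q)$ is again primitive, since automorphisms preserve primitivity, and $\phi(\alpha(q)) = (\phi\circ\alpha)(q) = 1$, so $\alpha(q)$ is a primitive element of $\ker\phi$, as required.

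The only step requiring genuine care is the translation between paths in the graph and automorphisms of $F_n$: one must check that the three edge-types really do generate all of $\Aut(F_n)$ (the classical Nielsen generation theorem) and, conversely, that walking along edges never leaves the vertex set, i.e.\ each intermediate tuple still generates $G$. Both facts are immediate once the setup is phrased as above, which is exactly why this lemma follows in the same manner as Theorem~\ref{thm:burnside-consequences}; everything else is a direct application of the redundancy lemma together with the invariance of primitivity under $\Aut(F_n)$.
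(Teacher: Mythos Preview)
Your proof is correct and follows precisely the approach the paper intends: the paper does not spell out a proof but remarks that the lemma ``is proved exactly like Theorem~\ref{thm:burnside-consequences}'', and your argument does just that---extending a primitive to a free basis, using that any two free bases are related by elementary Nielsen transformations (which are exactly the edge-types of $\widetilde{\Gamma}_n(G)$), and invoking the preceding redundancy lemma for the converse.
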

\begin{corollary}
  Suppose that $\widetilde{\Gamma}_n(G)$ is connected. Then there is a
  surjection $f:F_n \to G$ with no primitive element in the kernel if
  and only if $G$ cannot be generated by fewer than $n$ elements.
\end{corollary}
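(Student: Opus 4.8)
The plan is to combine Lemma~\ref{lemma:walking-gamma} with the connectedness hypothesis in order to strip away all dependence on the particular homomorphism $\phi$, reducing the assertion to a purely combinatorial fact about generating tuples. Throughout I will assume that $G$ can be generated by $n$ elements, so that surjections $\phi:F_n\to G$ exist and $\widetilde{\Gamma}_n(G)$ is nonempty; if $G$ needs more than $n$ generators there are no such surjections and the statement degenerates, so this is the only case of interest.

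First I would exploit connectedness. Recall that surjections $\phi:F_n\to G$ correspond exactly to the vertices of $\widetilde{\Gamma}_n(G)$, namely the $n$-element generating sets of $G$. Since $\widetilde{\Gamma}_n(G)$ is connected, the connected component of any vertex is the entire graph. Lemma~\ref{lemma:walking-gamma} then says that for any surjection $\phi$, the kernel of $\phi$ contains a primitive element if and only if $\widetilde{\Gamma}_n(G)$ contains a redundant tuple — a condition that no longer refers to $\phi$. Hence either every surjection has a primitive in its kernel or none does, and consequently there is a surjection with no primitive in its kernel if and only if $\widetilde{\Gamma}_n(G)$ contains \emph{no} redundant tuple.

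The second step is to identify ``$\widetilde{\Gamma}_n(G)$ has no redundant tuple'' with ``$G$ cannot be generated by fewer than $n$ elements.'' For the forward direction, a redundant generating $n$-tuple $(g_1,\ldots,g_n)$ satisfies, after relabeling, $g_1\in\langle g_2,\ldots,g_n\rangle$, so $g_2,\ldots,g_n$ already generate $G$ and exhibit $G$ as generated by $n-1$ elements. For the converse, if $G$ is generated by some $k<n$ elements $h_1,\ldots,h_k$, then the padded tuple $(h_1,\ldots,h_k,1,\ldots,1)$ is an $n$-element generating set, hence a vertex of $\widetilde{\Gamma}_n(G)$, and it is redundant because its last entry lies in the subgroup generated by the others. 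Combining the two steps yields the corollary.

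I expect the only delicate points to be the bookkeeping around degenerate cases rather than any substantive difficulty: one must check that the padded tuple is genuinely a vertex of $\widetilde{\Gamma}_n(G)$ (an honest $n$-element generating set, which holds once $k<n$) and genuinely redundant, and one should record the standing assumption that $G$ is $n$-generated so that at least one surjection exists. The conceptual content lies entirely in the first step, where connectedness collapses the per-$\phi$ criterion of Lemma~\ref{lemma:walking-gamma} into a single graph-theoretic condition.
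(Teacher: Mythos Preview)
Your proposal is correct and follows exactly the route the paper intends: the corollary is stated without proof immediately after Lemma~\ref{lemma:walking-gamma}, and the subsequent corollary's proof (``Since $G$ can be generated by $k$ elements, there is a redundant generating $n$--tuple. Hence, we are done by Lemma~\ref{lemma:walking-gamma}'') confirms that the argument is precisely connectedness plus the existence/nonexistence of a redundant tuple. Your two-step breakdown is the same idea, only spelled out in more detail.
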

In particular, connectivity results are known for solvable groups and one obtains the following Corollary of a theorem of Dunwoody \cite[Theorem~2.3.6]{Pak}
\begin{corollary}
  Suppose that $G$ is finite and solvable, and can be generated by $k$
  elements. Then, for any $n>k$ every surjection $f:F_n\to G$ has a
  primitive element in the kernel.
\end{corollary}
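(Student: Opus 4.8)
The plan is to deduce the statement directly from the preceding Corollary together with Dunwoody's connectivity theorem \cite[Theorem~2.3.6]{Pak}, so that essentially no new combinatorial work is needed. Let $d(G)$ denote the minimal number of generators of $G$. The preceding Corollary already reduces the \emph{existence} of a surjection $f:F_n\to G$ with no primitive in the kernel to two hypotheses: that $\widetilde{\Gamma}_n(G)$ is connected, and that $G$ cannot be generated by fewer than $n$ elements. The strategy is to guarantee connectivity using solvability and Dunwoody, and then to observe that the second hypothesis \emph{fails} under the assumption $n>k$, forcing every surjection to have a primitive in the kernel.

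First I would record that, since $G$ can be generated by $k$ elements, we have $d(G)\leq k$, so the hypothesis $n>k$ gives $n>d(G)$. Next I would invoke Dunwoody's theorem, which asserts that for a finite solvable group $G$ the extended product replacement graph $\widetilde{\Gamma}_n(G)$ is connected for all $n>d(G)$; combined with the previous step this yields connectivity of $\widetilde{\Gamma}_n(G)$ throughout our range. Finally I would apply the preceding Corollary in its contrapositive form: because $\widetilde{\Gamma}_n(G)$ is connected and $G$ is generated by $k<n$ elements (so it \emph{can} be generated by fewer than $n$ elements), there is no surjection $f:F_n\to G$ with empty primitive kernel. Equivalently, every surjection $f:F_n\to G$ has a primitive element in its kernel, which is exactly the assertion of the Corollary.

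Since the argument is purely an assembly of earlier results, there is no genuinely hard step; the only points requiring care are citing Dunwoody's theorem in precisely the form ``$\widetilde{\Gamma}_n(G)$ is connected for $n>d(G)$'' (rather than at the borderline value $n=d(G)$, where connectivity can fail), and verifying that the inequalities $d(G)\leq k<n$ place us simultaneously in the range where both the connectivity result and the ``$G$ can be generated by fewer than $n$ elements'' clause of the preceding Corollary apply. Once these are pinned down, the conclusion is immediate.
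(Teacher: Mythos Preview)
Your proposal is correct and essentially identical to the paper's proof: both invoke Dunwoody's connectivity theorem for $n\geq d(G)+1$ and then use the redundancy/connectivity criterion to conclude. The only cosmetic difference is that you route through the preceding Corollary while the paper cites Lemma~\ref{lemma:walking-gamma} directly (and Dunwoody is usually stated for $\Gamma_n(G)$ rather than $\widetilde{\Gamma}_n(G)$, though the latter follows immediately).
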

\begin{proof}
  By Dunwoody's theorem, $\Gamma_n(G)$ is connected as $n \geq d(G)+1$.
  Since $G$ can be generated by $k$ elements, there is a redundant generating
  $n$--tuple. Hence, we are done by Lemma~\ref{lemma:walking-gamma}.
\end{proof}

We emphasize that the conclusion of this corollary is really
nontrivial. The fact that $G$ can be generated by fewer than $n$
elements does not necessarily imply that \emph{every} $n$--element
generating set is redundant (compare again the example of $\Z/6\Z$ above). 
This discrepancy is exactly recorded by
the Product Replacement Graph.

\smallskip If the minimal size of a generating set for $G$ is $n$,
connectivity of $\Gamma_n(G)$ is particularly delicate. This seems to
be the most interesting scenario from the point of view of trying to
obtain surjections without primitives in the kernel.

\section{Free Linear Actions}
\label{sec:free-actions}

In this section we discuss groups which act freely and linearly on spheres.
The importance of such groups to our goal stems from the following.
\begin{lemma}
  Suppose that $G$ acts freely and linearly on some sphere. Then for the
  cover $Y\to X$ of a $n$-petal rose $X$ defined by $\phi:F_n \to G$ we
  have
  \[ H_1^\mathrm{prim}(Y;\C) = H_1(Y;\C) \quad\Leftrightarrow\quad \ker(\phi)\,\mbox{contains a primitive element} \]
\end{lemma}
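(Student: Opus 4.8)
The plan is to prove the two implications separately and to recognize this Lemma as the combination of the implication chain displayed in the introduction with Observation~\ref{obs:freely-acting}; no essentially new argument is needed, since every ingredient has been established above.

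First I would dispatch the reverse implication. If $\ker(\phi)$ contains a primitive element $\gamma$, then the primitive loop representing $\gamma$ lifts with degree $1$ to $Y$, as $\phi(\gamma)=1$. Lemma~\ref{lem:regular-intermediate} then gives $H_1^{\mathrm{prim}}(Y;\C)=H_1(Y;\C)$ at once. This direction uses nothing about free actions on spheres; it holds for every finite regular cover, and so it is the trivial half.

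For the forward implication I would chain two earlier results. Assume $H_1^{\mathrm{prim}}(Y;\C)=H_1(Y;\C)$. Then Theorem~\ref{theorem:restriction} (equivalently, the second implication of the boxed chain in the introduction) yields $\Irrpr(\phi,G)=\Irr(G)$: the containment in Theorem~\ref{theorem:restriction} forces every irreducible occurring in $H_1(Y;\C)$ to lie in $\Irrpr(\phi,G)$, and by the Chevalley--Weil formula \eqref{eq:cw:graphs} every irreducible does occur. At this point the free linear action enters, but only through Observation~\ref{obs:freely-acting}: for a group acting freely and linearly on a sphere, $\Irrpr(\phi,G)=\Irr(G)$ forces $\ker(\phi)$ to contain a primitive element. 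Concatenating the two implications completes the forward direction.

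The sole step with genuine content is the appeal to Observation~\ref{obs:freely-acting}, so I would keep its mechanism in view. A free linear action on a sphere supplies a complex representation of $G$ on which no nontrivial element has a nonzero fixed vector, and each of its complex irreducible constituents $W$ inherits this fixed-point-free property. If such a $W$ lies in $\Irrpr(\phi,G)$, then $\phi(\gamma)w=w$ for some primitive $\gamma$ and some nonzero $w\in W$, whence $\phi(\gamma)=1$ and $\gamma\in\ker(\phi)$. Since $\Irrpr(\phi,G)=\Irr(G)$ places this $W$ in $\Irrpr(\phi,G)$, the desired primitive in the kernel is produced. The upshot is that for these groups the three conditions ``$\ker(\phi)$ contains a primitive'', ``$H_1^{\mathrm{prim}}(Y;\C)=H_1(Y;\C)$'', and ``$\Irrpr(\phi,G)=\Irr(G)$'' are mutually equivalent, and the Lemma records the equivalence of the first two. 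I anticipate no real obstacle; the only point requiring care is to invoke the free-action hypothesis exactly once, in the implication $\Irrpr(\phi,G)=\Irr(G)\Rightarrow\ker(\phi)$ contains a primitive, and nowhere else.
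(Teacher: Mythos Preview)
Your proposal is correct and matches the paper's proof essentially line for line: the reverse direction is Lemma~\ref{lem:regular-intermediate}, and the forward direction combines Theorem~\ref{theorem:restriction} (plus Chevalley--Weil to know every irreducible appears in $H_1(Y;\C)$) with the fixed-point-free irreducible constituent furnished by the free linear action, exactly as in Observation~\ref{obs:freely-acting}. The only cosmetic difference is that the paper phrases the nontrivial direction contrapositively (no primitive in $\ker\phi \Rightarrow V\notin\Irrpr(\phi,G) \Rightarrow H_1^{\mathrm{prim}}\neq H_1$), whereas you run the same chain forwards.
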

\begin{proof}
  Suppose that $\ker(\phi)$ does not contain a primitive
  element. Then, by assumption on $G$, there is a irreducible
  representation $V$ of $G$ where no element $1 \neq g$ fixes any
  vector. Hence, $V \neq \Irrpr(\phi, G)$, and therefore
  $H_1^\mathrm{prim}(Y;\C) \neq H_1(Y;\C)$ by
  Theorem~\ref{theorem:restriction}. The other direction is immediate
  from Lemma~\ref{lem:regular-intermediate}.
\end{proof}

Using the methods developed above we can show that (at least for large $n$)
this does indeed always occur. Namely, we have the following.
%In light of Theorem\ref{theorem:restriction}, one way of searching for examples
%of covers $Y\to X$ with $H_1^{\mathrm{prim}}(Y;\C) \subsetneq H_1(Y;\C)$ is to find groups $G$ and surjections $\phi:\pi_1(X)\to G$ with $\Irrpr(\phi,G)\subsetneq\Irr(G)$.  In this section we show how finite groups $G$ that act freely on spheres relate to this problem.  

\begin{proposition}\label{prop:prim-in-ker}
  There is a number $B\geq3$ with the following property. Suppose that $G$ is 
  any group which admits a (complex) representation in which no non-identity
  element has a fixed vector. If $n\geq B$ and $\phi:F_n \to G$ is any surjection, then the
  kernel of $\phi$ contains a primitive element in $F_n$. In other words, $\Irr=\Irrpr$ for
  all such groups, and $H_1^{\mathrm{prim}}(Y;\C) = H_1(Y;\C)$ for the associated covers.
\end{proposition}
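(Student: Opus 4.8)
The plan is to combine the structure theory of finite groups admitting a fixed-point-free complex representation with the product-replacement results of \S\ref{sec:product-replacement}. First I would observe that the hypothesis forces us into the finite case: a representation in which no $g\neq 1$ fixes a nonzero vector is in particular faithful, so $G$ embeds in some $\mathrm{GL}(V)$ and acts freely on the unit sphere of $V$, and in the covering interpretation $G$ is the deck group of a finite cover. (The statement genuinely fails for infinite $G$ such as free subgroups of $\mathrm{SU}(2)$, where every non-identity element already acts without fixed vectors.) Thus $G$ is a finite group acting freely and linearly on a sphere, so the classification of such groups (\cite{W}, \cite{DM}, \cite{N}) applies. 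The structural input I would extract is that every such $G$ has a normal metacyclic subgroup $A$ with $|G/A|$ bounded by a universal constant; hence the minimal number of generators satisfies $d(G)\leq d(A)+d(G/A)\leq d_0$ for a universal $d_0$ independent of $G$. Setting $B:=\max(3,d_0+1)$, the problem reduces via Lemma~\ref{lemma:walking-gamma} to showing that $\widetilde{\Gamma}_n(G)$ is connected for $n\geq B$: since then $n>d(G)$, there is a redundant generating $n$-tuple, and connectivity places every generating $n$-tuple in a component containing a redundant one, producing a primitive element of $\ker(\phi)$.

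For the solvable groups in the family this is immediate. Each is generated by at most $d_0<n$ elements, so the corollary of Dunwoody's theorem proved above applies verbatim: $\Gamma_n(G)$ is connected and contains a redundant tuple, whence every surjection $F_n\to G$ has a primitive in its kernel. Thus the single constant $B$ already works for all solvable $G$ in the class, and the cohomological conclusions $\Irr(G)=\Irrpr(\phi,G)$ and $H_1^{\mathrm{prim}}(Y;\C)=H_1(Y;\C)$ follow from Theorem~\ref{theorem:restriction} and Lemma~\ref{lem:regular-intermediate}.

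The non-solvable groups are the main obstacle, since Dunwoody's theorem does not cover them. Here I would use the classification fact that the only nonabelian composition factor occurring is $A_5$: each such $G$ has a solvable (in fact metacyclic) normal subgroup $A$ with $G/A$ lying in a fixed finite list of groups built from $\mathrm{SL}(2,5)$. The strategy is to lift connectivity across the extension $1\to A\to G\to G/A\to 1$: first establish connectivity of $\widetilde{\Gamma}_n$ for each of the finitely many possible quotients $G/A$, which are fixed small groups for which the needed connectivity in the relevant range of $n$ is known or directly verifiable, and then exploit the solvability of $A$ to promote a connecting sequence of Nielsen moves in $G/A$ to one in $G$, cleaning up the $A$-part by a Euclidean/Dunwoody argument exactly as in the solvable case. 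The crux, and the step I expect to be delicate, is precisely this lifting: one must control the interaction of Nielsen moves with the extension uniformly over the infinitely many $G$ in the family, even though the metacyclic part $A$ has unbounded order, so that a single threshold $B$ suffices for all of them at once. Once $\widetilde{\Gamma}_n(G)$ is shown connected for $n\geq B$, Lemma~\ref{lemma:walking-gamma} completes the argument and the conclusions $\Irr(G)=\Irrpr(\phi,G)$ and $H_1^{\mathrm{prim}}(Y;\C)=H_1(Y;\C)$ follow as before.
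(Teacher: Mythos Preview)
Your approach diverges from the paper's, and the divergence creates a real gap precisely where you flag it. For the solvable groups in the classification your Dunwoody argument is fine and matches the corollary already proved in \S\ref{sec:product-replacement}. But for the non-solvable groups you reduce to a statement about connectivity of $\widetilde{\Gamma}_n(G)$ that you do not prove: lifting Nielsen-connectivity across an extension with non-solvable quotient is a genuinely hard problem, and ``clean up the $A$-part by a Euclidean/Dunwoody argument'' is not a proof. You are right to be uneasy about this step; as written the argument is incomplete, and there is no obvious way to patch it within the product-replacement framework. A minor structural point as well: the classification does \emph{not} give $|G/A|$ universally bounded --- the quotient $G/N$ in the paper's formulation can itself be an arbitrary cyclic-by-cyclic group --- though your conclusion $d(G)\leq d_0$ survives since $d$ of a metacyclic group is at most $2$.

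The paper avoids product-replacement connectivity altogether by using the $\mathrm{KC}_i$ machinery of \S\ref{sec:KCi} (Lemma~\ref{lem:properties-kc}). The point is that $\mathrm{KC}_i$ is additive under extensions: if $N$ has $\mathrm{KC}_i$ and $G/N$ has $\mathrm{KC}_j$, then $G$ has $\mathrm{KC}_{i+j}$. Cyclic groups have $\mathrm{KC}_1$, so any cyclic-by-cyclic group has $\mathrm{KC}_2$; and any fixed finite group $Q$ has $\mathrm{KC}_{|Q|+1}$ by pigeonhole. The classification gives a normal metabelian $N\lhd G$ with $G/N$ either cyclic-by-cyclic or one of four specific groups $Q_1,\ldots,Q_4$, so $G$ has $\mathrm{KC}_{2+\max(2,\max_i|Q_i|+1)}$, a number independent of $G$. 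This handles the solvable and non-solvable cases uniformly and never touches $\widetilde{\Gamma}_n$. The lesson: connectivity of the product-replacement graph is strictly stronger than what is needed here, and the weaker $\mathrm{KC}_i$ property --- which propagates trivially through extensions --- is the right tool.
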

The proof of this ultimately relies on the classification of finite groups which act 
freely and linearly on spheres. 
\begin{proof}[Proof of Proposition~\ref{prop:prim-in-ker}]
  Let $G$ be as in the proposition.  By the classification of finite
  groups acting freely linearly on spheres (compare page 233 of
  \cite{DM} for the definition of types, and Theorem 1.29 to restrict
  the quotients in cases V, VI), there is a normal subgroup $N<G$ with
  the following properties:
  \begin{enumerate}
  \item $N$ is metabelian, i.e. an split extension of a cyclic by a cyclic group.
  \item $G/N$ is either an extension of a cyclic by a cyclic group, or equal to one
    out of a list of four possible finite groups $Q_1, \ldots, Q_4$.
  \end{enumerate}
  Thus the existence of the number $B$ as desired follows from Lemma~\ref{lem:properties-kc}.
\end{proof}

\begin{remark}
  While the proof of Proposition~\ref{prop:prim-in-ker} given above is nonconstructive,
  we expect the minimal value for $B$ to be fairly low (in fact, $3$ or $4$). Computer experiments
  (see below) are in agreement with this.
\end{remark}

\section{Algorithms}
\label{sec:algorithms}
In this section we explain that the question if $\Irr(G) = \Irrpr(\phi,G)$ for
any given finite group $G$ and surjection $\phi:F_n\to G$ is algorithmic, supposing that the
irreducible representations are understood. The main ingredient here
is the following:

\begin{proposition}
\label{prop:alg}
  Given a homomorphism $q:F_n \to G$ of a free group to a finite group $G$, there
  is an algorithm which computes the set of all elements in $G$ which are the image of 
  primitive elements in $F_n$ in finite time.
\end{proposition}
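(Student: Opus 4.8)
The plan is to reduce the computation of an a priori infinite set---the images under $q$ of all primitive elements of $F_n$---to a breadth-first search on the finite set $G^n$. The starting point is the classical fact that $\Aut(F_n)$ acts transitively on the primitive elements of $F_n$ (the $\Aut$-version of the transitivity used in Step~1 of the proof of Proposition~\ref{prop:rep-lift-of-prim}): every primitive element of $F_n$ has the form $\alpha(a_1)$ for some $\alpha\in\Aut(F_n)$, and conversely every such element is primitive. Consequently the set we wish to compute is exactly
\[ \{\, q(\alpha(a_1)) : \alpha\in\Aut(F_n)\,\}. \]

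The key observation is that $q(\alpha(a_1))$ depends only on how $\alpha$ transforms the tuple of generator-images. Identifying a homomorphism $F_n\to G$ with its associated tuple $(q(a_1),\ldots,q(a_n))\in G^n$, precomposition $q\mapsto q\circ\alpha$ defines a (right) action of $\Aut(F_n)$ on $G^n$, and $q(\alpha(a_1))$ is precisely the first coordinate of the tuple corresponding to $q\circ\alpha$. First I would fix the finite Nielsen generating set of $\Aut(F_n)$ (elementary transvections $a_i\mapsto a_ia_j^{\pm1}$, inversions $a_i\mapsto a_i^{-1}$, and coordinate permutations), taking the set to be symmetric so that the inverse of each generator is again among the moves. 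Each of these acts on $G^n$ by an explicit, computable formula: a transvection replaces the entry $g_i$ by $g_ig_j^{\pm1}$ and fixes the others, an inversion replaces $g_i$ by $g_i^{-1}$, and a permutation permutes the entries. I would then compute the orbit of the initial tuple $(q(a_1),\ldots,q(a_n))$ under these generators. Because the generating set is symmetric, this orbit is exactly the connected component of the initial tuple in the finite graph on $G^n$ whose edges are the generator-moves---the full-tuple analogue of the extended Product Replacement Graph $\widetilde{\Gamma}_n(G)$ of \S\ref{sec:product-replacement}. A breadth-first search starting from the initial tuple, applying each generator at every discovered vertex and recording new tuples, computes this component and terminates because $|G^n|=|G|^n$ is finite. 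The desired subset of $G$ is finally read off as the set of first coordinates occurring among the tuples in the orbit (equivalently, since coordinate permutations lie in $\Aut(F_n)$, the union of all coordinates).

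The real content of the argument, and the only step requiring care, is justifying the reduction of the infinite problem to this finite orbit computation. Concretely one must verify that $\Aut(F_n)$ is generated by the chosen finite set of Nielsen transformations (Nielsen's theorem), that the induced precomposition action on $G^n$ is well defined and that the listed moves implement the generators correctly, and that $\{q(\alpha(a_1)):\alpha\in\Aut(F_n)\}$ is captured exactly by the first coordinates appearing in the orbit. Once these are in place there is no further obstacle: the finiteness of $G^n$ guarantees that the search halts, so the proposition's requirement of a finite-time algorithm is met with no complexity analysis needed.
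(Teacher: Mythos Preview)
Your proposal is correct and follows essentially the same approach as the paper: both reduce the problem to computing the $\Aut(F_n)$-orbit of the tuple $(q(a_1),\ldots,q(a_n))$ in the finite set $G^n$ by iteratively applying Nielsen moves until no new tuples appear, and then reading off all coordinates of the resulting tuples. Your write-up is somewhat more explicit about the well-definedness of the action and the termination argument, but the algorithm and its justification are the same.
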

\begin{proof}
  To begin, note that every primitive element in $F_n$ is the image of the first standard
  free generator of $F_n$ under some element of $\Aut(F_n)$. Additionally, $\Aut(F_n)$ 
  is generated by Nielsen moves. Thus suggests the following algorithm:
  \begin{itemize}
  \item Start with a set $L$ of elements in $G^n$, containing the images of the standard basis
    $q(a_1),\ldots, q(a_n)$ under $q$.
  \item For every element in $L$, apply all basic Nielsen moves to the tuple. Add all
    resulting tuples to $L$ (unless they were already contained in $L$).
  \item If $L$ grew in size during the last step, repeat the last step.
  \item The desired set of images is now the set of all entries of tuples in $L$.
  \end{itemize}
  To see that the resulting list actually contains every image of a
  primitive, note that if $g=q(x)$ for some primitive, then there is
  some sequence $M_1\to M_2\to \dots \to M_k$ of Nielsen moves of the
  standard free basis $a_1,\ldots, a_k$ ending in a basis
  $x_1,\ldots,x_k$ containing $x$. Now observe that by construction
  every image of $q(a_1),\ldots, q(a_k)$ under any sequence of moves
  $M_i$ is contained in $L$; thus $q(x)$ is one of the entries of a
  tuple in $L$.
\end{proof}
We stress that we do not claim that the algorithm is particularly fast. Also note that
\cite{CG} contains a different algorithm that works for non-regular covers, but seems
harder to implement in practice.

\smallskip
For groups as discussed in Section~\ref{sec:free-actions} experiments with the 
above algorithm show that already for a free group of rank $3$, any surjection 
contains a primitive element for all groups of order at most $~1000$ that can act
freely on spheres.

\smallskip
With Proposition~\ref{prop:alg} in hand, the claim made at the beginning of this
section is immediate: given $\phi:F_n\to G$ one can first compute the images of
all primitive elements of $F_n$ under $\phi$ in $G$. For each irreducible representation
one can then check if any of the associated matrices have eigenvalue $1$.

\section{Rank $2$ examples}
\label{sec:rank-2-examples}
In this section we give various examples with $\rank(\pi_1(X))=2$ and where $\Irrpr(\phi,G)\subsetneq\Irr(G)$ and $H_1^{\mathrm{prim}}(Y;\C)\subsetneq H^1(Y;\C)$.  These show in particular that our results assuming $\rank(\pi_1(X))\geq 3$ are sharp.

\subsection{A group acting freely on a sphere}

\begin{proposition}\label{prop:examples-primitive}
  There are surjections $\phi:F_2 \to G$ with the following two properties:
  \begin{enumerate}[i)]
  \item The group $G$ acts freely and linearly on a sphere $S^N$.
  \item No primitive element of $F_2$ is contained in the kernel of $\phi$.
  \end{enumerate}
  In particular, for the associated cover we have $H_1^{\mathrm{prim}}(Y;\C)\neq H^1(Y;\C)$. 
\end{proposition}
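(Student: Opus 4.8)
The plan is to exhibit one explicit group $G$ together with a surjection $\phi:F_2\to G$ realizing both properties (i) and (ii), and then to deduce the final ``in particular'' clause formally. First I would note that the last sentence is immediate once (i) and (ii) hold: by Observation~\ref{obs:freely-acting}, a group $G$ acting freely and linearly on a sphere satisfies $\Irrpr(\phi,G)=\Irr(G)$ precisely when $\ker(\phi)$ contains a primitive, so property (ii) forces $\Irrpr(\phi,G)\subsetneq\Irr(G)$. Choosing an irreducible $V\in\Irr(G)\setminus\Irrpr(\phi,G)$ (necessarily nontrivial) and applying Theorem~\ref{theorem:restriction}, $V$ cannot occur in $H_1^{\mathrm{prim}}(Y;\C)$, whereas by the Chevalley--Weil formula \eqref{eq:cw:graphs} it occurs in $H^1(Y;\C)$ with positive multiplicity; hence $H_1^{\mathrm{prim}}(Y;\C)\subsetneq H^1(Y;\C)$.

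It then remains to produce $G$ and $\phi$. I would take $G=Q_8$, the quaternion group of order $8$. For property (i), recall that $Q_8$ has a faithful $2$-dimensional complex irreducible representation in which no non-identity element has eigenvalue $1$: the central involution acts as $-I$, while every other non-identity element has order $4$ and acts with the two primitive fourth roots of unity as eigenvalues. The unit sphere of $\C^2$ is thus a copy of $S^3$ on which $Q_8$ acts freely and linearly. For property (ii), I would use that $Q_8$ is a $2$-group with Frattini quotient $Q_8/\Phi(Q_8)\cong(\mathbb{F}_2)^2$, so $\dim_{\mathbb{F}_2}(Q_8/\Phi(Q_8))=2$. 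By part (2) of Theorem~\ref{thm:burnside-consequences} there is then a surjection $\phi:F_2\to Q_8$ whose kernel contains no primitive; concretely, sending a free basis $\{a,b\}$ to the standard generators of $Q_8$ works, since their images in the Frattini quotient form a basis, and part (1) of Theorem~\ref{thm:burnside-consequences} rules out any primitive in the kernel.

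The $2$-group route makes property (ii) essentially automatic, so there is no genuine obstacle for $Q_8$ itself; the only point requiring care is the book-keeping that the missing irreducible (here the $2$-dimensional one) really does contribute to $H^1(Y;\C)$, which is handled by Chevalley--Weil as above. The substantive difficulty enters only if one instead wants examples of sphere-acting groups that are not $p$-groups, where the Burnside Basis Theorem no longer trivializes (ii), or if one wants to certify (ii) for a group found by an unstructured search. There the obstacle is precisely verifying that no primitive lies in the kernel, since a priori this concerns the infinite set of all primitive elements of $F_2$; this is where the algorithm of Proposition~\ref{prop:alg}, which enumerates the images in $G$ of all primitives via Nielsen moves, becomes the essential tool. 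I expect that verification, rather than the construction of $G$, to be the main cost, and it is the reason a computer-assisted check is natural for examples beyond the clean $Q_8$ case.
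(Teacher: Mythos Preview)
Your argument is correct and in fact cleaner than the paper's. The paper exhibits the non--$p$-group $G=\langle a,b\mid a^3=1,\ b^8=1,\ bab^{-1}=a^2\rangle$ of order $24$ from Nakaoka's list and then certifies property~(ii) by running the algorithm of Proposition~\ref{prop:alg}; it does not invoke Theorem~\ref{thm:burnside-consequences} at all. By choosing $Q_8$ instead, you exploit that it is a $2$-group with Frattini quotient $(\mathbb{F}_2)^2$, so Theorem~\ref{thm:burnside-consequences} makes (ii) automatic and no computer check is needed; property~(i) is classical for $Q_8$. Your deduction of the final clause via Observation~\ref{obs:freely-acting}, Theorem~\ref{theorem:restriction}, and Chevalley--Weil is the intended one. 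The trade-off is exactly what you identify: the $p$-group route gives an elementary, self-contained example, while the paper's choice illustrates that the phenomenon is not confined to $p$-groups, at the cost of an algorithmic verification.
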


\begin{proof}
Consider the group $G$ defined by the relations
\[ G = \langle a, b | a^3=1, b^8=1, bab^{-1}=a^2 \rangle \] 
This group appears as Type i), with parameters $n = 8, m = 3, r = 2$
in the list in \cite{N}, and hence acts linearly and freely on some sphere.

Using the algorithm described in Section~\ref{sec:algorithms}, one can check that
the map $q:F(a,b)\to G$ has no primitive element in the kernel. Hence $H_1^{\mathrm{prim}}(Y;\C)\neq H^1(Y;\C)$ for the associated cover. 
\end{proof}

\subsection{A $2$-step nilpotent group}

We now give examples which show that the obstruction given by $\Irrpr(\phi,G)$
is indeed stronger than requiring that $G$ act linearly and freely on spheres.
\begin{proposition}\label{prop:examples-primitive2}
  There is a surjection $\phi:F_2 \to G$, and a representation $\rho$ of
  $G$ with the following properties:
  \begin{enumerate}[i)]
  \item No primitive element of $F_2$ is contained in the kernel of $\phi$.
  \item For every primitive element $x\in F_2$, the matrix $\rho(\phi(x))$ does not have
    eigenvalue $1$.
  \item There is some some $y\in F_n$, so that $\rho(\phi(y))$ is the identity.
  \end{enumerate}
  In particular, we have $\Irrpr(\phi, G) \neq \Irr(G)$ and 
  for the associated cover we have $H_1^{\mathrm{prim}}(Y;\C)\neq H^1(Y;\C)$,
  yet $G$ does not act freely and linearly on a sphere via $\rho$.
\end{proposition}

\begin{proof}
Consider the $2$-step nilpotent group $\Gamma$ which fits into the sequence
\[ 1 \to \Z/2\Z \to \Gamma \to \Z/4\Z \times \Z/4\Z \to 1\]
and is obtained from the canonical mod-$4$, $2$-step nilpotent quotient of $F_2$ by taking 
a further rank $2$ quotient of the center.

Thus, there is a surjection $\phi:F_2 \to \Gamma$ which induces the mod-$4$-homology
map $F_2\to H_1(F_2;\Z/4\Z)$ in abelianizations. $\Gamma$ is generated by
two elements $\alpha,\beta$ which are images of a free basis $a,b$ of $F_2$.

Consider the representation
\[ \rho: \Gamma \to GL(\C^2) \]
given by 
\[ \alpha \mapsto
\begin{pmatrix}
  i & 0 \\ 0 & -i
\end{pmatrix}, \quad \beta\mapsto
\begin{pmatrix}
  0 & 1 \\ -1 & 0 
\end{pmatrix} \]
Applying the algorithm described in Section~\ref{sec:algorithms}, one checks that
if $x\in F_2$ is primitive, then $\rho(\phi(x))$ does not have eigenvalue $1$. Hence,
the representation $\rho$ is not an element of $\Irrpr(\phi,\Gamma)$. However,
$\phi(a^2[a,b]) \neq 1$, yet $\rho(\phi(a^2[a,b])) = 1$, so $\rho$ is not faithful. In particular,
$\Gamma$ does not act on a sphere linearly and freely via $\rho$.
This proves the proposition.
\end{proof}

\subsection{Torus homology cover}
The goal of this section is to give a more geometric construction of a
torus cover with nontrivial primitive homology. Namely, we will show
\begin{proposition}
  There is a cover $Y\to T$ of the torus $T$ with one boundary component,
  which is obtained as an iterated homology cover, and
  where
  \[ H_1^\mathrm{prim}(Y;\C) \neq H_1(Y;\C) \]
\end{proposition}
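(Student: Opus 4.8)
The plan is to produce an explicit example, following the same spirit as Proposition~\ref{prop:examples-primitive2} but phrased geometrically in terms of covers of the once-punctured torus $T$. Let $T$ be the torus with one boundary component, so that $\pi_1(T) = F_2 = \langle a, b\rangle$, and recall that under this identification the primitive elements of $F_2$ correspond exactly to the (isotopy classes of) simple closed curves on $T$. The goal is to exhibit an iterated homology cover $Y \to T$ — that is, a tower obtained by taking mod-$m$ homology covers, possibly repeated — whose deck group $G$ is a finite $2$-step nilpotent group for which $\Irrpr(\phi, G) \subsetneq \Irr(G)$, so that Theorem~\ref{theorem:restriction} forces $H_1^\mathrm{prim}(Y;\C) \neq H_1(Y;\C)$.

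The key observation is that the group $\Gamma$ appearing in Proposition~\ref{prop:examples-primitive2}, which fits in $1 \to \Z/2\Z \to \Gamma \to \Z/4\Z \times \Z/4\Z \to 1$ and is a quotient of the canonical mod-$4$, $2$-step nilpotent quotient of $F_2$, is itself realized by an iterated homology cover. First I would take the mod-$4$ homology cover $T_1 \to T$, corresponding to the abelianization $F_2 \to H_1(F_2;\Z/4\Z) = \Z/4\Z \times \Z/4\Z$. The commutator subgroup data of $F_2$ then produces a further central extension: taking the homology cover (at an appropriate modulus) of $T_1$ realizes the mod-$4$ $2$-step nilpotent quotient, and the final rank-$2$ quotient of the center that defines $\Gamma$ is engineered by choosing the correct central subgroup. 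Concretely, I would verify that the surjection $\phi : \pi_1(T) = F_2 \to \Gamma$ defining $Y$ factors as the composition of abelianization-type maps so that $Y \to T$ is genuinely an iterated homology cover in the sense required.

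With the cover identified, the homological conclusion is immediate: Proposition~\ref{prop:examples-primitive2} already establishes that the representation $\rho$ of $\Gamma$ lies in $\Irr(\Gamma) \setminus \Irrpr(\phi, \Gamma)$, since for every primitive (equivalently, every simple closed curve) $x$ the matrix $\rho(\phi(x))$ has no eigenvalue $1$. Hence $\rho$ does not appear in $H_1^\mathrm{prim}(Y;\C)$ even though the Chevalley–Weil formula guarantees it appears in $H_1(Y;\C)$, giving the strict inclusion. The main obstacle, and the only real content beyond quoting Proposition~\ref{prop:examples-primitive2}, is verifying that $\Gamma$ and the defining surjection $\phi$ arise honestly from an iterated \emph{homology} cover rather than an arbitrary central extension: one must check that the successive central quotients taken are precisely those cut out by mod-$m$ homology of the intermediate covers, and that no ad hoc quotient is smuggled in. I expect this bookkeeping — tracking how $H_1(T_1;\Z/m\Z)$ of the intermediate cover maps onto the center of $\Gamma$ — to be the delicate step, though it is a finite and essentially computational verification.
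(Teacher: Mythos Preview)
Your proposal has a genuine gap at precisely the point you flag as ``delicate bookkeeping'': the group $\Gamma$ of order $32$ from Proposition~\ref{prop:examples-primitive2} is \emph{not} realized by an iterated homology cover in the sense the paper intends. In the paper, an iterated homology cover means taking the full mod-$m$ homology cover at each stage. Starting from $T$ with $\pi_1(T)=F_2$, the mod-$4$ homology cover $T_1\to T$ has degree $16$ and $\pi_1(T_1)$ is free of rank $17$; any subsequent mod-$m$ homology cover of $T_1$ then has degree $m^{17}$, so the composite deck group has order $16\cdot m^{17}$, never $32$. Similarly, starting with the mod-$2$ cover (degree $4$, intermediate rank $5$) gives total order $4\cdot m^5$, again never $32$. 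Your suggestion of ``choosing the correct central subgroup'' to pass from the universal $2$-step nilpotent mod-$4$ quotient down to $\Gamma$ is exactly the kind of ad hoc quotient that is \emph{not} a homology cover, so the step you hoped was routine in fact fails.

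The paper's argument proceeds quite differently and does not rely on Proposition~\ref{prop:examples-primitive2} at all. It takes the mod-$2$ homology cover $X\to T$, writes down an explicit basis $a_1,a_2,b_1,b_2,\delta$ of $H_1(X;\Z)\cong\Z^5$, and computes the $(\Z/2)^2$ deck-group action in these coordinates. A geometric lemma (any lift of a nonseparating simple closed curve has $G$-orbit spanning a $2$-dimensional subspace and is fixed by some nontrivial deck element) then gives explicit linear constraints on which classes in $(\Z/p)^5$ can arise from lifts of simple closed curves. A direct case analysis produces a character of $(\Z/p)^5$ on which no such class acts trivially; inducing this to the full deck group $G$ (the extension $1\to(\Z/p)^5\to G\to(\Z/2)^2\to 1$) yields an irreducible constituent lying outside $\Irrpr(\phi,G)$. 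So the example genuinely is the mod-$2$ cover followed by the full mod-$p$ homology cover, and the argument is an explicit coordinate computation rather than an appeal to an abstractly constructed nilpotent quotient.
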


Choose a basepoint on the torus $T^2$ and identify the fundamental
group based at that point with the free group on $A,B$. We first consider 
the mod-$2$ homology cover $X\to T^2$. Denote by $\alpha, \beta$ the deck group
elements corresponding to $A,B$ respectively.
\bigskip

\includegraphics[width=0.5\textwidth]{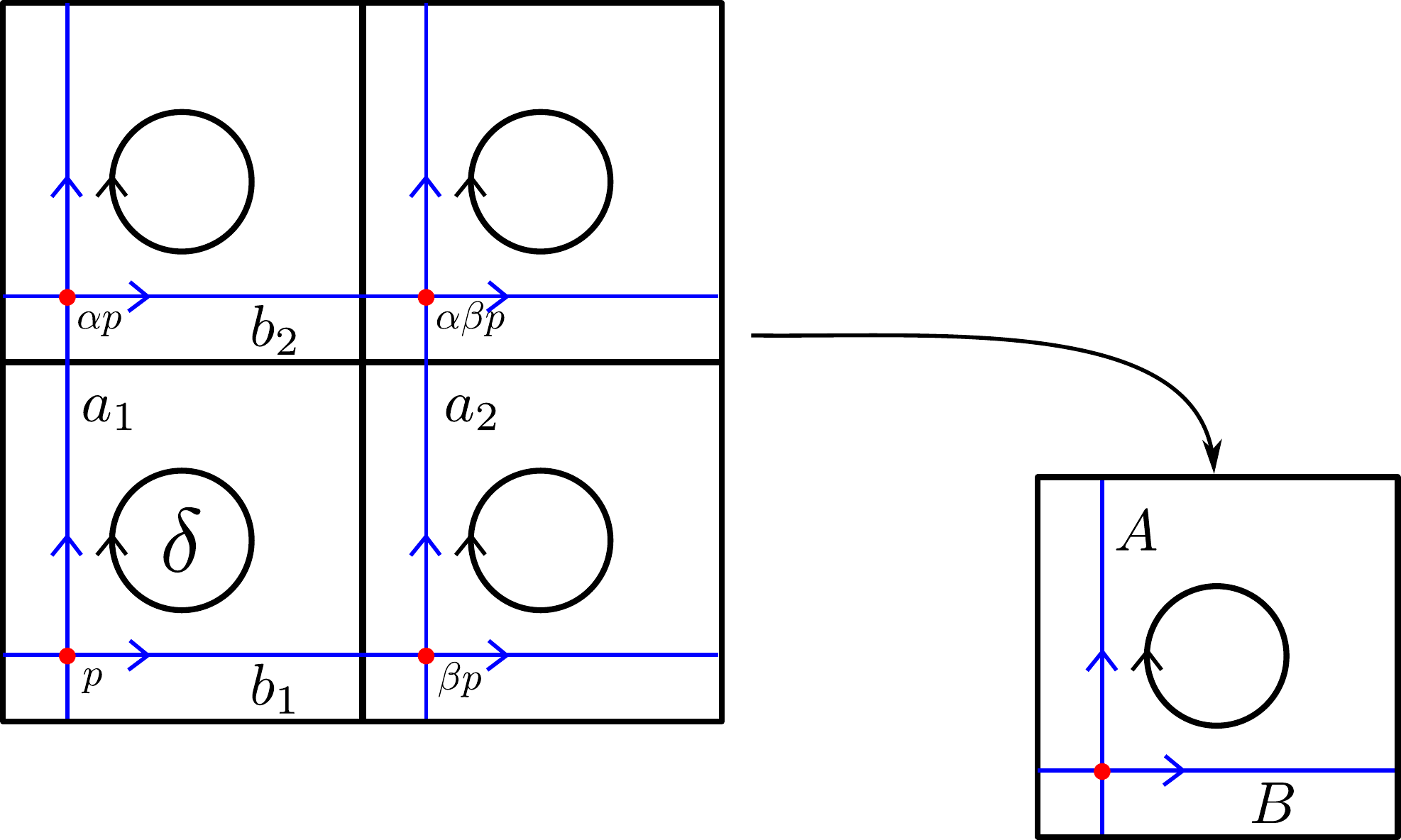}

The homology $H_1(X;\Z)$ is isomorphic to $\Z^5$; we choose an
explicit basis
$$a_1, a_2, b_1, b_2, \delta$$
where $a_1, a_2$ are the two components of the preimage of $A$, and
$b_1,b_2$ are the two components of the preimage of $B$, and $\delta$
is a lift of the boundary component (to the preferred basepoint).

\begin{lemma}
  This is indeed a basis.
\end{lemma}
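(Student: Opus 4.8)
The plan is to realize the cover $X$ as an explicit finite graph and then settle the basis claim by a single unimodularity check. Since $T$ is homotopy equivalent to the wedge $A\vee B$ and the mod-$2$ homology cover is the one determined by the surjection $\phi\colon F(A,B)\to G:=(\Z/2\Z)^2$ sending $A\mapsto\alpha$, $B\mapsto\beta$, the graph underlying $X$ is the Cayley graph of $G$ on the generating set $\{\alpha,\beta\}$: it has $4$ vertices (the elements of $G$) and $8$ oriented edges, namely an $A$-edge $g\to g\alpha$ and a $B$-edge $g\to g\beta$ for each $g\in G$. An Euler characteristic count gives $\chi(X)=4\cdot\chi(T)=-4$, so $H_1(X;\Z)\cong\Z^5$, in agreement with the stated rank; this $\Z^5$ is the group for which we must exhibit the claimed basis.

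The next step is to write each of the five classes explicitly as an integral $1$-cycle in the cellular chain group $C_1(X)=\Z^8$. Because $\alpha$ has order two, its two orbits $\{1,\alpha\}$ and $\{\beta,\alpha\beta\}$ each carry a pair of $A$-edges forming a two-edge loop, and these loops are the elevations $a_1,a_2$ of $A$; likewise the two $\beta$-orbits give the elevations $b_1,b_2$ of $B$. The boundary word $[A,B]=ABA^{-1}B^{-1}$ maps to $0$ in $G$, so it lifts to an embedded loop, and $\delta$ is the resulting four-edge cycle $1\to\alpha\to\alpha\beta\to\beta\to 1$, where the two letters $A^{-1},B^{-1}$ contribute reversed edges and hence negative signs.

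With the cycles recorded, I would fix a spanning tree $\mathcal{T}$ of $X$ (three edges), so that the five non-tree edges furnish the standard tree basis $c_1,\dots,c_5$ of $H_1(X;\Z)$, and then express $a_1,a_2,b_1,b_2,\delta$ in this basis. The claim reduces to checking that the resulting $5\times 5$ integer change-of-basis matrix has determinant $\pm 1$. For a suitable choice of $\mathcal{T}$ one computes relations such as $a_1=c_1$, $b_1=c_4$, $b_2=c_5$, $a_2=c_2+c_3$, and $\delta=-c_2$, whose matrix is visibly unimodular; in particular $\delta$ is precisely what is needed to recover the fifth tree-basis vector, via $c_3=a_2+\delta$.

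The computation itself is entirely routine; the only genuine obstacle is the bookkeeping. One must fix orientation conventions for all eight edges once and for all, track the covering correspondence $g\mapsto g\alpha,\,g\beta$ consistently across the four vertices, and in particular get the signs in $\delta$ right, since the reversed letters of the boundary word are the most error-prone part. I would treat the explicit unimodularity computation as the cleanest finish: although the pairwise-disjoint elevations $a_1,a_2$ (and separately $b_1,b_2$) are linearly independent by the argument in the proof of Proposition~\ref{prop:rep-lift-of-prim}, the curves $A$ and $B$ intersect on $T$, so that lemma does not apply to the full family, and in any case we need the five classes to \emph{generate} $H_1(X;\Z)$ rather than merely to be independent.
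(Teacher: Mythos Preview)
Your approach is correct and genuinely different from the paper's. You pass to the homotopy-equivalent Cayley graph of $(\Z/2\Z)^2$, pick a spanning tree, and verify that the change-of-basis matrix from the tree basis to $(a_1,a_2,b_1,b_2,\delta)$ is unimodular; I checked your specific relations with the tree $\{e_1^A,e_\alpha^B,e_1^B\}$ and the resulting $5\times 5$ matrix indeed has determinant $\pm 1$. The paper instead works on the surface: it shows the five classes generate $H_1(X;\Z)$ by deriving relations such as $a_1-a_2=\delta+\alpha\delta$ to get all four boundary components into the span, and then invokes that $a_1,b_1$ intersect once (so, together with the boundaries, they generate the homology of a genus-$1$ surface with four boundary components). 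Your route is more elementary and self-contained, avoiding the surface-topology input about intersection numbers; on the other hand, the paper's relations $a_1-a_2=\delta+\alpha\delta$ and $b_1-b_2=-\delta-\beta\delta$ are exactly what is reused in the very next lemma to write down the deck-group matrices, so its argument does double duty. Either way the lemma is immediate once one commits to a model; your bookkeeping caveats are well placed but the computation you outline goes through without surprises.
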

\begin{proof}
We show that is a generating set. Namely, we have
$a_1 - a_2 = \delta + \alpha\delta$, and hence $\alpha\delta$ is in
the span $V$ of the set in question. Similarly, $\beta\delta$ is in
$V$, and since $\delta+\alpha\delta+\beta\delta+\alpha\beta\delta = 0$
all four boundary components are in $V$. Furthermore, $a_1,b_1$
correspond two curves intersecting once, and hence $V$ is everything.
\end{proof}

\begin{lemma}
  The deck transformations $\alpha, \beta$ act as vertical
  (resp. horizontal) translations on $X$. Their matrices with respect
  to our basis are
  $$\alpha =
  \begin{pmatrix}
    1 & 0 & 0 & 0 & 1 \\
    0 & 1 & 0 & 0 & -1 \\
    0 & 0 & 0 & 1 & 0 \\
    0 & 0 & 1 & 0 & 0 \\
    0 & 0 & 0 & 0 & -1 
  \end{pmatrix}, 
  \quad\beta = 
  \begin{pmatrix}
    0 & 1 & 0 & 0 & 0 \\
    1 & 0 & 0 & 0 & 0 \\
    0 & 0 & 1 & 0 & -1 \\
    0 & 0 & 0 & 1 & 1 \\
    0 & 0 & 0 & 0 & -1
  \end{pmatrix}
  $$
\end{lemma}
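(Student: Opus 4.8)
The plan is to read both matrices directly off the explicit $2\times2$ tiling picture of $X$. Modeling $T$ as $\R^2/\Z^2$ with a single hole, the mod-$2$ homology cover $X$ is $\R^2/(2\Z)^2$ with four holes, and the deck group $(\Z/2\Z)^2=\langle\alpha,\beta\rangle$ acts by the translations by $(0,1)$ and $(1,0)$ respectively (this is what is meant by ``vertical, resp. horizontal, translations''). In this model the two components $a_1,a_2$ of the preimage of the vertical loop $A$ are the circles $\{x=0\}$ and $\{x=1\}$, while $b_1,b_2$ are the horizontal circles $\{y=0\}$ and $\{y=1\}$, and $\delta$ is the boundary of the preferred hole.

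First I would compute the action on the four curve classes. Each of $a_1,a_2,b_1,b_2$ is the class of a \emph{specific} embedded oriented circle in the preimage, and an orientation-preserving translation simply carries one such circle to another, so its image class is read off with no $\delta$-correction. Concretely, the vertical translation $\alpha$ preserves each of $\{x=0\},\{x=1\}$ setwise (acting on it by a rotation isotopic to the identity) and interchanges $\{y=0\}\leftrightarrow\{y=1\}$, giving $\alpha a_1=a_1$, $\alpha a_2=a_2$, $\alpha b_1=b_2$, $\alpha b_2=b_1$; symmetrically $\beta a_1=a_2$, $\beta a_2=a_1$, $\beta b_1=b_1$, $\beta b_2=b_2$. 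This fills in the upper-left $4\times4$ blocks of both matrices and shows those four columns have vanishing $\delta$-entry.

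The only delicate computation is the action on $\delta$, which is special precisely because its orbit $\{\delta,\alpha\delta,\beta\delta,\alpha\beta\delta\}$ has four elements while only $\delta$ was kept in the basis; hence $\alpha\delta$ and $\beta\delta$ must be re-expressed. Here I would invoke the boundary relations already established in the proof of the preceding lemma. From $a_1-a_2=\delta+\alpha\delta$ one gets $\alpha\delta=a_1-a_2-\delta$, which is the last column of the matrix for $\alpha$; the analogous strip between $b_1$ and $b_2$ cobounds with the two holes it contains, yielding $b_2-b_1=\delta+\beta\delta$ and hence $\beta\delta=-b_1+b_2-\delta$, the last column of $\beta$. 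Geometrically, each relation expresses that the subsurface of $X$ caught between two parallel preimage circles is bounded by those circles together with the two hole-boundaries inside it, so its null-homologous boundary is exactly the stated combination.

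I expect the main obstacle to be the orientation bookkeeping: one must fix compatible orientations of the $a_i,b_i,\delta$ and of the two cobounding strips so that the signs in $\alpha\delta=a_1-a_2-\delta$ and $\beta\delta=-b_1+b_2-\delta$ come out as written. As a consistency check I would verify these against the global relation $\delta+\alpha\delta+\beta\delta+\alpha\beta\delta=0$; indeed, using the curve-permutation formulas to compute $\alpha\beta\delta=\alpha(\beta\delta)=b_1-b_2-a_1+a_2+\delta$, the four terms sum to zero, confirming the signs. Assembling the five columns of each transformation then yields precisely the two displayed matrices.
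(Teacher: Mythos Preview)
Your argument is correct and is essentially the same as the paper's: both identify $\alpha,\beta$ as the vertical and horizontal translations, read the action on $a_i,b_i$ off the resulting permutation, and then compute $\alpha\delta,\beta\delta$ from the annulus boundary relations $\delta+\alpha\delta+a_2-a_1=0$ and $\delta+\beta\delta+b_1-b_2=0$. The only additions in your write-up are the explicit $\R^2/(2\Z)^2$ model and the consistency check against $\delta+\alpha\delta+\beta\delta+\alpha\beta\delta=0$, both of which are fine.
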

\begin{proof}
  As $\alpha$ acts as a vertical translation, it fixes both $a_1,a_2$
  and interchanges $b_1,b_2$. We have
  $$\delta + \alpha\delta + a_2 - a_1 = 0$$
  as those four curves are the boundary of the ``left'' two-holed annulus.
  This explains the first matrix. Similarly, $\beta$ acts as a
  horizontal translation. Thus, $b_1, b_2$ are fixed and $a_1, a_2$
  are exchanged. We have
  $$\delta + \beta\delta + b_1 - b_2 = 0$$
  as those four are the boundary of the ``lower'' two-holed
  annulus. This explains the second matrix.
\end{proof}

To analyse which elements of homology are components of lifts of
simple closed curves, we use the following lemma.
\begin{lemma}\label{lem:identify-ind}
  If $x \in H_1(X;\Z/p^n\Z)$ is equal to a multiple of a component of
  the preimage of a nonseparating simple closed curve, then
  one of $\alpha x, \beta x, \alpha\beta x$ is equal to $x$ and
  $$\dim_{\mathbb{F}_{p^n}}\mathrm{Span}\{x, \alpha x, \beta x,
  \alpha\beta x\} = 2.$$
\end{lemma}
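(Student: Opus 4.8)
The plan is to read off everything from the explicit description of the covering $X \to T^2$ and its deck group $G = \langle \alpha, \beta\rangle \cong (\Z/2\Z)^2$. Let $c$ be the nonseparating simple closed curve in question and let $p:X\to T^2$ denote the projection. Since $c$ is nonseparating, its class $[c]\in H_1(T^2;\Z)=\Z^2$ is primitive, so its reduction mod $2$ is a \emph{nonzero} element of $H_1(T^2;\Z/2\Z)=G$; this is exactly the element $g_c$ generating the stabilizer $G_c$, and it is therefore one of $\alpha,\beta,\alpha\beta$. As $g_c$ has order $2$, the preimage $p^{-1}(c)$ has exactly $[G:G_c]=2$ components $\tilde c_1,\tilde c_2$ (the two elevations of $c$), interchanged by the cosets of $G_c$.

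First I would record the deck action on the classes $[\tilde c_1],[\tilde c_2]$. Orient $\tilde c_1,\tilde c_2$ compatibly with a fixed orientation of $c$. The generator $g_c$ maps each component to itself covering $\mathrm{id}_c$, hence preserves orientation, so $g_c\cdot[\tilde c_i]=[\tilde c_i]$. Writing the given class as $x=k[\tilde c_1]$ with $k\neq 0$, this gives $g_c\cdot x=x$, and since $g_c\in\{\alpha,\beta,\alpha\beta\}$ this is precisely the first assertion of the lemma. The two remaining nontrivial elements of $G$ lie outside $G_c$, so they carry $\tilde c_1$ to $\tilde c_2$ (again orientation-preservingly, as they cover $\mathrm{id}_c$), whence they swap $[\tilde c_1]\leftrightarrow[\tilde c_2]$. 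Therefore $\{x,\alpha x,\beta x,\alpha\beta x\}=\{k[\tilde c_1],k[\tilde c_2]\}$, and the span in question is $\mathrm{Span}\{[\tilde c_1],[\tilde c_2]\}$ (rescaled by $k$).

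It remains to prove that $[\tilde c_1]$ and $[\tilde c_2]$ are linearly independent; this is the crux, as everything above is formal covering-space bookkeeping. The cleanest route is to reduce to the standard curve. Since $\Mod(T^2)$ acts transitively on isotopy classes of nonseparating simple closed curves, choose $\psi\in\Mod(T^2)$ with $\psi(c)$ isotopic to $A$. The mod-$2$ homology cover is characteristic, so $\psi$ lifts to a homeomorphism $\tilde\psi$ of $X$, whose induced map $\tilde\psi_*$ is a linear automorphism of $H_1(X;\Z/p^n\Z)$ sending $\{[\tilde c_1],[\tilde c_2]\}$ to the classes of the components of $p^{-1}(A)$, namely $\{[a_1],[a_2]\}$ (up to sign and relabeling). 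Since $a_1,a_2$ belong to the explicit basis, they are independent, and hence so are $[\tilde c_1],[\tilde c_2]$; this gives the asserted value $2$. (Alternatively, independence can be seen directly by choosing a dual curve $d$ on $T^2$ with $\hat i(c,d)=1$ and computing algebraic intersection numbers of the $[\tilde c_i]$ against the elevations of $d$, which distinguishes the two classes over any coefficient ring.) The main obstacle is exactly this independence step: the naive approach via $p_*$ fails, since $p_*[\tilde c_1]=p_*[\tilde c_2]=2[c]$, and it is the passage through the isomorphism $\tilde\psi_*$ that makes the conclusion insensitive to the coefficient subtleties (for instance the vanishing of $4$ modulo $2$) that would obstruct a transfer-based argument.
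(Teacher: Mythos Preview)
Your proof is correct and follows essentially the same strategy as the paper: reduce to the model curve $A$ via a lifted mapping class, and read off both the stabilizer claim and the rank-$2$ claim from the explicit basis $\{a_1,a_2,b_1,b_2,\delta\}$. The only real difference is cosmetic: you establish the fixed-point assertion directly by identifying $g_c$ with the mod-$2$ homology class of $c$ (nonzero since $c$ is nonseparating), whereas the paper deduces it by pulling $\alpha$ back through the automorphism of $G$ induced by the lift $\tilde\psi$.
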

\begin{proof}
  Suppose that $x = [\widetilde{\gamma}]$ is defined by a component
  $\widetilde{\gamma}$ of the preimage of a simple closed curve
  $\gamma$. First note that there is a mapping class $\varphi$ of
  $T^2$ and a lift $\widetilde{\varphi}$ so that $\widetilde{\varphi}
  = a_1$. Namely, we can choose $\varphi$ so that $\varphi(\gamma) =
  A$. Then, for any lift $\widetilde{\varphi}$ we have that
  $\widetilde{\varphi}(\widetilde{\gamma})$ is a component of the
  preimage of $A$. We can choose a lift so that this component is
  $a_1$.

  Since $\widetilde{\varphi}$ is equivariant with respect to the deck
  group action, it therefore maps the $G$-span of $x$ to the $G$-span
  of $a_1$. In particular, the $G$-span of $x$ has rank 2. Since $a_1$
  is fixed by $\alpha$, we have that $x$ is fixed by
  $\widetilde{\varphi}_*^{-1}(\alpha)$ as claimed.
%  As a first reduction, note that it suffices to consider the case of
%  $x = \lambda a_1$. Namely, any two simple closed curves on $T^2$ are
%  related by a mapping class group element, and hence any $x$ can be
%  mapped to $a_1$ by an application of a mapping class group lift and
%  the deck group. Neither of those operations change the
%  $\mathbb{F}_{p^n}$--rank of the span. Additionally, the lift 
%, showing the claim.
%
%  For $x=a_1$, the orbit consits of $a_1, a_2$, which are linearly
%  independent over any finite field $\mathbb{F}_{p^n}$.
\end{proof}

Consider an element
$$x = r_1 a_1 + r_2 a_2 + s_1 b_1 + s_2 b_2 + d \delta$$
and its orbit under $G$: 
\begin{eqnarray*}
  \alpha x &=& (r_1+d) a_1 + (r_2-d) a_2 + s_2 b_1 + s_1 b_2 -d \delta\\
  \beta x &=& r_2 a_1 + r_1 a_2 + (s_1-d) b_1 + (s_2+d) b_2 - d\delta \\
  \alpha\beta x &=& (r_2-d) a_1 + (r_1+d) a_2 + (s_2+d) b_1 + (s_1-d)
  b_2 + d\delta
\end{eqnarray*}
We collect the coefficients in the matrix:
$$
\begin{pmatrix}
  r_1 & r_2 & s_1 & s_2 & d \\
  r_1 + d & r_2 - d & s_2 & s_1 & -d \\
  r_2 & r_1 & s_1-d & s_2+d & - d \\
  r_2 - d & r_1 + d & s_2 +d & s_1-d & d
\end{pmatrix}
$$
If $x$ is defined by a component of the lift of a simple closed curve,
then by Lemma~\ref{lem:identify-ind} the first row has to be equal to one
of the other rows. We distinguish cases, depending on which rows are equal.

\paragraph{Row 1 equal to Row 2}
This immediately implies $d=0$ and then $s_1 = s_2$. This leaves us
with the matrix
$$
\begin{pmatrix}
  r_1 & r_2 & s_1 & s_1 & 0 \\
  r_1 & r_2 & s_1 & s_1 & 0 \\
  r_2 & r_1 & s_1 & s_1 & 0 \\
  r_2 & r_1 & s_1 & s_1 & 0
\end{pmatrix}
$$
which needs to have rank $2$. There are two subcases: if $s_1 \neq 0$
then the matrix has rank $2$ if and only if $r_1 \neq r_2$. If $s_1 =
0$, then the matrix has rank $2$ if and only if $r_1 \neq \pm r_2$.
\begin{framed}
  $d=0$, $s_1=s_2$, $r_1 \neq r_2$. If $s_1=s_2=0$ then also $r_1 \neq -r_2$.
\end{framed}

\paragraph{Row 1 equal to Row 3}
This immediately implies $d=0$ and then $r_1 = r_2$. This leaves us
with the matrix
$$
\begin{pmatrix}
  r_1 & r_1 & s_1 & s_2 & 0 \\
  r_1 & r_1 & s_2 & s_1 & 0 \\
  r_1 & r_1 & s_1 & s_2 & 0 \\
  r_1 & r_1 & s_2 & s_1 & 0
\end{pmatrix}
$$
which needs to have rank $2$. There are two subcases: if $r_1 \neq 0$
then the matrix has rank $2$ if and only if $s_1 \neq s_2$. If $r_1 =
0$, then the matrix has rank $2$ if and only if $s_1 \neq \pm s_2$.
\begin{framed}
  $d=0$, $r_1=r_2$, $s_1 \neq s_2$. If $r_1=r_2=0$ then also $s_1 \neq -s_2$.
\end{framed}

\paragraph{Row 1 equal to Row 4}
This leads to $r_1 = r_2 - d$ and $s_1 = s_2 + d$. Simplifying the
matrix yields
$$
\begin{pmatrix}
  r_1 & r_1 + d & s_1 & s_1 - d & d \\
  r_1 + d & r_1 & s_1-d & s_1 & -d \\
  r_1 + d & r_1 & s_1-d & s_1 & - d \\
  r_1 & r_1 + d & s_1 & s_1 - d & d 
\end{pmatrix}
$$
If $d=0$ this has rank $1$, so this case never happens. The matrix has
rank $\leq 1$ if and only if the first two rows are dependent. As $d
\neq 0$ the only way this can happen is if the second row is the
negative of the first row. This leads to
$$-r_1 = r_1 + d, -s_1 = s_1-d$$
Hence the matrix has rank $2$ if one of the above is false, leading to

\begin{framed}
  $d\neq 0$, $r_1=r_2-d$, $s_1 = s_2+d$ and $d \neq -2r_1$ or $d \neq 2s_1$.
\end{framed}

\paragraph{Consequence}
\begin{lemma}
  The representation of $(\Z/p)^5$ defined by
  $$\rho(r_1, r_2, s_1, s_2, d) = \left( z \mapsto \zeta_p^{r_1-r_2 +
      s_1-s_2 + d}z \right)$$
  has the property that no $x\in (\Z/p)^5$ which is equal to the
  component of a preimage of a simple closed curve acts as the identity.
\end{lemma}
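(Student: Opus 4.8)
The plan is to reduce the statement to the case analysis already carried out in the three framed boxes preceding it. Since $\rho$ is one-dimensional, the element $x=(r_1,r_2,s_1,s_2,d)$ acts as multiplication by $\zeta_p^{c(x)}$, where I set $c(x):=r_1-r_2+s_1-s_2+d$, a linear functional on $(\Z/p)^5$. Thus $\rho(x)=\mathrm{id}$ precisely when $c(x)\equiv 0\pmod p$, and the lemma amounts to showing that $c(x)\neq 0$ for every $x$ that can arise as a component of the preimage of a simple closed curve. By Lemma~\ref{lem:identify-ind}, any such $x$ satisfies the two necessary conditions that one of $\alpha x,\beta x,\alpha\beta x$ equals $x$ and that $\dim\mathrm{Span}\{x,\alpha x,\beta x,\alpha\beta x\}=2$. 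These are exactly the hypotheses sorted out in the three cases ``Row $1$ equal to Row $k$'' for $k=2,3,4$, whose conclusions are recorded in the three framed boxes. Hence it suffices to verify $c(x)\neq 0$ on each of these three framed regions.

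I would then simply substitute each framed condition into $c(x)$. In the case Row $1$ equal to Row $2$ one has $d=0$ and $s_1=s_2$, so $c(x)=r_1-r_2$, which is nonzero by the framed hypothesis $r_1\neq r_2$. In the case Row $1$ equal to Row $3$ one has $d=0$ and $r_1=r_2$, so $c(x)=s_1-s_2$, nonzero since $s_1\neq s_2$. In the case Row $1$ equal to Row $4$ one has $r_1=r_2-d$ and $s_1=s_2+d$, whence $r_1-r_2=-d$ and $s_1-s_2=d$, so that $c(x)=-d+d+d=d$, which is nonzero because this case occurs only when $d\neq 0$. In every admissible case $c(x)\not\equiv 0\pmod p$, so $\rho(x)\neq\mathrm{id}$, which is the claim.

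The only real content is that the functional $c$ has been chosen precisely so that, on each framed region, it restricts to exactly the coordinate difference that the preceding analysis guarantees to be nonzero; once the case analysis is in place there is no genuine obstacle, and the computation is uniform in $p$ (no parity hypothesis is needed). The one point deserving care is confirming that the three framed cases are genuinely exhaustive, i.e.\ that Lemma~\ref{lem:identify-ind} leaves no further possibility for a simple-closed-curve component beyond ``Row $1$ agrees with one of Rows $2,3,4$ while the orbit spans a $2$-dimensional space'', since the entire argument rests on $c$ being nonvanishing across the complete list of admissible $x$.
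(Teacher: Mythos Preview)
Your proof is correct and is essentially identical to the paper's own argument: both reduce via Lemma~\ref{lem:identify-ind} to the three framed cases and then substitute each set of constraints into the exponent $r_1-r_2+s_1-s_2+d$, obtaining $r_1-r_2$, $s_1-s_2$, and $d$ respectively, each nonzero by the framed hypotheses. Your closing remark about exhaustiveness is well taken but already handled by the preceding case analysis, since the condition ``one of $\alpha x,\beta x,\alpha\beta x$ equals $x$'' is exactly ``Row~1 equals one of Rows~2,~3,~4''.
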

\begin{proof}
 We consider in turn the three possibilities for such $x$. All
 exponents are seen as elements of $\Z/p$. The equalities and
 inequalities also are in this field.
 \begin{enumerate}
 \item $d=0$, $s_1=s_2$, $r_1 \neq r_2$. If $s_1=s_2=0$ then also $r_1
   \neq -r_2$.

   In that case the element acts as 
   $$z \mapsto \zeta_p^{r_1-r_2 + s_1-s_2 + d} = \zeta_p^{r_1-r_2} z
   \neq z$$
 \item $d=0$, $r_1=r_2$, $s_1 \neq s_2$. If $r_1=r_2=0$ then also $s_1
   \neq -s_2$. 

   In that case the element acts as 
   $$z \mapsto \zeta_p^{r_1-r_2 + s_1-s_2 + d} = \zeta_p^{s_1-s_2} z
   \neq z$$
 \item $d\neq 0$, $r_1=r_2-d$, $s_1 = s_2+d$ and $d \neq -2r_1$ or $d
   \neq 2s_1$.

   In that case the element acts as
   $$z \mapsto \zeta_p^{r_1-r_2 + s_1-s_2 + d} = \zeta_p^{d} z
   \neq z$$
 \end{enumerate}
\end{proof}
The following corollary finishes the proof of the main proposition of
this section.
\begin{corollary}
  Consider the group $G$ defined by the extension
  \[ 1 \to (\Z/p)^5 \to G \to (\Z/2)^2 \to 1 \]
  via iterated homology covers of $G$, and the corresponding $\phi:F_2\to G$.
  Then there is a irreducible representation $V$ of $G$ so that no
  image of a simple closed curve in $G$ fixes any vector.
\end{corollary}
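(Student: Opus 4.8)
The plan is to construct $V$ by inducing the character $\rho$ from the normal subgroup $N := (\Z/p)^5 = H_1(X;\Z/p)$ up to $G$. Writing $Q := (\Z/2)^2 = H_1(T;\Z/2)$ for the quotient, set $V := \mathrm{Ind}_N^G \rho$, a $4$-dimensional representation of $G$. Since the eigenvalues of an element acting on a direct sum are the union of its eigenvalues on the summands, the property ``$\phi(\gamma)$ has no nonzero fixed vector'' passes to any subrepresentation; it therefore suffices to show that no $\phi(\gamma)$, for $\gamma$ a simple closed curve on $T$, fixes a nonzero vector of the (a priori reducible) $V$, and then to take any irreducible summand. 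Two structural facts I would record first are: (i) restricted to $N$, one has $V|_N \cong \bigoplus_{q\in Q} {}^q\rho$, so that for $g\in N$ the eigenvalues of $g$ on $V$ are exactly the scalars $\rho(q^{-1}gq)$ as $q$ ranges over coset representatives; and (ii) conjugation of $N$ by a lift of $q\in Q$ agrees with the deck action of $Q=(\Z/2)^2$ on $H_1(X;\Z/p)$, i.e. with the matrices $\alpha,\beta$ computed above.

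With this in hand I would split into cases according to the image $\bar\gamma\in Q$ of the simple closed curve $\gamma$. If $\bar\gamma=0$ then $\gamma$ lifts to a loop $\tilde\gamma$ in $X$ and $\phi(\gamma)\in N$ is exactly the homology class $[\tilde\gamma]\in H_1(X;\Z/p)=N$, which is a component of the preimage of the nonseparating curve $\gamma$. By (i) the eigenvalues of $\phi(\gamma)$ on $V$ are the $\rho(q^{-1}[\tilde\gamma]q)=\rho(q_\ast[\tilde\gamma])$, and by (ii) each $q_\ast[\tilde\gamma]$ is again a component of the preimage of $\gamma$. The preceding lemma (via the three framed cases together with Lemma~\ref{lem:identify-ind}) asserts that $\rho$ takes a value $\neq 1$ on any such class, so $1$ is not an eigenvalue and $\phi(\gamma)$ fixes no nonzero vector.

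The more delicate case is $\bar\gamma\neq 0$, where $\phi(\gamma)\notin N$. Here the key observation is that $\phi(\gamma)$ permutes the four lines of $V|_N$ by the translation $q\mapsto q+\bar\gamma$ of $Q=(\Z/2)^2$, which is fixed-point-free of order $2$; thus $V$ splits into two $\phi(\gamma)$-invariant planes, on each of which $\phi(\gamma)$ interchanges the two lines. On such a plane $\phi(\gamma)$ has eigenvalues $\pm\mu$, where $\mu^2$ is the scalar by which $\phi(\gamma)^2$ acts on either line; hence $1$ is an eigenvalue if and only if that scalar equals $1$. I would then identify $\phi(\gamma)^2=\phi(\gamma^2)=[\tilde\gamma]$, where $\tilde\gamma$ is a component of the preimage of $\gamma$ in $X$ (the curve $\gamma$ lifts with degree $2$, and the lift of $\gamma^2$ closes up to a simple closed curve on $X$). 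As before, the relevant scalar is $\rho(q^{-1}[\tilde\gamma]q)=\rho(q_\ast[\tilde\gamma])$, with $q_\ast[\tilde\gamma]$ again a component of the preimage of the nonseparating curve $\gamma$, so the preceding lemma forces it to be $\neq 1$. Thus $\phi(\gamma)$ has no eigenvalue $1$ in this case either.

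The main obstacle I anticipate is the bookkeeping in the $\bar\gamma\neq 0$ case: one must check carefully that $\phi(\gamma)^2$ really is the class of a component of the preimage of $\gamma$ (so that the preceding lemma applies), and that the fixed-point-free permutation of the lines genuinely reduces the eigenvalue question to a statement about $\phi(\gamma)^2\in N$. Once both cases are settled, no $\phi(\gamma)$ fixes a nonzero vector of $V$, and passing to any irreducible summand produces the desired irreducible representation; this yields $\Irrpr(\phi,G)\subsetneq\Irr(G)$, and hence $H_1^{\mathrm{prim}}(Y;\C)\subsetneq H^1(Y;\C)$ for the associated cover, as claimed.
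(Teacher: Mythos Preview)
Your proposal is correct and takes the same approach as the paper: induce $\rho$ from $N=(\Z/p)^5$ to $G$, show no image of a simple closed curve fixes a nonzero vector in $\mathrm{Ind}_N^G\rho$ by appealing to the preceding lemma, and then pass to an irreducible summand. The paper streamlines your case analysis by observing directly that any fixed vector for $\phi(\gamma)$ is also fixed by $\phi(\gamma^n)\in N$ (with $n$ the order of $\bar\gamma$ in $Q$), and since $N$ acts diagonally on $V|_N=\bigoplus_{q}{}^{q}\rho$ some nonzero component must then satisfy ${}^{q}\rho(\phi(\gamma^n))=1$, contradicting the lemma---this collapses both of your cases into one step and avoids the explicit $\pm\mu$ eigenvalue computation.
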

\begin{proof}
  We consider the representation induced from the representation
  $\rho$ of $(\Z/p)^5$ to $G$. While this may not be irreducible, it does have
  the property that no image of a simple closed curve has any fixed vectors.
  Namely, suppose that $v$ would be fixed by the image of $\gamma$. Then, 
  for $\gamma^n$ the smallest power so that $\gamma^n$ maps into $(\Z/p)^5$,
  a component of $v$ would also have a fixed vector. By the previous lemma,
  this is impossible unless $v = 0$.
\end{proof}

\section{The case of surfaces and simple closed curves}
\label{section:surfaces}
\label{sec:surfaces}

There is a close analogy of our discussion for finite regular covers
$f:Y\to X$ where $X$ is a genus $g\geq 2$ surface.  In this section we sketch this analogy.

\subsection{Simple homology}
The analog for surfaces of primitive homology for graphs is the following. 

\begin{definition}[{\bf simple homology}]
Let $f:Y\to X$ be a finite cover.  The \emph{simple closed curve homology} (or \emph{simple homology})  $H_1^{\mathrm{scc}}(Y;\C)$ corresponding to this finite cover is defined to be:
\[H_1^{\mathrm{scc}}(Y;\C):=\text{$\C$-span}\{W_\gamma: \text{$\gamma\in \pi_1(X)$ is a simple closed curve}\}\]
\end{definition}
Using work of Putman-Wieland \cite{PW}, Boggi-Looijenga \cite{BL} conjecture
that vanishing of the virtual first Betti numbers of the moduli spaces of Riemann surfaces 
(which is a well-known open question of Ivanov; see \cite{PW} for precise
statements) is closely related to the question if  $H_1(Y;\C)=H_1^{\mathrm{scc}}(Y;\C)$. 

Again, the very basic question if simple homology is ever a proper 
subrepresentation is at the current point wide open (in the case of integral
coefficients, examples have been constructed where simple homology 
has finite index bigger than one, see \cite{Ir, KR}).

\smallskip
Here, we want to again study a representation-theoretic version of simple homology.

\begin{definition}[{\boldmath$\Irrscc(\phi,G)$}]
  Let $G$ be a finite group.  Fix a homomorphism $\phi: \pi_1(X)\to
  G$.  Let $\Irrscc(\phi,G)$ denote the set of irreducible representations
  $V$ of $G$ with the property that $\phi(\gamma)(v)=v$ for some
  element $\gamma$ which can be represented by a simple closed curve
  and some nonzero $v\in V$.
\end{definition}

We will prove the analog of Theorem~\ref{theorem:restriction}
for surfaces.

\begin{theorem}[{\bf Restricting simple closed curve homology}]
\label{thm:characterize-scc}
Let $f:Y\to X$ be a regular $G$-covering of surfaces defined by
$\phi:\pi_1(X)\to G$, where $G$ is a finite group and $X$ has genus at
least $2$.  Then
\[ H_1^{\mathrm{scc}}(Y;\C) \subseteq \bigoplus_{V_i\in \Irrscc(\phi,G)}
V_i^{(2g-2)\cdot\dim(V_i)} \oplus \C^2_{triv}\]
\end{theorem}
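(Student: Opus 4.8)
The plan is to follow the template of the graph case: prove the simple-closed-curve analogue of Proposition~\ref{prop:rep-lift-of-prim}, push it through Frobenius Reciprocity exactly as in the proof of Theorem~\ref{theorem:restriction}, and then combine the result with the Chevalley--Weil formula \cite{CW} for surfaces to read off the stated multiplicities.

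First I would fix a simple closed curve $\gamma$ on $X$ and let $\hat{\gamma}_1,\ldots,\hat{\gamma}_m$ be its elevations in $Y$, with $\hat{\gamma}_1$ the preferred one. As in Section~\ref{sec:primitive}, the stabilizer of $\hat{\gamma}_1$ in $G$ is the cyclic group $G_\gamma=\langle g_\gamma\rangle$ with $g_\gamma=\phi([\gamma])$; note that $g_\gamma$ translates $\hat{\gamma}_1$ along itself and so preserves its orientation, while $G$ permutes the elevations exactly as it permutes the cosets $G/G_\gamma$. Hence there is a natural $G$-equivariant map
\[ \mathrm{Ind}_{G_\gamma}^G\C_{\mathrm{triv}}\;\cong\;\C[G/G_\gamma]\;\longrightarrow\;\mathrm{Span}_{H_1(Y)}\{g\cdot[\hat{\gamma}_1]:g\in G\}, \]
sending each coset $gG_\gamma$ to the class $[g\cdot\hat{\gamma}_1]$; it is well defined because $g_\gamma$ fixes $[\hat{\gamma}_1]$, and surjective by construction.

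The one genuine difference from the graph case is that I would \emph{not} try to upgrade this surjection to an isomorphism: the isomorphism in Proposition~\ref{prop:rep-lift-of-prim} came from the independence step (Step 2), which used a boundary component in each complementary region, and on a closed surface of genus $g\geq2$ the elevation classes can fail to be linearly independent. For the containment this is harmless, since the irreducible constituents of a quotient of $\mathrm{Ind}_{G_\gamma}^G\C_{\mathrm{triv}}$ form a subset of those of $\mathrm{Ind}_{G_\gamma}^G\C_{\mathrm{triv}}$ itself. By Frobenius Reciprocity, an irreducible $V$ occurs in $\mathrm{Ind}_{G_\gamma}^G\C_{\mathrm{triv}}$ iff $\langle\Res_{G_\gamma}^GV,\C_{\mathrm{triv}}\rangle_{G_\gamma}=\dim\Fix(G_\gamma)>0$, i.e. iff $g_\gamma$ fixes a nonzero vector of $V$, i.e. iff $V\in\Irrscc(\phi,G)$. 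Summing these spans over all simple closed curves $\gamma$, the space $H_1^{\mathrm{scc}}(Y;\C)$ is a $G$-subrepresentation of $H_1(Y;\C)$ all of whose irreducible constituents lie in $\Irrscc(\phi,G)$, so it is contained in the sum of the corresponding isotypic components of $H_1(Y;\C)$. The Chevalley--Weil formula gives $H_1(Y;\C)\cong\C[G]^{2g-2}\oplus\C^2$, so each nontrivial $V_i$ occurs with multiplicity $(2g-2)\dim(V_i)$ and $\C_{\mathrm{triv}}$ occurs $2g$ times; since $\C_{\mathrm{triv}}\in\Irrscc(\phi,G)$ always (genus $\geq2$ ensures simple closed curves exist, and every group element acts trivially on $\C_{\mathrm{triv}}$), the isotypic bound is exactly
\[ \bigoplus_{V_i\in\Irrscc(\phi,G)}V_i^{(2g-2)\dim(V_i)}\oplus\C^2_{\mathrm{triv}}, \]
as claimed.

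The main obstacle is precisely the point just flagged: on a closed surface one cannot expect linear independence of elevations, so the argument must be organized around the surjection \emph{onto} the induced representation rather than an isomorphism \emph{with} it. The containment is robust enough to survive this weakening, but an isomorphism statement would in general be false. A secondary subtlety is the careful accounting of the two extra trivial summands supplied by Chevalley--Weil, which is what replaces the single $\C_{\mathrm{triv}}$ of Theorem~\ref{theorem:restriction} with $\C^2_{\mathrm{triv}}$ here.
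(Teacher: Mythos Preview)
Your argument is correct and follows the same overall template as the paper: identify the $G$-span of the elevations of a simple closed curve with (a quotient of) $\mathrm{Ind}_{G_\gamma}^G\C_{\mathrm{triv}}$, apply Frobenius Reciprocity, and then read off multiplicities from Chevalley--Weil.

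The one genuine difference is that you take a shortcut the paper does not. The paper replaces Step~2 of the proof of Proposition~\ref{prop:rep-lift-of-prim} by an explicit exact sequence
\[
0 \to \C\Bigl[\textstyle\sum_j \partial R_j\Bigr] \to \bigoplus_j \C[\partial R_j] \to \bigoplus_i \C[\hat{\alpha}_i] \to \mathrm{Span}_{H_1(Y)}\{\hat{\alpha}_i\} \to 0,
\]
which computes precisely the relations among the elevation classes on a closed surface (where linear independence can fail). You instead observe that for the \emph{containment} conclusion one only needs the surjection $\C[G/G_\gamma]\twoheadrightarrow \mathrm{Span}\{g\cdot[\hat{\gamma}_1]\}$, since irreducible constituents of a quotient are among those of the source. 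That observation is valid and makes the exact-sequence lemma unnecessary for the theorem as stated. What the paper's lemma buys is finer information---the exact kernel of the surjection, hence in principle the exact $G$-structure of the span of elevations---but the paper does not actually exploit this to sharpen the conclusion, so your streamlined route loses nothing here. (Minor wording slip: in your final paragraph you say ``surjection \emph{onto} the induced representation''; you mean the surjection \emph{from} it, as you wrote correctly earlier.)
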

For abelian covers it is again not hard so show that simple homology is all of homology
(this follows e.g. from the considerations in \cite{Lo}) .

As in the case of primitive homology one can perform computer experiments to test if
covers satisfy the representation--theoretic obstruction for groups acting
freely and linearly on spheres. The examples from Propostion~\ref{prop:examples-primitive}
still apply; but the case of simple closed curves is more restrictive than the
primitive case, as the following shows (compare the end of the section for a sketch of the proof):
\begin{proposition}\label{prop:torus-scc-prim}
  Let $\Sigma_{1,2}$ be a torus with two marked points.
  There are surjections $\phi:\pi_1(\Sigma_{1,2}) \to G$ with the following two properties:
  \begin{enumerate}[i)]
  \item The group $G$ acts freely and linearly on a sphere $S^N$.
  \item No element of $\pi_1(\Sigma_{1,2})$ representing a simple closed curve
    is contained in the kernel of $q$.
  \item There are primitive elements contained in the kernel of $\phi$.
  \end{enumerate}
  In particular, we have $\Irrscc(\phi,G)\neq\Irr(G)$ and
  for the associated cover we have $H_1^{\mathrm{scc}}(Y;\C)\neq H_1^{\mathrm{prim}}(Y;\C)=H^1(Y;\C)$. 
\end{proposition}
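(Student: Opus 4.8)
The plan is to reduce the ``In particular'' clause to constructing a single surjection $\phi:\pi_1(\Sigma_{1,2})\to G$ satisfying (i)--(iii), and then to read off the homological consequences from the machinery already developed. Indeed, $\Sigma_{1,2}$ is homotopy equivalent to a wedge of three circles, so $\pi_1(\Sigma_{1,2})\cong F_3$ is free and the associated cover $Y$ is $G$--equivariantly homotopy equivalent to a graph cover. Once (iii) provides a primitive element in $\ker\phi$, Lemma~\ref{lem:regular-intermediate} gives $H_1^{\mathrm{prim}}(Y;\C)=H_1(Y;\C)$. For the simple homology, property (i) supplies an irreducible representation $V$ on which $G$ acts freely, so that a nonidentity element of $G$ never fixes a nonzero vector; combined with (ii) this forces $V\notin\Irrscc(\phi,G)$. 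Running the independence--of--elevations argument of Proposition~\ref{prop:rep-lift-of-prim} for disjoint simple closed curves (exactly as in the proof of Theorem~\ref{theorem:restriction}, and as recorded for surfaces in Theorem~\ref{thm:characterize-scc}) then shows that $V$ does not occur in $H_1^{\mathrm{scc}}(Y;\C)$, whereas Chevalley--Weil guarantees $V\subseteq H_1(Y;\C)$. Hence $H_1^{\mathrm{scc}}(Y;\C)\neq H_1^{\mathrm{prim}}(Y;\C)=H_1(Y;\C)$.

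For the construction I would start from the rank two example: let $G=\langle a,b\mid a^3,b^8,bab^{-1}=a^2\rangle$ and let $\psi:F_2=\langle a,b\rangle\to G$ be the surjection of Proposition~\ref{prop:examples-primitive}, which acts freely and linearly on a sphere, has no primitive of $F_2$ in its kernel, and satisfies $[\psi(a),\psi(b)]\neq1$ (since $b$ does not centralize $a$). I would present $\pi_1(\Sigma_{1,2})=\langle a,b,\partial_1\rangle$ with the second boundary loop $\partial_2=\partial_1^{-1}[a,b]^{-1}$, and define $\phi$ by $\phi|_{\langle a,b\rangle}=\psi$ and $\phi(\partial_1)=\psi(w)^{-1}$ for a suitable word $w$ in $a,b$. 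Then $\phi$ is surjective, property (i) is inherited from $G$, and $\partial_1 w$ is a primitive element of $F_3$ (a single Nielsen move off the standard basis) lying in $\ker\phi$, giving (iii). It remains to choose $w$ so that (ii) holds.

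The essential simple closed curves on $\Sigma_{1,2}$ fall into four $\Mod(\Sigma_{1,2})$--classes: the two peripheral curves $\partial_1,\partial_2$; the separating curve encircling both punctures, with conjugacy class $[a,b]$; and the nonseparating curves. For the peripheral curves one only needs $\phi(\partial_1)=\psi(w)^{-1}\neq1$ and $\phi(\partial_2)=\psi(w)[\psi(a),\psi(b)]^{-1}\neq1$, i.e. $\psi(w)\notin\{1,[\psi(a),\psi(b)]\}$; for the standard separating curve one needs $\phi([a,b])=[\psi(a),\psi(b)]\neq1$, which already holds. The real work is to rule out \emph{all} curves in the (infinite) separating and nonseparating orbits.

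The main obstacle, then, is controlling the infinitely many essential simple closed curves at once. I would handle this exactly as primitives were handled in Proposition~\ref{prop:alg}: the set $\{\phi(\gamma):\gamma\text{ a simple closed curve}\}$ is a \emph{finite} subset of $G$, because the simple closed curves of a fixed topological type form a single $\Mod(\Sigma_{1,2})$--orbit and $\Mod(\Sigma_{1,2})$ is finitely generated (by Dehn twists together with a half--twist swapping the punctures). Starting from one representative of each type and applying the finitely many generators, while tracking the resulting elements of $G$ up to conjugacy, the orbit of images stabilizes after finitely many steps since $G$ is finite; one then checks that the identity never appears. Thus (ii) is verified by a terminating computation, and a choice of $w$ making it succeed can be found by a computer search, just as in Proposition~\ref{prop:examples-primitive}. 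This finite enumeration of simple--closed--curve images --- the surface analogue of the Nielsen--move enumeration of primitive images --- is the crux of the argument.
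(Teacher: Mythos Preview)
Your deduction of the ``in particular'' clause and your plan to verify (ii) by enumerating the finite set of $\phi$--images of simple closed curves via $\Mod(\Sigma_{1,2})$--orbits are both correct and are exactly what the paper does. (For the obstruction you only need that the $G$--span of an elevation is a \emph{quotient} of $\mathrm{Ind}_{G_\gamma}^G\C_{\mathrm{triv}}$, so the independence step is not even required; this also handles the separating curves, which are not primitive in $F_3$.)

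Where you diverge from the paper is in the construction of $\phi$. You propose to recycle the group $\Z/3\rtimes\Z/8$ of Proposition~\ref{prop:examples-primitive} and search over words $w$ for a suitable extension to $F_3$. The paper instead passes to a \emph{different} order--$24$ group acting freely and linearly on a sphere,
\[
G=\langle a,b,c\mid a^3=1,\ b^4=1,\ bab^{-1}=a^2,\ c^2=b^2,\ cac^{-1}=a,\ cbc^{-1}=b^3\rangle
\]
(type~ii) in \cite{N}), and takes an explicit map: choose a basis $a,b,c$ of $\pi_1(\Sigma_{1,2})$ consisting of simple closed curves with $a$ peripheral and $b,c$ spanning $H_1$, and send generators to generators. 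Properties (i)--(iii) are then asserted to hold by the same algorithmic check you describe.

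So the machinery in your proposal is identical to the paper's; what is missing is an actual example. You assert that ``a choice of $w$ making it succeed can be found by a computer search,'' but for the group $\Z/3\rtimes\Z/8$ you offer no evidence that such a $w$ exists, and the fact that the authors switched to a different group may or may not be significant. Since the proposition is precisely an existence claim, exhibiting a verified $(G,\phi)$ is the whole content; your plan is sound but stops one step short of that.
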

On the contrary, already for closed genus $2$ surfaces we were unable to find
with computer experiments an example with $\Irrscc(\phi,G)\neq\Irr(G)$. Note that
in contrast to the primitive case it is not clear if this condition suffices to ensure
that $H_1^{\mathrm{scc}}(Y;\C)=H^1(Y;\C)$. 

\begin{proof}[Proof of Theorem~\ref{thm:characterize-scc}]
We follow with the same argument as in
Section~\ref{sec:primitive}, with the following lemma
replacing Step 1b.
\begin{lemma}
  Let $F$ be a closed surface with no boundary components.
  Let $\hat{\alpha}_1, \ldots, \hat{\alpha}_n$ be disjoint pairwise
  nonisotopic simple closed curves on $F$. Let $R_1, \ldots, R_m$ be
  the set of subsurfaces bounded by $\hat{\alpha}_1 \cup \ldots \cup
  \hat{\alpha}_n$. We have an exact sequence
  \[ 
  0 \to \C\left[\sum \partial R_j\right] \to \bigoplus_{j=1}^m
  \C\left[\partial R_j\right] \to
  \bigoplus_{i=1}^n\C\left[\hat{\alpha}_i\right] \to
  \mathrm{Span}_{H_1(X)}\left\{ \hat{\alpha}_1, \ldots, \hat{\alpha}_n \right\}
  \to 0
  \]
  where the maps are induced by the natural identifications of
  multicurves in the various direct sums.
\end{lemma}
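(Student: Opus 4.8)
The plan is to interpret the claimed exact sequence as the cellular chain complex associated to the decomposition of the closed surface $F$ into the pieces cut out by the multicurve $\hat\alpha_1\cup\cdots\cup\hat\alpha_n$, and then to identify its homology with the span in question. I would set up the combinatorics explicitly: cutting $F$ along the curves $\hat\alpha_i$ produces the subsurfaces $R_1,\ldots,R_m$, and each $R_j$ has boundary $\partial R_j$ expressible as a signed sum of the $\hat\alpha_i$ (each interior curve appears in exactly two boundaries, with opposite orientations, while any curve in $\partial F$ does not occur since $F$ is closed). The three nontrivial maps in the sequence are then the obvious ones: the leftmost sends the single class $\sum\partial R_j$ into the direct sum $\bigoplus_j\C[\partial R_j]$ diagonally; the middle map $\bigoplus_j\C[\partial R_j]\to\bigoplus_i\C[\hat\alpha_i]$ records how each boundary $\partial R_j$ decomposes as a combination of the $\hat\alpha_i$; and the last map sends each generator $[\hat\alpha_i]$ to its actual homology class, whose image is by definition the span $\mathrm{Span}_{H_1(X)}\{\hat\alpha_1,\ldots,\hat\alpha_n\}$.

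I would then verify exactness at each of the four spots. Surjectivity at the right end is immediate from the definition of the span. Exactness at $\bigoplus_i\C[\hat\alpha_i]$ amounts to the statement that a formal combination $\sum a_i[\hat\alpha_i]$ is null-homologous in $F$ exactly when it arises as a boundary combination of the $R_j$; this is the closed-surface analogue of the separation argument carried out in Step 2 of Section~\ref{sec:primitive}, and I would argue it by the same inductive peeling of complementary regions, using that a relation among the $[\hat\alpha_i]$ forces the curves to separate and hence to bound a union of the $R_j$. Exactness at $\bigoplus_j\C[\partial R_j]$ is the assertion that the only relations among the boundary classes $\partial R_j$ (viewed in $\bigoplus_i\C[\hat\alpha_i]$) come from the global relation $\sum_j\partial R_j=0$; this holds because each interior curve $\hat\alpha_i$ lies in precisely two of the $R_j$ with cancelling signs, so the incidence matrix between regions and curves has kernel spanned by the all-ones vector. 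Finally, injectivity of the leftmost map is clear.

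The main obstacle, and the step deserving the most care, is the exactness at $\bigoplus_i\C[\hat\alpha_i]$, i.e. the precise identification of the module of homology relations among the curve classes with the image of the boundary map. In the bordered case of Step 2 this was handled by pushing relations onto the distinguished boundary multicurves $\delta_i$ and peeling off regions one at a time; here, with $F$ closed, there are no boundary components to absorb the relations, so instead the relations are exactly accounted for by the $\partial R_j$ themselves, and one must check that no further relations appear. I would establish this by the same inductive separation argument, organizing the bookkeeping so that each elimination of a curve $\hat\alpha_i$ that lies between two distinct regions replaces it, via the corresponding $\partial R_j$, until the remaining curves are nonseparating and hence independent; the failure of independence at any earlier stage is precisely recorded by a boundary class, which is what exactness demands. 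The remaining three exactness checks are linear-algebraic and I would treat them as routine.
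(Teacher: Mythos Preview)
Your proposal is correct and follows essentially the same route as the paper: both verify exactness at each of the four spots, use the inductive ``peeling off regions'' argument from Step~2 of Proposition~\ref{prop:rep-lift-of-prim} for the key exactness at $\bigoplus_i\C[\hat\alpha_i]$, and use the observation that each $\hat\alpha_i$ appears exactly twice with opposite signs among the $\partial R_j$ for exactness at $\bigoplus_j\C[\partial R_j]$. Your framing via the cellular chain complex of the decomposition and the incidence-matrix language for the kernel are a pleasant conceptual gloss, but the underlying argument is the same as the paper's (note that your ``kernel spanned by the all-ones vector'' claim implicitly uses connectedness of $F$, which you should make explicit).
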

\begin{proof}
  We check exactness at the various places individually. The
  surjectivity of $\phi:\bigoplus_{i=1}^n\C\left[\hat{\alpha}_i\right] \to
  \mathrm{Span}_{H_1(X;\C)}\left\{ \hat{\alpha}_1, \ldots, \hat{\alpha}_n
  \right\}$ is clear by definition.

  Next, consider the image of $\psi:\bigoplus_{j=1}^m
  \C\left[\partial R_j\right] \to
  \bigoplus_{i=1}^n\C\left[\hat{\alpha}_i\right]$. The fact that the
  image of $\psi$ lies in the kernel of $\phi$ follows since each
  generator $\partial R_j$ of $\C\left[\partial R_j\right]$ is mapped
  to a boundary by definition. 

  To see that the image of $\psi$ is the complete kernel of $\phi$,
  suppose that some linear combination of the $\hat{\alpha}_i$ is zero
  in homology:
  $$a_1[\hat{\alpha}_1] + \ldots + a_n[\hat{\alpha}_n] = 0$$
  First note that if the multicurve $\hat{\alpha}_1 \cup \ldots \cup
  \hat{\alpha}_n$ is nonseparating, then all $a_i$ are zero, and there
  is nothing to show. 

  If not, then our goal is to use the relations $\psi(\partial R_j)$
  to modify the relation so that the involved curves become
  nonseparating.  To this end, consider the subsurface $R_1$. We can
  assume that there is a curve $\hat{\alpha}_i$ which appears only
  once in the boundary of $R_1$; if there is no such curve, then $R_1$
  itself is the complement of $\hat{\alpha}_1 \cup \ldots \cup
  \hat{\alpha}_n$ and the latter is nonseparating.

  Let now $R_2$ be the subsurface on the other side of
  $\hat{\alpha}_i$.  Using the element $\psi(\partial R_2)$ to modify
  the linear combination, we can change the coefficient $a_i$
  of $\hat{\alpha}_i$ to be $0$. We interpret the resulting linear
  combination as one involving only the curves $\hat{\alpha}_i, j \neq
  i$, where now $R_1 \cup R_2$ in place of $R_1$ is one of the complementary
  subsurfaces of the relation
  $a_2[\hat{\alpha}_2] + \ldots + a_n[\hat{\alpha}_n] = 0$. Induction
  on the complexity of $R_1$ now yields the claim.

  \smallskip
  Injectivity of $\rho:\C\left[\sum \partial R_j\right] \to \bigoplus_{j=1}^m
  \C\left[\partial R_j\right]$ is clear. 
  The fact that the image of $\rho$ is exactly the kernel of $\psi$
  follows since each $\hat{\alpha}_i$ appears exactly twice amongst
  the curves appearing in $\partial R_j$ (with opposite signs): 
  on the one hand this immediately implies that the sum of the $\partial R_j$ maps to
  $0$ under $\psi$. On the other hand, consider any linear combination
  of a proper subset of the $\partial R_j$. In that case, there is at
  least one $\hat{\alpha}_i$ which appears in exactly one of the
  $\partial R_j$ of the subset. For the combination to be in the kernel of $\psi$
  the coefficient of that $\partial R_j$ needs to be $0$. Inductively
  it now follows that the combination is trivial.
\end{proof}

This completes the proof of the theorem.
\end{proof}

\subsection{Algorithms}
It is a basic fact in surface topology that every nonseparating simple closed curve is 
the image of the first standard generator under a suitable element of the mapping class
group. Since the mapping class group is generated by finitely many (explicit) elements,
one can adapt the algorithm given in Section~\ref{sec:algorithms} to yield the following.

\begin{proposition}
  Given a homomorphism $q:\pi_1(\Sigma) \to G$ of the fundamental group of a surface to
  a finite group $G$, there is an algorithm which computes the set of all elements in $G$ which 
  are the image of elements representable by nonseparating simple closed curves.
\end{proposition}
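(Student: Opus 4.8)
The plan is to run the same saturation procedure used in the proof of Proposition~\ref{prop:alg}, with the mapping class group $\Mod(\Sigma)$ and its action on $\pi_1(\Sigma)$ playing the role that $\Aut(F_n)$ and the Nielsen moves played there. Two facts drive the argument: first, every nonseparating simple closed curve is the image of the first standard generator $a_1$ under some $\varphi\in\Mod(\Sigma)$ (the change-of-coordinates principle already invoked above); second, $\Mod(\Sigma)$ is generated by finitely many explicit mapping classes, for instance the Humphries or Lickorish Dehn twists, each of which acts on $\pi_1(\Sigma)$ by an explicitly computable automorphism sending the standard generators to explicit words.

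First I would fix a standard generating tuple $a_1,b_1,\ldots,a_g,b_g$ for $\pi_1(\Sigma)$ and record $q$ by the tuple of images $(q(a_1),q(b_1),\ldots,q(a_g),q(b_g))\in G^{2g}$. Each generator $\tau$ of $\Mod(\Sigma)$ acts as an automorphism $\tau_*$ of $\pi_1(\Sigma)$ sending each standard generator to an explicit word, so one can evaluate $q\circ\tau_*$ on the generators purely from the stored tuple, producing a new tuple in $G^{2g}$. I would then saturate: starting from the initial tuple, repeatedly apply the finitely many generators of $\Mod(\Sigma)$ and their inverses, adding any newly produced tuple to a list $L$, and stop once $L$ stabilizes. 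Termination is immediate because $G$ is finite, so $G^{2g}$ is a finite set and $L$ can grow only finitely often. For correctness, every $\varphi\in\Mod(\Sigma)$ is a product of the chosen generators, so the tuple $(q(\varphi_*(a_1)),\ldots)$ is reached by the saturation; reading off the first entry of every tuple in $L$ therefore yields exactly $\{q(\varphi_*(a_1)):\varphi\in\Mod(\Sigma)\}$, which by the change-of-coordinates principle is precisely the set of images of elements representable by nonseparating simple closed curves.

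The main subtlety, and the only genuine difference from the free-group case, is the bookkeeping around the basepoint. For a closed surface $\Mod(\Sigma)$ acts on $\pi_1(\Sigma)$ only through $\Out(\pi_1(\Sigma))$, so the procedure naturally computes the answer up to conjugacy in $G$; since the set of images of a fixed free homotopy class is a single conjugacy class in $G$ and the set in question is conjugation-closed, one recovers the full set by taking the conjugation-closure in $G$. Equivalently, one may work on a surface with one marked point or boundary component, where the mapping class group acts by honest automorphisms and point-pushing supplies the needed inner automorphisms. Writing down the explicit word-action of each Dehn twist generator on the standard generators is classical and is the only input that must be supplied by hand; everything else is the same finite breadth-first search as in Proposition~\ref{prop:alg}.
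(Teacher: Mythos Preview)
Your proposal is correct and follows exactly the approach the paper intends: the paper's entire argument is the one-sentence remark that one ``can adapt the algorithm given in Section~\ref{sec:algorithms}'' using that $\Mod(\Sigma)$ is finitely generated and acts transitively on nonseparating simple closed curves. You have in fact supplied more detail than the paper does, including the basepoint/conjugacy bookkeeping that the paper leaves implicit.
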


Let $\Sigma_{1,2}$ be a torus with two marked points. Define a group $G$ via the relations
\[ G = \langle a, b, c | a^3=1, b^4=1, bab^{-1}=a^2, c^2=b^2,
cac^{-1}=a, cbc^{-1}=b^3 \rangle \] 
This group appears as Type ii), with parameters $n = 4, m = 3, r = 2, l = 1, k = 3$
in the list in \cite{N}, and hence acts linearly and freely on some sphere.

We choose a basis $a,b,c$ of
$\pi_1{\Sigma_{1,2}}$ of simple closed curves, where $a$ surrounds one
of the punctures, and $b,c$ generate $H_1(\Sigma_{1,2})$. Let
$q:\pi_1(\Sigma_{1,2}) \to G$ be the obvious map induced by the labels.

Using the algorithms described, one can verify:
\begin{enumerate}[i)]
  \item The group $G$ acts freely and linearly on a sphere $S^N$.
  \item No element of $\pi_1(\Sigma_{1,2})$ representing a simple closed curve
    is contained in the kernel of $q$.
  \item There are primitive elements contained in the kernel of $q$.
\end{enumerate}
  
In particular, for the associated cover we have 
\[H_1^{\mathrm{scc}}(Y;\C)\neq H_1^{\mathrm{prim}}(Y;\C)=H^1(Y;\C)\]
This shows Proposition~\ref{prop:torus-scc-prim}.

\small

\begin{tabular}{ll}
Benson Farb & Sebastian Hensel\\
Department of Mathematics & Mathematisches Institut\\ 
University of Chicago & Rheinische Friedrich-Wilhelms-Universit\"at Bonn\\
5734 University Ave. & Endenicher Allee 60\\
Chicago, IL 60637 & 53115 Bonn, Germany\\
E-mail: farb@math.uchicago.edu & E-mail: hensel@math.uni-bonn.de
\end{tabular}
\end{document}